\definecolor{vert}{rgb}{0,0.6,0}
\definecolor{vert}{rgb}{0,0.6,0}
\numberwithin{figure}{section}
\theoremstyle{plain}
\newtheorem{thm}{Theorem}[section]
\newtheorem{defn}{Definition}
\newtheorem{lem}[thm]{Lemma}
\newtheorem{cor}[thm]{Corollary}
\newtheorem{prop}[thm]{Proposition}
\theoremstyle{remark}
\newtheorem{rem}{\bf{Remark}}
\numberwithin{equation}{section}
\newcommand{\red}{\color{red}}
\newcommand{\N}{\mathbb{N}}
\newcommand{\R}{\mathbb{R}}
\newcommand{\T}{\mathbb{T}}
\newcommand{\Z}{\mathbb{Z}}
\newcommand{\tm}{\widetilde{m}}
\newcommand{\fep}{f_\varepsilon}
\newcommand{\Lbar}{\overline{L}}
\newcommand{\del}{\delta}
\newcommand{\ep}{\varepsilon}
\newcommand{\Om}{\Omega}
\newcommand{\ol}{\overline}
\def\namedlabel#1#2{\begingroup
    #2%
    \def\@currentlabel{#2}%
    \phantomsection\label{#1}\endgroup
}
\begin{document}

\title[Homogenization on perforated domains with Dirichlet conditions]
{Quantitative homogenization of Hamilton--Jacobi equations on perforated domains with Dirichlet boundary conditions}

\author[Y. Han, S. Tu] 
{Yuxi Han and Son Tu}

\thanks{
}

\address[Y. Han]
{Department of Mathematics, Purdue University, West Lafayette, IN}
\email{han891@purdue.edu}

\address[S. Tu]
{Department of Mathematics, Baylor University, Waco, TX
}
\email{son\_tu@baylor.edu}

\date{\today}
\keywords{Cell problems; domain defects; first-order convex Hamilton--Jacobi equations; periodic homogenization; perforated domains; optimal convergence rate; Dirichlet boundary conditions; viscosity solutions}
\subjclass[2010]{
35B10, 
35B27, 
35B40, 
35F21, 
49L25 
}

\maketitle

\begin{abstract}
We study the periodic homogenization of convex Hamilton–Jacobi equations on perforated domains with Dirichlet boundary conditions. By analyzing the optimal control representation of the solutions and the properties of the metric function associated with the running cost, we establish the optimal convergence rate $\mathcal{O}(\varepsilon)$ for homogenization. A key aspect of our approach is the treatment of the singularity that arises when the optimal path does not fully utilize the available time.
\end{abstract}


\section{Introduction}
We study the following homogenization problem on perforated domains with Dirichlet boundary conditions. Consider an open and connected set $\Omega \subset \mathbb{R}^n$ with $C^1$ boundary and assume $\Omega$ is $\mathbb{Z}^n$-periodic, which means it satisfies the condition $\Omega+ \mathbb{Z}^n = \Omega$. Note that for $\Om$ to be connected, we need $n\geq 2$ (see \cite[Figures~1.1--1.3]{han_quantitative_2024} for examples of domains \(\Omega\)). For $\varepsilon >0$, consider the domain $\Omega_\varepsilon: = \varepsilon \Omega$ and let $u^\varepsilon$ be the unique viscosity solution to the Cauchy problem with a Dirichlet boundary condition 
\begin{equation}\label{eqn:PDE_epsilon}
    \left\{\begin{aligned}
    u_t^\varepsilon+H\left(\frac{x}{\varepsilon}, Du^\varepsilon\right) & = 0 \qquad \qquad \qquad \text{in } \Omega_\varepsilon \times (0, \infty),\\
    u^\varepsilon (x, t) & = b\left(x, \frac{x}{\varepsilon}\right) \qquad \, \,\,\, \text{on } \partial \Omega_\varepsilon \times (0, \infty), \\
    u^\varepsilon(x,0) & =g(x)  \qquad \qquad \, \,\, \text{on } \overline{\Omega}_\varepsilon \times \{t=0\}, 
    \end{aligned}
    \right.
\end{equation}
where $H$ is a given Hamiltonian, $b \in \mathrm{BUC}\left(\mathbb{R}^n\times \partial \Omega \right)$ prescribes the boundary condition for $t>0$, and $g$ is the initial condition. For each $x \in \mathbb{R}^n$, the map $y \mapsto b(x,y)$ is $\mathbb{Z}^n$-periodic; that is, for all $(x,y) \in \mathbb{R}^n \times \partial \Omega$ and $z \in \mathbb{Z}^n$,
\[
b(x, y+z)=b(x, y).
\]
By homogenization, we expect that, under suitable assumptions, as $\varepsilon \to 0^+$, $u^\varepsilon$ converges locally uniformly on $\overline{\Omega}_\varepsilon \times [0,\infty)$ to $u$, the unique viscosity solution of the following equation (see \cite[Theorem~1.2]{Horie1998Homogenization})
\begin{equation}\label{eqn:PDE_limit}
    \left\{\begin{aligned}
        \max\{u-\overline{b},    u_t+\overline{H} \left(Du\right)\} & = 0 \qquad \quad \text{in } \mathbb{R}^n \times (0, \infty), \\
    u(x, 0)&=g(x) \, \, \,\quad \text{on } \mathbb{R}^n \times \left\{t=0\right\}, 
    \end{aligned}
    \right. 
\end{equation}
where $\overline{H}$ is the effective Hamiltonian associated with the state-constraint cell problem on $\ol \Omega$, and $\overline{b}: \R^n \to \R$ is defined by 
\[
\overline{b}(x):= \min_{z \in\partial \Omega} b(x,z).
\]
More precisely, for each $p \in \mathbb{R}^n$, $\overline{H}(p) = \overline{H}_\Omega(p)$ is the unique constant for which the following state-constraint (see \cite{Capuzzo-Dolcetta1990HamiltonJacobiConstraints, Horie1998Homogenization, mitake_asymptotic_2008}) cell problem admits a $\mathbb{Z}^n$-periodic viscosity solution $v$:
\begin{equation}\label{eqn:cell}
    \left\{\begin{aligned}
    H\left(y, p+Dv(y)\right) &\leq \overline{H}_\Omega(p) \quad \text{in }  \Omega, \\
    H\left(y, p+Dv(y)\right) &\geq \overline{H}_\Omega(p) \quad \text{on }  \overline{\Omega}.
    \end{aligned}
    \right.
\end{equation}
We write $\overline{H}=\overline{H}_\Omega$ when it is clear from context.
We are interested in how fast $u^\varepsilon$ converges to $u$.

\subsection{Relevant Literature} 

The study of homogenization for first-order Hamilton--Jacobi equations in the periodic setting was initiated in \cite{lions_homogenization_1986}. Subsequent works investigated the qualitative homogenization of Hamilton--Jacobi equations on perforated domains \cite{Horie1998Homogenization,alvarez_homogenization_1999,alvarez_hamilton-jacobi_2001}. For qualitative studies on state-constraint and Dirichlet problems, we refer to \cite{Capuzzo-Dolcetta1990HamiltonJacobiConstraints, Horie1998Homogenization, mitake_asymptotic_2008, Soner1986OptimalI, mitake_large_2009, ishii_vanishing_2017}.

Quantitative homogenization for first-order Hamilton--Jacobi equations in periodic setting was first studied in \cite{capuzzo-dolcetta_rate_2001}, where a rate of $\mathcal{O}(\varepsilon^{1/3})$ is provided for general coercive Hamiltonians.
A rate of $\mathcal{O}(\varepsilon)$ was later achieved in \cite{MitakeTranYu2019}, and the optimal rate of convergence was established in \cite{tran_optimal_2021}. Since then, there has been substantial progress in this direction; see \cite{jing_effective_2020, MitakeTranYu2019, tu_rate_2021, tran_hamilton-jacobi_2021, tran_yu_2022_weak_kam, hu_polynomial_2025, mitake_rate_2023} and the references therein. In particular, the framework of \cite{tran_optimal_2021} is robust and has been extended to various settings, including multiscale problems \cite{han_rate_2023}, spatio-temporal problems \cite{nguyen-tien_optimal_2024}, weakly coupled systems \cite{mitake_rate_2025}, perforated domains with state-constraint boundary conditions \cite{han_quantitative_2024}, perforated domains with Neumann boundary conditions \cite{mitake_quantitative_2025_Neumann}, and fully nonlinear problems \cite{mitake_quantitative_2025_contact}.

Our work is motivated by \cite{han_quantitative_2024}. To the best of our knowledge, the rate of convergence on perforated domains with Dirichlet boundary conditions has not yet been established in the literature. In addition, we provide an optimal control representation formula for the solution $u$ to the limiting equation \eqref{eqn:PDE_limit}, which appears to be new.



\subsection{Settings}

The following assumptions will be used throughout this paper. Most theorems rely on \ref{itm:A1}-\ref{itm:A5}, while the main result requires \ref{itm:A1}-\ref{itm:A6}.

\begin{itemize}
    \item[\namedlabel{itm:A1}{$(\mathcal{A}_1)$}] $H\in C(\R^n\times \R^n)$; and for $p\in \R^n$, $y\mapsto H(y,p)$ is $\Z^n$-periodic.
    \item[\namedlabel{itm:A2}{$(\mathcal{A}_2)$}] $\lim_{\left|p\right| \to \infty}\left(\inf_{y \in \R^n}H\left(y,p\right)\right) = +\infty$.
    \item[\namedlabel{itm:A3}{$(\mathcal{A}_3)$}] For each $y\in \R^n$, the map $p \mapsto H(y, p)$ is convex.
    \item[\namedlabel{itm:A4}{$(\mathcal{A}_4)$}] $b \in \mathrm{BUC}\left(\mathbb{R}^n\times \partial \Omega \right)$, and the function $y\mapsto b(x, y)$ is periodic with $\Z^n$ for every $x$. That is, if $(x, y) \in \R^n \times \partial\Omega$ and $z \in \Z^n$, then
    \[
    b(x, y+z)=b(x, y).
    \]
    \item[\namedlabel{itm:A5}{$(\mathcal{A}_5)$}] $g \in \mathrm{BUC}(\mathbb{R}^n) \cap \mathrm{Lip} (\mathbb{R}^n)$, and $g(x)\leq \overline{b}(x)$ for any $x\in \mathbb{R}^n$, where $\overline{b}: \R^n \to \R$ is defined by 
    \[
    \overline{b}(x):= \min_{z \in\partial \Omega} b(x,z).
    \]
    \item[\namedlabel{itm:A6}{$(\mathcal{A}_6)$}]
     $b \in \mathrm{Lip} \left(\mathbb{R}^n\times \partial \Omega\right)$.
\end{itemize}

\subsection{Simplifications}
We recall some standard simplifications (see \cite{han_quantitative_2024,tran_optimal_2021}) and necessary estimates.

Let $\T^n=\R^n/\Z^n$ be the usual flat $n$ dimensional torus. 
Then, we can also write $H \in C(\T^n \times \R^n)$. Although \eqref{eqn:PDE_epsilon} and \eqref{eqn:cell} only require $H$ to be defined on $\ol{\Omega}\times \R^n$, it is more convenient for us to consider $H$ on $\R^n \times \R^n$. Note that \ref{itm:A1} and \ref{itm:A2} imply that there exists a constant $C_1>0$ so that
\begin{equation}\label{eq:H-lower-bd}
H(y, p) \geq -C_1 \qquad \text{ for all } (y, p) \in \R^n \times \mathbb{R}^n.
\end{equation}
The well-posedness of \eqref{eqn:PDE_epsilon} has been well studied in the literature (see \cite{{Ishii1989_dirichletHJ}}). 
Furthermore, one can show that the solution $u^\varepsilon$ is globally Lipschitz, that is, for $\ep\in (0,1)$,
\begin{equation}\label{eqn:u_prior}
	\left\|u^\varepsilon_t\right\|_{L^\infty\left(\overline{\Omega} \times [0, \infty)\right)}+\left\|Du^\varepsilon\right\|_{L^\infty(\overline{\Omega} \times [0, \infty))} \leq C_0,
\end{equation}
where $C_0>0$ is a constant that depends only on $H$ and $\left\|Dg\right\|_{L^\infty(\mathbb{R}^n)}$.
Indeed, by \eqref{eq:H-lower-bd}, the function $g(x)+C_{1}t$ is a supersolution of \eqref{eqn:PDE_epsilon}, with the boundary condition interpreted in the generalized sense (see \cite{Ishii1989_dirichletHJ}). 
Moreover, assumptions \ref{itm:A1} and \ref{itm:A5} imply that, for 
\begin{equation*}
    C = C\!\left(H,\|Dg\|_{L^\infty(\R^n)}\right)>0    
\end{equation*}
sufficiently large, the function $g(x)-Ct$ is a subsolution of \eqref{eqn:PDE_epsilon}. 
Applying the comparison principle (with generalized boundary conditions, \cite[Theorem~2.1]{Ishii1989_dirichletHJ}, suitably adapted to the unbounded domain and time-dependent setting), we obtain
\[
g(x)-Ct \;\leq\; u^\ep(x,t) \;\leq\; g(x)+C_{1}t, 
\qquad (x,t) \in \overline{\Omega} \times [0,\infty).
\]
Hence, $\|u^\ep_t(\cdot,0)\|_{L^\infty(\ol \Om)} \leq C$.
By using the comparison principle once more, we get 
\begin{equation*}
	\|u^\ep_t\|_{L^\infty(\ol \Om\times[0,\infty))} \leq C. 
\end{equation*}
Finally, we use this bound, \eqref{eqn:PDE_epsilon}, and \ref{itm:A2} to obtain \eqref{eqn:u_prior}.

Based on \eqref{eqn:u_prior}, we can modify $H(y, p)$ for $|p| > 2C_0+1$ without changing the solutions to \eqref{eqn:PDE_epsilon} and ensure that
\begin{equation}\label{eqn:K_0H}
    \frac{|p|^2}{2}-K_0 \leq H(y, p) \leq \frac{|p|^2}{2}+K_0, \qquad (y,p) \in \R^n\times \mathbb{R}^n, 
\end{equation}
for some constant $K_0 >0$ that depends only on $H$ and $\left\|Dg\right\|_{L^\infty(\mathbb{R}^n)}$. Consequently, 
\begin{equation}\label{eqn:K_0L}
\frac{|v|^2}{2}-K_0 \leq L(y, v) \leq \frac{|v|^2}{2}+K_0,\qquad (y,v)  \in \R^n \times \mathbb{R}^n,
\end{equation}
where $L: \R^n \times \mathbb{R}^n \to \mathbb{R
}$ is the Legendre transform of $H$, defined by
\begin{align}\label{eq:L}
    L(x,v) = \sup_{p\in \R^n} \left(p\cdot v - H(x,p)\right), \qquad (x,v)\in \R^n\times \R^n. 
\end{align}
Under \eqref{eqn:K_0H}, \eqref{eqn:K_0L}, we obtain
\begin{equation}\label{eq:Hbar-Lbar}
    \frac{|p|^2}{2}-K_0 \leq \ol H(p) \leq \frac{|p|^2}{2}+K_0,\qquad
    \frac{|v|^2}{2}-K_0 \leq \ol L(v) \leq \frac{|v|^2}{2}+K_0
\end{equation}
for all $p,v \in \R^n$, where $\overline{L}$ is the Legendre transform of $\overline{H}$ defined in \eqref{eqn:cell}.
We always assume \eqref{eqn:K_0H}, \eqref{eqn:K_0L}, and \eqref{eq:Hbar-Lbar} in our analysis from now on.
\medskip 

\subsection{Main results}
Our main result establishes the convergence rate $\mathcal{O}(\varepsilon)$ for $u^\varepsilon \to u$, where $u^\varepsilon$ and $u$ are solutions to \eqref{eqn:PDE_epsilon} and \eqref{eqn:PDE_limit}, respectively. We impose the following assumption on the domain:
\begin{itemize} \item[\namedlabel{itm:D1}{$(\mathcal{D}_1)$}] $\Omega \subset \mathbb{R}^n$ with $C^1$ boundary and assume $\Omega$ is $\mathbb{Z}^n$-periodic, that is, $\Omega+ \mathbb{Z}^n = \Omega$. \end{itemize}


\begin{thm}\label{thm:main1}
    Assume \ref{itm:D1}, \ref{itm:A1}--\ref{itm:A6}. For $\varepsilon>0$, let $u^\varepsilon$ be the viscosity solution to \eqref{eqn:PDE_epsilon} and let $u$ be the viscosity solution to \eqref{eqn:PDE_limit}, respectively. Then we have
\begin{equation}
    \left\|u^\varepsilon- u\right\|_{L^\infty\left(\overline{\Omega}_\varepsilon \times [0, \infty)\right)} \leq C \varepsilon
\end{equation}
where $C>0$ is a constant depends on $n, \partial\Omega, H, \left\|Dg\right\|_{L^\infty\left(\mathbb{R}^n\right)}$ and $\left\|Db\right\|_{L^\infty \left(\mathbb{R}^n\times \partial \Om\right)}$. 
\end{thm}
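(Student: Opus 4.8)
The plan is to follow the optimal-control / metric-function strategy pioneered in \cite{tran_optimal_2021} and adapted to perforated domains in \cite{han_quantitative_2024}, but with the crucial new twist that the Dirichlet condition forces us to handle controlled paths that may \emph{reach} $\partial\Omega_\ep$ in less time than is available. First I would write down the Lax--Oleinik-type optimal control representation for $u^\ep$: for $(x,t)\in\ol\Omega_\ep\times(0,\infty)$,
\[
u^\ep(x,t)=\inf\left\{\int_s^t L\!\left(\frac{\gamma(\tau)}{\ep},\dot\gamma(\tau)\right)d\tau+\Phi^\ep(\gamma,s)\right\},
\]
where the infimum is over $s\in[0,t]$ and Lipschitz $\gamma:[s,t]\to\ol\Omega_\ep$ with $\gamma(t)=x$, the curve staying in $\ol\Omega_\ep$, and $\Phi^\ep(\gamma,s)=g(\gamma(0))$ if $s=0$ and the path never hits $\partial\Omega_\ep$ before time $0$, while $\Phi^\ep(\gamma,s)=b(\gamma(s),\gamma(s)/\ep)$ if $\gamma(s)\in\partial\Omega_\ep$ (the path "exits" onto the obstacle boundary at the first such time). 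The companion step is to establish, presumably as a separate lemma earlier in the paper, the analogous representation for the limit problem \eqref{eqn:PDE_limit}:
\[
u(x,t)=\inf\left\{t_1\,\ol L\!\left(\frac{x-y}{t_1}\right)+\min\{g(y),\ \ol b(\xi)\}\ \text{-type terms}\right\},
\]
the key feature being that the $\max$ with $\ol b$ in \eqref{eqn:PDE_limit} translates into the option of stopping the trajectory at any interior time and paying $\ol b$ there, which is exactly the "singularity when the optimal path does not fully utilize the available time" mentioned in the abstract.

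Next I would prove the two one-sided inequalities $u^\ep-u\le C\ep$ and $u-u^\ep\le C\ep$ separately by comparing these two representations. For each, take a near-optimal trajectory for one problem and build a competitor for the other. The metric-function machinery enters here: define $m(y_0,y_1)$ as the state-constraint minimal action on $\ol\Omega$ to travel from $y_0$ to $y_1$ (at the level $\ol H(0)$, or more precisely the family $m_p$ for slopes $p$ coming from quantifying $Du^\ep$), and use the quantitative cell-problem estimate — namely that $\ep\, m(\cdot/\ep,\cdot/\ep)$ approximates the effective length functional $\ol L$-action up to $O(\ep)$, uniformly, which is the perforated-domain analogue of the sub/super-corrector construction in \cite{han_quantitative_2024}. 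Concretely: given the straight-line (or $\ol L$-geodesic) optimal curve for $u$, one discretizes it at scale $\ep$, on each piece replaces it by a state-constraint-admissible curve in $\ol\Omega_\ep$ furnished by the metric function $m$, concatenates, and checks the accumulated cost differs from the homogenized cost by $O(\ep)$ per unit length, hence $O(\ep)$ total since $t$ and the path length are bounded by \eqref{eqn:u_prior}. The reverse direction discretizes the (near-)optimal $\ep$-trajectory, projects it to a macroscopic curve, and uses that each $\ep$-cell contributes at least the $\ol L$-cost minus $O(\ep^2)$ (i.e. $O(\ep)$ after summing $O(1/\ep)$ cells over bounded time).

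The main obstacle — and the genuinely new point compared to \cite{han_quantitative_2024} — is the termination/boundary term. In the perforated Dirichlet setting the competitor trajectory for $u^\ep$ that we build from an $\ol b$-terminating optimal curve for $u$ must actually \emph{touch} $\partial\Omega_\ep$ at a point $\xi$ with $b(\xi,\xi/\ep)$ close to $\ol b$ of the relevant macroscopic point, and crucially must do so \emph{at essentially the right time}: if the homogenized optimal path "stops early" at time $t_1<t$ and pays $\ol b$, the microscopic competitor may need extra time $O(\ep)$ to reach an actual boundary point where the microscopic cost $b(\cdot,\cdot/\ep)$ realizes (within $O(\ep)$, using \ref{itm:A6}) the minimum $\ol b$; this detour costs $O(\ep)$ in action by \eqref{eqn:K_0L} since velocities are $O(1)$, which is acceptable, but it requires a careful geometric lemma — using the $C^1$ regularity \ref{itm:D1} of $\partial\Omega$ and the $\Z^n$-periodicity — guaranteeing that from any point within $O(\ep)$ of a macroscopic location there is an admissible short path in $\ol\Omega_\ep$ to a boundary point attaining $\ol b$ up to $O(\ep)$. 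The reverse inequality has the dual subtlety: an $\ep$-optimal curve for $u^\ep$ that hits $\partial\Omega_\ep$ at some $\xi$ and pays $b(\xi,\xi/\ep)\ge\ol b(\xi)$ — here one simply uses $\ol b(\xi)\le b(\xi,\xi/\ep)$ and the Lipschitz bound on $\ol b$ (inherited from \ref{itm:A6}) to transfer the stopping to the macroscopic problem, plus the observation that if the $\ep$-curve instead runs all the way to $t=0$ without hitting the boundary then the $g$-term and the admissibility constraint $g\le\ol b$ in \ref{itm:A5} make the comparison with the interior part of the limit representation go through. Assembling both inequalities with the constant tracked through \eqref{eqn:K_0H}--\eqref{eq:Hbar-Lbar} and the Lipschitz data yields the stated $C\ep$ bound with $C$ depending only on $n,\partial\Omega,H,\|Dg\|_{L^\infty},\|Db\|_{L^\infty}$.
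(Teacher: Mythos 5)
Your outline tracks the paper's general architecture — optimal control representations for both $u^\ep$ and $u$, the state-constraint metric $m$ on $\overline\Omega$ together with the quantitative scaling estimate from \cite{han_quantitative_2024}, and attention to the boundary/exit cost — but it glosses over two ingredients that the paper's proof genuinely cannot do without, and I think if you tried to carry out the sketch you would get stuck at exactly these points.

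\textbf{The missing bridge: the Hopf--Lax formula must be re-expressed through the averaged metric.} Your plan compares the Hopf--Lax formula $u(x,t)=\inf_{\tau,y}\{(t-\tau)\overline L(\frac{x-y}{t-\tau})+f(y,\tau)\}$ directly with $u^\ep$ by discretizing the $\overline L$-geodesic and replacing each $\ep$-piece with a state-constraint admissible curve. But the quantitative cell estimate you invoke compares $\ep\,\tilde m(\tfrac{\tau}{\ep},\tfrac{t}{\ep},\tfrac{y}{\ep},\tfrac{x}{\ep})$ to the \emph{averaged metric} $\overline m^\ast(\tau,t,y,x)$, not to $(t-\tau)\overline L(\frac{x-y}{t-\tau})$. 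These are not a priori the same object. The paper introduces an intermediate value function $\overline u$ built from $\overline m^\ast$ and the boundary cost $f$, and establishes $\overline u = u$ as a separate lemma (Lemma~\ref{lem:ubarequ}). That proof is not a soft identity: it uses the qualitative homogenization theorem from \cite{Horie1998Homogenization} to pass to the limit, and it must control the stopping time $\tau$ precisely where the ``singularity'' lives. Without this bridge your concatenated competitor controls $u^\ep$ against $\overline m^\ast$-costs, not against the Hopf--Lax expression you start from, and the argument does not close.

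\textbf{Your singularity fix is aimed at the wrong obstruction.} You correctly flag that the $\max\{u-\overline b,\cdot\}$ structure lets the optimal path stop early, and you propose an $O(\ep)$ ``detour'' to reach a boundary point of $\partial\Omega_\ep$ where $b(\cdot,\cdot/\ep)$ realizes $\overline b$ up to $O(\ep)$. That resolves the \emph{spatial} boundary-matching issue (the paper does the same via a choice of $z_k\in\partial\Omega_\ep$ with $|z_k-y_k|\leq(M_0+\sqrt n)\ep$ and the Lipschitz bound on $b$). But the more dangerous issue is the \emph{temporal} one: the near-optimal stopping time $\tau$ can be arbitrarily close to $t$, so that $t-\tau<\ep$. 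In that regime the scaling estimate $|\overline m^\ast-\ep\,\tilde m^\ast|\leq C\ep$ of Proposition~\ref{prop:mbarrate} is simply unavailable (it requires $\ep\leq t-\tau$), there is no room to discretize at scale $\ep$, and the running cost $\tilde m$ may blow up because the admissible velocity is forced to be $O(1/(t-\tau))$. Your detour would make matters worse, not better, because it consumes time you do not have. The paper handles this regime by two complementary devices that your sketch does not supply: (i) a crude but uniform lower bound $\overline m^\ast(\tau,t,y,x)\geq -C\ep$ for $t-\tau<\ep$ (Lemma~\ref{lem:tmbstarbd}), paired with a constant-path competitor and the two-sided $K_0$-bounds on $L,\overline L$; and (ii) in the proof of $\overline u=u$, the lemmas (Lemma~\ref{lem:DPPHbar}(b), Lemma~\ref{lem:ueptimelbd}) showing that whenever $u(x,t)<\overline b(x)$ the near-optimal stopping time is \emph{uniformly bounded away from $t$}, so that for $\ep$ small one lands in the good regime $t-\tau\geq\ep$ after all. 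These time-uniformity lemmas are the actual mechanism that tames the singularity; the spatial detour is the easy part.

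So the plan is aimed in the right direction, and the one-sided case $u\leq u^\ep+C\ep$ would likely go through along your lines using $f\leq f_\ep$ and the $K_0$-bounds, but the other direction is not reachable without the $\overline u=u$ characterization and without an explicit mechanism for $t-\tau<\ep$.
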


We use the optimal control formula for $u^\varepsilon$ given in \eqref{eqn:ocfue} to establish the result. As in the state-constraint problem, admissible paths are restricted to $\overline{\Omega}$. The main difference is that, for $u^\varepsilon(x,t)$, the admissible path can choose whether to use all the available time or exit the domain at the boundary with a terminal cost. This flexibility in time length is the primary difficulty in proving the rate of convergence.

We also define a cost function $\tilde{m}$ based on the running cost of $u^\varepsilon$ and extend it to $\mathbb{R}^n$, analogously to $m$ in \cite{han_quantitative_2024}, but with an additional parameter $\tau$ representing the effective time $t-\tau$ (see \eqref{def:mtilde}). When $\tau$ approaches $t$, $\tilde{m}$ diverges. In \cite{han_quantitative_2024}, this singularity is irrelevant since, in the state-constraint problem, paths must use all the available time. There, for $t>\varepsilon$, the rate of convergence follows from the super- and subadditivity of the metric, while for $t<\varepsilon$, it is handled via the comparison principle.

In our setting, however, $\tau$ can be arbitrarily close to $t$ even when $t$ is large, so the singularity must be carefully controlled. The key observation is that a uniform bound can be established for $\tau$ away from $t$ when $u < \overline{b}$ (see Lemma \ref{lem:ueptimelbd}). Combined with a lower bound on the limiting metric (see Lemma \ref{lem:tmbstarbd}), this allows us to manage the singularity and obtain the rate of convergence.

\medskip

Next, we consider the dilute and defective domain cases for the Dirichlet problem following \cite{han_quantitative_2024}. The dilute setting \ref{itm:D2} occurs when $\mathbb{R}^n \setminus \overline{\Omega}_\varepsilon$ has no unbounded components and obstacles shrink faster than $\varepsilon$, while the defective domain \ref{itm:D3} involves non-diluted holes, some of which may be missing.
\begin{itemize} 
    \item[\namedlabel{itm:D2}{$(\mathcal{D}_2)$}] Let $D\subset\subset \left(-\frac{1}{2}, \frac{1}{2}\right)^n$ containing $0$ with connected $C^1$-boundary. 
    Let $\eta:\left[0,\frac{1}{2}\right)\to \left[0,\frac{1}{2}\right)$ be such that $\lim_{\varepsilon\to 0}\eta(\varepsilon) = 0$. We denote by $\Omega^{\eta(\varepsilon)} = \R^n\backslash \bigcup_{z\in \Z^n} \left( \eta(\varepsilon)\overline{D} + z\right)$ and 
    $\Omega_\varepsilon := \varepsilon \Omega^{\eta(\varepsilon)}$ for $\varepsilon \in [0,\tfrac{1}{2})$.
    
    \item[\namedlabel{itm:D3}{$(\mathcal{D}_3)$}] $\Omega$ satisfies \ref{itm:D2} with $\eta \equiv 1$. Let $I\subsetneq \Z^n$ with $I\neq \emptyset$ be an index set that denotes the places where the holes are missing. Define $W = \R^n\backslash \bigcup_{z\in \Z^n\backslash I} (\overline{D}+z)$ and $W_\varepsilon = \varepsilon W$. Assume that there exists a modulus $\omega$ such that 
    \[
    \frac{|I \cap [-k,k]^n|}{k} = \omega\!\left(\tfrac{1}{k}\right), \qquad k \to \infty.
    \]
\end{itemize}

Under the additional assumption \ref{itm:A7}, the results for both the dilute and defective domain cases of the Dirichlet problem follow from Theorem \ref{thm:main1} together with the state-constraint results \cite[Theorems 1.2 and 1.3]{han_quantitative_2024}.

\begin{prop}\label{prop:DilutedDefect} 
Assume \ref{itm:A1}--\ref{itm:A6}, together with the additional assumption:
\begin{itemize}
    \item[\namedlabel{itm:A7}{$(\mathcal{A}_7)$}] For each $y \in \mathbb{R}^n$, $\min{p \in \mathbb{R}^n} H(y,p) = H(y,0) = 0$.
\end{itemize}
Then the following results hold.
    \begin{itemize}
        \item[$\mathrm{(i)}$] 
        Assume \ref{itm:D2}. Let $\overline{H}_{\mathbb{R}^n}$ denote the effective Hamiltonian in \eqref{eqn:cell} posed on $\mathbb{T}^n$ in place of $\Omega$, and let $\tilde{u}$ be the solution of the effective equation
        \begin{equation*}
            \begin{cases}
                \begin{aligned}
                    \tilde{u}_t + \overline{H}_{\R^n}(D\tilde{u}) &= 0 &&\text{in}\;\R^n, \\
                    \tilde{u}(x,t) &= g(x) &&\text{on}\;\R^n.
                \end{aligned}
            \end{cases}
        \end{equation*}
        Then there exists a constant \( C = C(n, \partial D, H, \mathrm{Lip}(g)) \) such that, for all \( t \geq 0 \), the solution \( u^\varepsilon \) to \eqref{eqn:PDE_epsilon} posed on \( \Omega_\varepsilon \) as in \ref{itm:D2} satisfies
        \begin{equation*}
            \Vert u^\varepsilon(\cdot, t) - \tilde{u}(\cdot, t)\Vert_{L^\infty(\overline{\Omega}_\varepsilon)} \leq C(\varepsilon + \eta(\varepsilon)t). 
        \end{equation*}
        
        \item[$\mathrm{(ii)}$] Assume \ref{itm:D3}. There exists constants $C = C(n, \partial D, H, \mathrm{Lip}(g))$ and $M_0  = M_0(H, \mathrm{Lip}(g))$ such that the solution $w^\varepsilon$ to \eqref{eqn:PDE_epsilon} posed on $W_\varepsilon$ satisfies
        \begin{align*}
            |w^\varepsilon(x,t) - u(x,t)| \leq C(M_0t+|x|+1)\omega\left(\frac{\varepsilon}{M_0 t + |x|}\right) + C\varepsilon.
        \end{align*}
        for all $(x,t) \in \overline{W}_\varepsilon \times [0,\infty)$. 
    \end{itemize}
\end{prop}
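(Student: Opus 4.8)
The plan is to leverage Theorem \ref{thm:main1} as a black box together with the corresponding state-constraint results from \cite{han_quantitative_2024}, and to reconcile the two limiting equations \eqref{eqn:PDE_limit} and the state-constraint effective equation by exploiting assumption \ref{itm:A7}. The role of \ref{itm:A7} is decisive: since $\min_p H(y,p) = H(y,0) = 0$ for every $y$, the constant function $c(x,t) \equiv \min_x g(x)$ — or more usefully the stationary profile built from $g$ — interacts cleanly with the obstacle term $\max\{u - \overline b, u_t + \overline H(Du)\}$. Concretely, \ref{itm:A7} forces $\overline H(0) = 0$ and $\overline H \geq 0$, so that $u$ is nonincreasing in $t$ along the relevant characteristics; combined with $g \leq \overline b$ from \ref{itm:A5}, one checks that the obstacle constraint $u \leq \overline b$ is automatically satisfied and never active. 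Hence the limiting equation \eqref{eqn:PDE_limit} reduces to the unconstrained $u_t + \overline H(Du) = 0$ with $u(\cdot,0) = g$, which is exactly the effective equation appearing in the state-constraint homogenization of \cite[Theorems 1.2 and 1.3]{han_quantitative_2024}. This identification is what lets us transplant those results.

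For part (i), I would argue as follows. Fix $\varepsilon$ and recall $\Omega_\varepsilon = \varepsilon \Omega^{\eta(\varepsilon)}$ as in \ref{itm:D2}. First, by Theorem \ref{thm:main1} applied on the fixed domain $\Omega^{\eta(\varepsilon)}$ (whose boundary data we may take consistent with \ref{itm:A1}--\ref{itm:A6}), we get $\|u^\varepsilon(\cdot,t) - u_{\eta(\varepsilon)}(\cdot,t)\|_{L^\infty} \leq C\varepsilon$, where $u_{\eta(\varepsilon)}$ solves the limiting equation \eqref{eqn:PDE_limit} with effective Hamiltonian $\overline H_{\Omega^{\eta(\varepsilon)}}$. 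But the constant $C$ in Theorem \ref{thm:main1} depends on $\partial\Omega$, so I need the geometric quantities entering that constant to be uniform in $\eta(\varepsilon) \in [0,\tfrac12)$; this is where \ref{itm:D2} (obstacles contained in a fixed cube, fixed shape $D$ scaled down) is used — the relevant curvature and chord-arc bounds for $\partial(\eta D)$ are controlled by those of $\partial D$. Second, I would compare $\overline H_{\Omega^{\eta(\varepsilon)}}$ with $\overline H_{\mathbb R^n}$ (the whole-space effective Hamiltonian): as $\eta \to 0$ the state-constraint cell problem on $\Omega^\eta$ converges to the unconstrained one on $\mathbb T^n$, and the standard dilute-obstacle estimate gives $|\overline H_{\Omega^{\eta}}(p) - \overline H_{\mathbb R^n}(p)| \leq C\eta$ locally in $p$ (this is precisely the mechanism behind the $\eta(\varepsilon)t$ term in \cite{han_quantitative_2024}). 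Feeding this into the comparison principle for \eqref{eqn:PDE_limit} — together with the observation from \ref{itm:A7} that both limiting equations reduce to their unconstrained forms — yields $\|u_{\eta(\varepsilon)}(\cdot,t) - \tilde u(\cdot,t)\|_{L^\infty} \leq C\eta(\varepsilon)t$. The triangle inequality then gives $\|u^\varepsilon(\cdot,t) - \tilde u(\cdot,t)\|_{L^\infty} \leq C(\varepsilon + \eta(\varepsilon)t)$.

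For part (ii), the argument is structurally the same but with the defective comparison from \cite[Theorem 1.3]{han_quantitative_2024} replacing the dilute one. Here $W_\varepsilon = \varepsilon W$ with a positive density $\omega(1/k)$ of missing holes along scale-$k$ boxes. By Theorem \ref{thm:main1} applied on $W$ itself (viewed as a $C^1$, $\mathbb Z^n$-periodic perforated domain once we discard the missing-hole structure), $|w^\varepsilon(x,t) - u(x,t)| \leq C\varepsilon$ against the limiting equation with $\overline H = \overline H_W$; but $\overline H_W$ is not literally $\overline H_\Omega$ because of the missing holes, and the discrepancy is quantified — via the optimal-control representation \eqref{eqn:ocfue} and a path-surgery argument through the missing holes — by $C(M_0 t + |x| + 1)\,\omega(\varepsilon/(M_0 t + |x|))$, exactly as in the state-constraint analysis, with $M_0 = M_0(H, \mathrm{Lip}(g))$ controlling the speed of optimal trajectories (here \ref{itm:A7}/\eqref{eqn:K_0L} bound $\overline L$ and hence the relevant velocities). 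The hard part — and the only place genuinely new work is needed beyond bookkeeping — is verifying that the flexible-time feature of the Dirichlet optimal control formula (emphasized after Theorem \ref{thm:main1}) does not spoil the path-surgery estimates from the state-constraint setting: when a near-optimal path for $w^\varepsilon$ chooses to exit early at $\partial W_\varepsilon$ with terminal cost $b$, one must check that rerouting it through a nearby missing hole (where no such cost is paid) only improves the cost by the claimed modulus, and symmetrically that a path for $u$ can be perturbed to a feasible path for $w^\varepsilon$. Under \ref{itm:A7} the terminal/boundary cost $b \geq \overline b \geq g$ makes early exit never strictly advantageous for the lower bound, which is what closes this gap; the upper bound is the routine direction handled by the comparison principle plus the Theorem \ref{thm:main1} rate.
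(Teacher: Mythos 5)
There is a genuine gap. The decisive step in the paper is an \emph{exact identity} at the $\varepsilon$-level: under \ref{itm:A7}, the Dirichlet solution $u^\varepsilon$ coincides with the state-constraint solution $\hat u^\varepsilon$ of \eqref{eqn:PDE_sc} on the same perforated domain (Lemma \ref{lem:dirichletsc}). The proof is short: by Lemma \ref{prop:Lmin0}, $L(y,0)=0$, so any admissible Dirichlet path that exits at time $\tau>0$ can be extended backward by sitting still at zero running cost, and since $f_\varepsilon(\xi(\tau),\tau)=b(\xi(\tau),\xi(\tau)/\varepsilon)\ge \overline b(\xi(\tau))\ge g(\xi(\tau))$, the state-constraint value is no larger; the reverse inequality is immediate from the inclusion of admissible controls. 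Once $u^\varepsilon=\hat u^\varepsilon$ and $u=\hat u$, Proposition \ref{prop:DilutedDefect} is literally \cite[Theorems 1.2 and 1.3]{han_quantitative_2024} restated, with no new estimate needed. You correctly sense that \ref{itm:A7} makes "early exit never strictly advantageous" and that the obstacle in \eqref{eqn:PDE_limit} is inactive (your argument that $\overline H\ge 0$, $\overline L(0)\le 0$, hence $u\le g\le\overline b$, is fine for the limit equation), but you never convert this into the identity $u^\varepsilon=\hat u^\varepsilon$, which is the only thing that makes the transplantation of the state-constraint results legitimate and trivial.

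Because you miss this reduction, the alternative route you sketch has steps that would fail. In part (i) you apply Theorem \ref{thm:main1} on $\Omega^{\eta(\varepsilon)}$; but this domain varies with $\varepsilon$, and the constant in Theorem \ref{thm:main1} depends on $\partial\Omega$ through the chord-arc/rerouting constant $C_b$ of Lemma \ref{lem:A1}, whose uniformity in $\eta\in(0,\tfrac12)$ is asserted but not proved (and moreover the limiting equation you would get involves $\overline H_{\Omega^{\eta(\varepsilon)}}$, not $\overline H_{\mathbb R^n}$). You then invoke $|\overline H_{\Omega^\eta}(p)-\overline H_{\mathbb R^n}(p)|\le C\eta$ as "standard," but this bound is essentially the content of the dilute theorem itself; \cite{han_quantitative_2024} does not prove the dilute result by comparing effective Hamiltonians but by comparing the oscillating solutions directly, which is why the reduction to the state-constraint case is needed. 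In part (ii) the problem is worse: $W$ is \emph{not} $\mathbb Z^n$-periodic (holes are missing at $I\subsetneq\mathbb Z^n$), so Theorem \ref{thm:main1} does not apply to $W$ at all, and the "path-surgery" reconciliation of the flexible-time Dirichlet control formula with the defect estimates — which you explicitly flag as the hard, unfinished part — is precisely what the identity $w^\varepsilon=\hat w^\varepsilon$ renders unnecessary. As written, the proposal does not close either part.
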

\begin{rem}

Here, $M_0$, which depends on $H$ and $\mathrm{Lip}(g)$, serves as a velocity bound for the minimizers of the value function $w^\varepsilon$ (see Lemma \ref{lem:velocity bound}). By adapting the techniques developed in \cite{mitake_asymptotic_2008, ishii_asymptotic_2008}, we derive this estimate in a manner that eliminates the dependence of $M_0$ on $n$ and $\partial \Omega$, in contrast to \cite[Lemma 2.1]{han_quantitative_2024}. The issue of such dependence was previously noted in \cite[Remark 7]{han_quantitative_2024}; here, we provide a complete resolution.
\end{rem}



\subsection*{Notations} 
We write $B(x,r) = B_r(x)$ to denote the open ball centered at $x\in \R^n$ with radius $r>0$. 
We write $Y=[-\frac{1}{2},\frac12]^n$ 
as the unit cube in $\R^n$. 
For $g \in \mathrm{Lip}(\mathbb{R}^n)$, the space of Lipschitz functions on $\mathbb{R}^n$, let $\mathrm{Lip}(g)$ denote its Lipschitz constant.
For $a, b \in \mathbb{R}^n$, $a \vee b$ denotes $\max\{a, b\}$, and $a \wedge b$ denotes $\min\{a, b\}$. For $p,q \in \mathbb{R}^n$, we denote by $[p,q]$ the line segment connecting $p$ and $q$, that is, $[p,q] := \{\, tp + (1-t)q : t \in [0,1] \,\}$. Let $\mathrm{AC}(J,U)$ be the set of absolutely continuous curves from $J \to U$. For $S \subset \mathbb{R}^n$, define 
\begin{equation*}
    \mathcal{C}(x,t;S) = \left\{\gamma\in \mathrm{AC}([0,t];S): \gamma(t)=x \right\}.     
\end{equation*}
For \(a \in \mathbb{R}\), we denote by \(\lfloor a \rfloor\) 
the greatest integer that does not exceed \(a\). For $\varepsilon, T>0$, set
\begin{align}\label{eq:QTeps}
	Q_T^\varepsilon := \Omega_\varepsilon \times (0,T), 
\qquad 
\partial Q_T^\varepsilon := (\partial\Omega_\varepsilon \times (0,T)) \cup (\overline{\Omega}_\varepsilon \times \{0\}).
\end{align}
If $\varepsilon=0$, we set 
\begin{equation}\label{eq:QT0}
	Q_T := \Omega\times (0,T), \qquad \partial Q_T:= (\partial\Omega\times (0,T)) \cup (\overline{\Omega}\times \{0\})
\end{equation}
Let $\phi \in C_c^\infty(B_1)$ satisfy $0 \le \phi \le 1$ and $\int_{\R^n} \phi(x)\,dx = 1$.  
For $\varepsilon > 0$, define 
\begin{equation}\label{eq:mollifier}
    \phi_\varepsilon(x) = \varepsilon^{-n}\phi(x/\varepsilon), \qquad x\in \R^n,
\end{equation}  
which is the standard mollifier.

\subsection*{Organization of this paper}
In Section \ref{sec:prelim}, we present the optimal control formulas for the solutions \(u^\varepsilon\) of \eqref{eqn:PDE_epsilon} and \(u\) of \eqref{eqn:PDE_limit}, along with several properties of these solutions that will be used in subsequent sections. In Section \ref{sec:costfunction}, we define the cost (metric) function in our setting, derive its properties based on \cite{han_quantitative_2024}, extend it to the whole space, and relate its limiting metric to the solution \(u\) of \eqref{eqn:PDE_limit}. The proof of Theorem \ref{thm:main1} is provided in Section \ref{sec:ProofThm1}. Section \ref{sec:discussion} examines the optimality of Theorem \ref{thm:main1}, its implications for dilute and defective domains, and includes the proof of Proposition \ref{prop:DilutedDefect}. Finally, the Appendix contains auxiliary results from the state-constraint setting and a technical lemma on smooth domain shrinkage.


\section{Representation of solutions via optimal control} \label{sec:prelim}

We present the optimal control formulas for \(u\) and \(u^\varepsilon\), giving full details only for \(u\) since the formula for \(u^\varepsilon\) is well known.
For clarity and simplicity, we first introduce the following auxiliary functions.
\begin{defn}
    Define $\fep: \left(\partial \Omega_\varepsilon \times (0, \infty)\right) \cup \left(\overline{\Omega}_\varepsilon \times \{0\} \right) \to \mathbb{R}$ by
    \begin{equation}\label{eqn:deff_ep}
        f_\varepsilon(x, t) = \left\{
        \begin{aligned}
        &g(x), \qquad \qquad\text{ if } (x, t) \in \overline{\Omega}_\varepsilon \times \{0\} \\
        &b\left(x, \frac{x}{\varepsilon}\right), \qquad\text{ if } (x, t) \in \partial \Omega_\varepsilon \times (0, \infty).
        \end{aligned}\right.
    \end{equation}
    Define $f:\mathbb{R}^n \times [0, \infty) \to \mathbb{R}$ by
    \begin{equation}\label{eqn:deff}
        f(x, t) = \left\{
        \begin{aligned}
        &g(x), \text{ if } t=0\\
        &\overline{b}(x), \text{ if } t>0,
        \end{aligned}\right.
    \end{equation}
     where $\overline{b}: \R^n \to \R$ is defined by 
    \[
    \overline{b}(x)= \min_{z \in\partial \Omega} b(x,z).
    \]
\end{defn}
\begin{rem} Some remarks are in order. 
\begin{enumerate}[leftmargin=0.7cm]
    \item[(a)] Assume \ref{itm:A4}, \ref{itm:A5}. Since $g \leq \overline{b}$ on $\mathbb{R}^n$, we have
    \begin{align}\label{eqn:ftinc} 
        g(x) \leq  f(x,s)  \leq f(x,t)
    \end{align}
    for $x\in \R^n, 0\leq s\leq t$. Similarly, for $\varepsilon>0$,
    \begin{equation}\label{eqn:feptinc}
        g(x) \leq \fep (x, s)\leq \fep(x, t) 
    \end{equation}
    for $(x,t), (x,s) \in 
        \left(\partial \Omega_\varepsilon \times (0, \infty)\right) \cup \big(\overline{\Omega}_\varepsilon \times \{0\} \big)$ with $s \leq t$. We also note that
        \begin{equation}\label{eq:ffeps}
        \begin{aligned}
        &	f_\varepsilon(x,t) = b\left(x, \frac{x}{\varepsilon}\right) \geq \overline{b}(x) = f(x,t) 
        &&\qquad (x,t)\in \partial\Omega_\varepsilon \times (0,\infty), \\
		&   f_\varepsilon(x,t) = g(x) = f(x,t)         	
		&&\qquad (x,t)\in \overline{\Omega}_\varepsilon \times \{0\}.
        \end{aligned}
        \end{equation}

    \item[(b)] Assume \ref{itm:A4}, \ref{itm:A5}. Let $\ep>0$. Since $g\in \mathrm{Lip}\left(\mathbb{R}^n\right)$ and $b \in \mathrm{BUC}\left(\mathbb{R}^n\right)$, for any $(x,t), \left( \tilde{x},t \right) \in \left(\partial \Omega_\varepsilon \times (0, \infty)\right) \cup \left(\overline{\Omega}_\varepsilon \times \{0\} \right)$,
    \begin{equation}\label{eqn:fepucinx}
        \left|\fep(x,t) - \fep(\tilde{x}, t)\right|\leq \omega_{\fep}\left( \left|x-\tilde{x}\right|\right),
    \end{equation}
    where $\omega_{\fep}$ is some modulus of continuity that depends on $\ep$, $\left\|Dg\right\|_{L^\infty\left(\mathbb{R}^n\right)}$, and the modulus of continuity of $b$. Similarly, since $g\in \mathrm{Lip}\left(\mathbb{R}^n\right)$ and $\overline{b} \in \mathrm{BUC}\left(\mathbb{R}^n\right)$, for any $x, \tilde{x} \in \mathbb{R}^n$ and $t \geq 0$,
    \begin{equation}\label{eqn:fucinx}
        \left|f(x,t) - f(\tilde{x}, t)\right|\leq \omega_f\left( \left|x-\tilde{x}\right|\right),
    \end{equation}
    where $\omega_f$ is some modulus of continuity that depends on $\left\|Dg\right\|_{L^\infty\left(\mathbb{R}^n\right)}$ and the modulus of continuity of $b$. 
    \item[(c)] Assume \ref{itm:A4}--\ref{itm:A6}. Since $b$ and $\overline{b}$ are Lipschitz continuous, we have
    \begin{equation}\label{eqn:feplipinx}
        \left|\fep(x,t) - \fep(\tilde{x}, t)\right|\leq C_{\fep}\left( \left|x-\tilde{x}\right|\right)
    \end{equation}
   for $(x,t), \left( \tilde{x},t \right) \in \left(\partial \Omega_\varepsilon \times (0, \infty)\right) \cup \left(\overline{\Omega}_\varepsilon \times \{0\} \right)$, 
     where $C_{\fep}>0$ is a constant that depends on $\ep$, $\left\|Dg\right\|_{L^\infty\left(\mathbb{R}^n\right)}$, and $ \left\|Db\right\|_{L^\infty \left(\mathbb{R}^n\times \partial \Om\right)}$. For any $x, \tilde{x} \in \mathbb{R}^n$ and $t \geq 0$,
    \begin{equation}\label{eqn:flipinx}
        \left|f(x,t) - f(\tilde{x}, t)\right|\leq C_f\left( \left|x-\tilde{x}\right|\right),
    \end{equation}
    where $C_f>0$ is a constant that depends on $\left\|Dg\right\|_{L^\infty\left(\mathbb{R}^n\right)}$ and $ \left\|Db\right\|_{L^\infty \left(\mathbb{R}^n\times \partial \Om\right)}$.
\end{enumerate}
\end{rem}

\subsection{Optimal control formula for $u$}
We now provide an optimal control representation of $u$, analogous to the Hopf–Lax formula. 

\begin{prop}\label{prop:ValueU} Assume \ref{itm:A1}--\ref{itm:A5}. For $x\in \R^n$ and $t\geq0$, we have
\begin{equation}\label{eq:HLubar}
u(x, t)=\begin{cases}
\displaystyle\inf_{\tau \in [0,t),\, y \in \mathbb{R}^n}
\left\{
(t - \tau)\, \overline{L}\left(\frac{x - y}{t - \tau}\right) + f(y, \tau)
\right\}, & \text{if } t > 0, \\
g(x), & \text{if } t = 0.
\end{cases}   
\end{equation}
\end{prop}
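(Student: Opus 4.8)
I would show that the right-hand side of \eqref{eq:HLubar}, which I denote by $w$ (with $w(x,0):=g(x)$), is the unique viscosity solution of \eqref{eqn:PDE_limit}, and then conclude $w=u$ from the comparison principle for \eqref{eqn:PDE_limit} (see \cite[Theorem~1.2]{Horie1998Homogenization}). First I would record the elementary properties. Taking $\tau\uparrow t$ and $y=x$ in \eqref{eq:HLubar} gives $w(x,t)\le (t-\tau)\overline L(0)+\overline b(x)\to \overline b(x)$, so $w\le \overline b$ on $\R^n\times(0,\infty)$ and the obstacle constraint holds automatically; taking $\tau=0$, $y=x$ gives $w(x,t)\le g(x)+K_0t$; and the lower bound $\overline L(v)\ge \tfrac12|v|^2-K_0$ from \eqref{eq:Hbar-Lbar}, together with $f(y,\tau)\ge g(y)\ge g(x)-\mathrm{Lip}(g)|x-y|$, gives after minimizing in $|x-y|$ that $w(x,t)\ge g(x)-Ct$. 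Hence $|w(x,t)-g(x)|\le Ct$, so $w$ is continuous up to $\{t=0\}$ with $w(\cdot,0)=g$; a translation of the starting point $y$ shows $w(\cdot,t)$ is uniformly continuous in $x$ uniformly in $t$, with modulus governed by $\mathrm{Lip}(g)$ and the modulus of $\overline b$, so $w\in \mathrm{BUC}(\R^n\times[0,T])$ for each $T$ (using $g\in \mathrm{Lip}(\R^n)\cap \mathrm{BUC}(\R^n)$ and $\overline b\in \mathrm{BUC}(\R^n)$ by \ref{itm:A4}--\ref{itm:A5}). Throughout I use the duality $\overline H(p)=\sup_{v}\{p\cdot v-\overline L(v)\}$, valid since $\overline H$ is convex.

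\textbf{Subsolution.} Given a near-optimal path for $w(y,s)$, concatenating it with the segment from $y$ to $x$ over $[s,t]$ and applying Jensen's inequality to the convex $\overline L$ yields the one-sided dynamic programming inequality $w(x,t)\le (t-s)\overline L(v)+w(x-(t-s)v,s)$ for all $v\in\R^n$ and $0<s<t$. If $\phi$ is smooth and $w-\phi$ has a local maximum at $(x_0,t_0)$ with $t_0>0$, substituting this into the inequality, expanding $\phi$, letting $s\uparrow t_0$ and optimizing over $v$ gives $\phi_t(x_0,t_0)\le -\overline H(D\phi(x_0,t_0))$; combined with $w\le \overline b$ this is exactly the subsolution inequality for \eqref{eqn:PDE_limit}.

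\textbf{Supersolution --- the main obstacle.} Let $w-\phi$ have a local minimum at $(x_0,t_0)$ with $t_0>0$, normalized so $w(x_0,t_0)=\phi(x_0,t_0)$. If $w(x_0,t_0)=\overline b(x_0)$ the required inequality $\max\{\cdot,\cdot\}\ge 0$ holds trivially, so assume $w(x_0,t_0)<\overline b(x_0)$. The crucial lemma is that the infimum defining $w(x_0,t_0)$ is unchanged if $\tau$ is restricted to $[0,t_0-\delta]$ for a suitable $\delta>0$: for $\tau\in[t_0-\delta,t_0)$, the bound $\overline L(v)\ge \tfrac12|v|^2-K_0$ gives $(t_0-\tau)\overline L\big(\tfrac{x_0-y}{t_0-\tau}\big)+\overline b(y)\ge \tfrac{|x_0-y|^2}{2\delta}-K_0\delta+\overline b(y)\ge \inf_z\big\{\tfrac{|x_0-z|^2}{2\delta}+\overline b(z)\big\}-K_0\delta$, and the last quantity tends to $\overline b(x_0)>w(x_0,t_0)$ as $\delta\downarrow 0$ by continuity of $\overline b$. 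On the restricted range the effective duration obeys $t_0-\tau\ge\delta$, so a near-minimizing pair $(\tau,y)$ has bounded velocity $|x_0-y|/(t_0-\tau)\le C_\delta$ (first bound $|x_0-y|$ using $\overline L(v)\ge\tfrac12|v|^2-K_0$ and $\mathrm{Lip}(g)$, then divide by $t_0-\tau\ge\delta$). Splitting such an optimal segment at a time $s\in(t_0-\delta,t_0)$ produces the reverse one-sided principle
\[
w(x_0,t_0)\ \ge\ \inf_{|v|\le C_\delta}\big\{(t_0-s)\overline L(v)+w\big(x_0-(t_0-s)v,\,s\big)\big\},
\]
whose right-hand infimum is attained at some $v_s$ with $|v_s|\le C_\delta$; then $|x_0-(t_0-s)v_s|=O(t_0-s)$, so substituting $w\ge\phi$, expanding $\phi$, dividing by $t_0-s$ and letting $s\uparrow t_0$ gives $\phi_t(x_0,t_0)\ge \overline L(v_s)-D\phi(x_0,t_0)\cdot v_s+o(1)\ge -\overline H(D\phi(x_0,t_0))$, which is the supersolution inequality.

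\textbf{Conclusion.} Since $w$ is a viscosity solution of \eqref{eqn:PDE_limit} with $w(\cdot,0)=g$ and $w\in \mathrm{BUC}(\R^n\times[0,T])$ for every $T$, the comparison principle for \eqref{eqn:PDE_limit} forces $w=u$, which is \eqref{eq:HLubar}. The hardest point is the supersolution step: without the a priori localization $\tau\le t_0-\delta$ --- valid only because $w(x_0,t_0)<\overline b(x_0)$ and $\overline b$ is continuous --- the near-minimizing velocities in the dynamic programming principle degenerate like $(t_0-s)^{-1/2}$ and the Taylor expansion of the test function breaks down; this is precisely the singularity at $\tau\to t$ that the paper isolates, handled here at the level of the limiting value function.
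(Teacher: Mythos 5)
Your proposal is correct and takes essentially the same approach as the paper: it shows the right-hand side $w$ is a bounded uniformly continuous viscosity solution of \eqref{eqn:PDE_limit} (subsolution via the one-sided dynamic-programming inequality, supersolution via localizing $\tau$ away from $t_0$ using $w(x_0,t_0)<\overline b(x_0)$) and concludes by uniqueness, which is exactly what the paper does through Lemma~\ref{lem:DPPHbar}(b),(d) and Proposition~\ref{prop:ocfcont}. The paper packages the $\tau$-localization as a standalone lemma and obtains the velocity bound and the split point $z_m$ in slightly different notation, but the key observation --- that the singularity at $\tau\to t_0$ is harmless precisely because $w<\overline b$ there, so near-optimal $\tau$ are uniformly bounded away from $t_0$ --- is identical in both arguments.
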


Note that the optimal control formula on the right-hand side of \eqref{eq:HLubar} is very similar to the classical Hopf–Lax formula, except that the infimum is also taken over an additional parameter $\tau \in [0, t)$. In other words, the entire time interval $t$ need not be used; instead, we only consider straight-line trajectories that travel from $y$ to $x$ in time $t-\tau$.

Before proving Proposition \ref{prop:ValueU}, we establish in the following lemma several properties of the optimal control formula on the right-hand side of \eqref{eq:HLubar}, denoted by $w(x, t)$, including a version of the Dynamic Programming Principle (see also \cite[Theorem 6.1]{barles_introduction_2013}). Among these, part (b) is useful in proving Proposition \ref{prop:ValueU}: it shows that when $w(x, t)<\ol{b}(x)$ with $t>0$, the time $\tau$ is uniformly bounded away from $t$. This property will be particularly important when handling the case of all $\tau \in[0, t)$.

\begin{lem}\label{lem:DPPHbar}
Assume \ref{itm:A1}--\ref{itm:A5}. Define the function \( w : \mathbb{R}^n \times (0, \infty) \to \mathbb{R} \) by
\begin{equation}\label{eqn:ocfw}
w(x, t) :=
\begin{cases}
\displaystyle\inf_{\tau \in [0,t),\, y \in \mathbb{R}^n}
\left\{
(t - \tau)\, \overline{L}\left(\frac{x - y}{t - \tau}\right) + f(y, \tau)
\right\}, & \text{if } t > 0, \\
g(x), & \text{if } t = 0.
\end{cases}
\end{equation}
Then, the following statements hold: 
\begin{enumerate}[label=$\mathrm{(\alph*)}$, leftmargin=0.7cm]
	\item For all $(x,t)\in \R^n\times [0,\infty)$, $w(x,t) \leq \overline{b}(x)$. 
    \item[$\mathrm{(b)}$] Let \( t > 0 \) and \( x \in \mathbb{R}^n \). Suppose $w(x, t)< \overline{b}(x)$. For any $\ep >0$, by definition there exist $\tau_\ep\in [0,t)$ and $y_\ep \in \R^n$ such that
    \begin{equation}\label{eqn:appw}
    w(x,t) \leq (t-\tau_\ep) \overline{L}\left(\frac{x-y_\ep}{t-\tau_\ep}\right)+f(y_\ep, \tau_\ep)  \leq w(x,t) + \ep.
    \end{equation}
    Then there exists $\ep_0 > 0$ such that for all $0 < \ep < \ep_0$,
    \begin{equation}\label{eqn:unifbdtauep}
    0 \leq \tau_\ep < \tau_{x, t} < t,
    \end{equation}
    for some constant $\tau_{x, t} \in (0, t)$. In other words, $\{\tau_\ep: \ep \in \left(0, \ep_0\right)\}$ is uniformly bounded away from $t$. Moreover, for $\ep \in \left(0, \ep_0\right)$, 
    \[
    \left|x-y_\ep\right|\leq C_{x,t}|t-\tau_\ep|
    \]
    for some constant $C_ {x,t}=C_{x,t}\left(x, t, H, \|g\|_{L^\infty(\R^n)} \right)>0$.
    \item[$\mathrm{(c)}$] Let \( t > 0 \) and \( x \in \mathbb{R}^n \). There exists a constant $C_ {x,t}=C_{x,t}\left(x, t, H, \|g\|_{L^\infty(\R^n)}\right)>0$ such that
    \begin{equation}\label{eqn:ocfybdd}
        w(x, t) = \inf
    \left\{
    (t - \tau)\, \overline{L}\left(\frac{x - y}{t - \tau}\right) + f(y, \tau): \tau \in [0, t), \left|x-y\right|\leq C_{x, t}\left|t-\tau\right|
    \right\}.    
    \end{equation}
    
    \item[$\mathrm{(d)}$] \emph{(Dynamic Programming Principle)} Let \( t > 0 \) and \( x \in \mathbb{R}^n \). For any \( s \in (0, t) \), we have
    \begin{align}\label{eq:DPPHbar}
    w(x, t) = \inf_{\tau \in [0, t),\, y \in \mathbb{R}^n}
    \Big\{ 
    \left(t - (\tau \vee s)\right) \overline{L}\left( \frac{x - y}{t - (\tau \vee s)} \right)
    + \chi_{\{s > \tau\}}\, w(y, s)
    + \chi_{\{s \leq \tau\}}\, f(y, \tau)
    \Big\}.
    \end{align}
\end{enumerate}
\end{lem}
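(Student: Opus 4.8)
The plan is to prove the four parts in order, noting that (c) follows immediately from (a) and (b), while (a), (b), (d) are independent. I will use repeatedly the quadratic bounds $\tfrac{|v|^2}{2}-K_0\le\ol L(v)\le\tfrac{|v|^2}{2}+K_0$ from \eqref{eq:Hbar-Lbar}, the inequality $g\le\ol b$ from \ref{itm:A5}, and the fact that $\ol b$, being a pointwise minimum over $\partial\Omega$ of the $\mathrm{BUC}$ function $b$, is itself $\mathrm{BUC}$ on $\R^n$. \textbf{Part (a):} for $t=0$ this is exactly $g\le\ol b$; for $t>0$, test the infimum in \eqref{eqn:ocfw} with $y=x$ and $\tau\uparrow t$, so the competitor value equals $(t-\tau)\ol L(0)+f(x,\tau)=(t-\tau)\ol L(0)+\ol b(x)$ for $\tau\in(0,t)$, and since $\ol L(0)$ is a fixed finite number this tends to $\ol b(x)$.

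\textbf{Part (b)} is the heart of the lemma and the step I expect to be the main obstacle. The key is a kinetic-energy estimate. From \eqref{eqn:appw}, the bound $\ol L(v)\ge\tfrac{|v|^2}{2}-K_0$, and the crude pointwise estimate $f\ge g\ge-\|g\|_{L^\infty(\R^n)}$ (a consequence of \ref{itm:A5}), one obtains
\[
w(x,t)+\ep\ \ge\ \frac{|x-y_\ep|^2}{2(t-\tau_\ep)}\;-\;K_0\,t\;-\;\|g\|_{L^\infty(\R^n)},
\]
while testing $y=x$, $\tau=0$ gives the matching upper bound $w(x,t)\le t\,\ol L(0)+g(x)\le K_0t+\|g\|_{L^\infty(\R^n)}$. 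Together, for $\ep<1$, these force an estimate $|x-y_\ep|^2\le R_0\,(t-\tau_\ep)$ with $R_0$ depending only on $t$, $H$, $\|Dg\|_{L^\infty(\R^n)}$ and $\|g\|_{L^\infty(\R^n)}$; in particular $y_\ep\to x$ whenever $\tau_\ep\to t$. Now suppose, for contradiction, that $\tau_{\ep_k}\to t$ along a sequence $\ep_k\downarrow0$. Since $t>0$, eventually $\tau_{\ep_k}>0$, so $f(y_{\ep_k},\tau_{\ep_k})=\ol b(y_{\ep_k})$; discarding all of the kinetic term except the error $-K_0(t-\tau_{\ep_k})$ in \eqref{eqn:appw} gives $w(x,t)+\ep_k\ge-K_0(t-\tau_{\ep_k})+\ol b(y_{\ep_k})$, and passing to the limit using $y_{\ep_k}\to x$ and continuity of $\ol b$ yields $w(x,t)\ge\ol b(x)$, contradicting $w(x,t)<\ol b(x)$. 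Hence $\tau^*:=\limsup_{\ep\to0}\tau_\ep<t$, and any choice $\tau_{x,t}\in(\tau^*,t)$ with $\ep_0$ small gives \eqref{eqn:unifbdtauep}. The linear bound then follows: for $\ep<\ep_0$ we have $t-\tau_\ep\ge t-\tau_{x,t}>0$, so $|x-y_\ep|^2\le R_0(t-\tau_\ep)\le\tfrac{R_0}{t-\tau_{x,t}}(t-\tau_\ep)^2$, which is $|x-y_\ep|\le C_{x,t}|t-\tau_\ep|$.

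\textbf{Part (c)} is then short: the restricted infimum is $\ge w(x,t)$ trivially, and for the reverse, if $w(x,t)<\ol b(x)$ the ``moreover'' clause of (b) places the near-minimizers $(\tau_\ep,y_\ep)$ (for $\ep<\ep_0$) in the admissible set $\{|x-y|\le C_{x,t}|t-\tau|\}$, so the restricted infimum is $\le w(x,t)+\ep$; if instead $w(x,t)=\ol b(x)$, the competitors $y=x$, $\tau\uparrow t$ from the proof of (a) already lie in the restricted set and realize $\ol b(x)$ in the limit. \textbf{Part (d)} is the standard Hopf--Lax dynamic programming argument, with the free endpoint time handled by splitting on whether $\tau<s$ or $\tau\ge s$. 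For ``$\ge$'': a competitor $(\tau,y)$ for $w(x,t)$ with $\tau\ge s$ reappears verbatim on the right of \eqref{eq:DPPHbar}; for $\tau<s$, set $z:=y+\tfrac{s-\tau}{t-\tau}(x-y)$, the point reached at time $s$ along the constant-speed segment, so that $\tfrac{z-y}{s-\tau}=\tfrac{x-z}{t-s}=\tfrac{x-y}{t-\tau}$, split $(t-\tau)\ol L(\cdot)=(s-\tau)\ol L(\cdot)+(t-s)\ol L(\cdot)$, and use $(s-\tau)\ol L\!\left(\tfrac{z-y}{s-\tau}\right)+f(y,\tau)\ge w(z,s)$. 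For ``$\le$'': a term on the right of \eqref{eq:DPPHbar} with $\tau\ge s$ is itself a competitor for $w(x,t)$; for $\tau<s$, pick a near-minimizer $(\sigma,z)$ of $w(y,s)$ and apply convexity of $\ol L$ with weights $\tfrac{t-s}{t-\sigma},\tfrac{s-\sigma}{t-\sigma}$ to get $(t-s)\ol L\!\left(\tfrac{x-y}{t-s}\right)+(s-\sigma)\ol L\!\left(\tfrac{y-z}{s-\sigma}\right)\ge(t-\sigma)\ol L\!\left(\tfrac{x-z}{t-\sigma}\right)$, so the term dominates $(t-\sigma)\ol L\!\left(\tfrac{x-z}{t-\sigma}\right)+f(z,\sigma)-\ep\ge w(x,t)-\ep$. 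As flagged, the delicate point is (b): coercivity of $\ol L$ prevents a near-optimal path from moving far in the vanishing time $t-\tau_\ep$, and continuity of $\ol b$ then closes the contradiction; the remaining parts are direct test-competitor computations or the routine concatenation-and-Jensen argument.
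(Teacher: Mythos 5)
Your proof is correct and follows essentially the same approach as the paper's for all four parts: the same test competitors $y=x$, $\tau\uparrow t$ for (a), the same coercivity/contradiction argument for (b) (the paper phrases the contradiction as a sign conflict on the limit of the kinetic term, you phrase it as directly deducing $w(x,t)\ge\ol b(x)$, but these are the same estimate), the same case split for (c), and the same intermediate-point/Jensen argument with a $\tau\lessgtr s$ split for (d).
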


\begin{proof}[Proof of Lemma \ref{lem:DPPHbar}] \quad 
\begin{enumerate}[label=$\mathrm{(\alph*)}$, leftmargin=0.7cm]
\item 
For $x \in \mathbb{R}^n$ and $t > 0$, let $\tau := t - \varepsilon$ (with $\varepsilon > 0$) and $y := x$ in \eqref{eqn:ocfw}. Then we obtain
\[
w(x, t) \leq \varepsilon \overline{L}(0)+ f(x, t-\varepsilon).
\]
Sending $\varepsilon \to 0$, we have $w(x,t) \leq f(x,t) = \overline{b}(x)$ for $t>0$. Moreover, \ref{itm:A5} implies $w(x,0) = g(x)\leq \overline{b}(x)$.

    \item We proceed by contradiction. If such $\ep_0$ does not exist, then there exists a subsequence $\{\tau_{\ep_k}\}$ such that 
    \begin{equation}\label{eqn:limtaueqt}
     \lim_{k\to \infty} \tau_{\ep_k}=t.   
    \end{equation}
    Let $y=x$ and $\tau=0$ in \eqref{eqn:ocfw}. Then $w(x, t) \leq t\overline{L}(0) + g(x)$. 
    Combining this with \eqref{eq:Hbar-Lbar}, \eqref{eqn:ftinc}, and \eqref{eqn:appw}, we obtain
\[
\begin{aligned}
\frac{|x-y_\ep|^2}{2(t-\tau_\ep)}-K_0(t-\tau_\ep)+g(y_\ep)&\leq (t-\tau_\ep) \overline{L}\left(\frac{x-y_\ep}{t-\tau_\ep}\right)+f(y_\ep, \tau_\ep)\\  
&\leq w(x,t) + \ep \leq t\overline{L}(0) + g(x) +\ep,
\end{aligned}    
\]
which implies
\begin{equation}\label{eqn:xyleqsqrtttau}
|x-y_\ep|\leq C |t-\tau_\ep|^{1/2},    
\end{equation}
for some constant $C=C\left(t, \overline{L}(0),K_0, \|g\|_{L^\infty(\R^n)}\right)>0$. Hence, from \eqref{eqn:limtaueqt}, $y_{\ep_k}\to x$ as $k\to\infty$. Since $t>0$, $\tau_{\ep_k}>0$ for $k$ large. Hence, $f(y_{\varepsilon_k}, \tau_{\varepsilon_k}) = \overline{b}(y_{\varepsilon_k})\to \overline{b}(x)$ as $k\to \infty$. 
By \eqref{eqn:appw}, along $\tau_{\varepsilon_k}$ we have
\[
\lim_{k\to \infty} (t-\tau_{\ep_k}) \overline{L}\left(\frac{x-y_{\ep_k}}{t-\tau_{\ep_k}}\right) = w(x, t) - \overline{b}(x)<0.
\]
But from \eqref{eq:Hbar-Lbar}, we have
\begin{align*}
    \lim_{k\to \infty} (t-\tau_{\ep_k}) \overline{L}\left(\frac{x-y_{\ep_k}}{t-\tau_{\ep_k}}\right) \geq \lim_{k\to \infty}  -(t-\tau_{\ep_k}) K_0 = 0,
\end{align*}
which contradicts the previous inequality.
Thus, \eqref{eqn:unifbdtauep} holds, i.e., there exists $\ep_0 > 0$ such that for all $0 < \ep < \ep_0${\red ,} we have $0 \leq \tau_\ep < \tau_{x, t} < t$ for some constant $\tau_{x, t} \in (0, t)$. Note that the constant $\tau_{x, t} \in (0, t)$ may depend on $x, t$. Finally, since $\tau_\ep  < \tau_{x, t}$ for all $\ep < \ep_0$, it follows from \eqref{eqn:xyleqsqrtttau} that
\[
\frac{|x-y_\ep|^2}{(t-\tau_\ep)^2}\leq 
\frac{C}{t-\tau_\ep}\leq \frac{C}{t-\tau_{x, t}}.
\]
Hence, $|x-y_\varepsilon|\leq C_{x,t}|t-\tau_\varepsilon|$ for some constant $C_{x, t}=C_{x, t}\left(x, t, \overline{L}(0), K_0, \|g\|_{L^\infty(\R^n)}\right)>0$.

    \item[(c)]If $w(x, t) <\overline{b}(x)$, then the conclusion follows from part (b).
    Suppose $w(x, t) =\overline{b}(x)$. Choose $y=x$ and $\tau_\varepsilon=t-\varepsilon$ for $\varepsilon>0$. Then
    \[
    \lim_{\varepsilon\to 0} \left(\left(t-\tau_\varepsilon\right)\ol{L}\left(\frac{x-y}{t-\tau_\varepsilon}\right)+ f(y,\tau_\varepsilon)\right) 
    = 
    \lim_{\varepsilon\to 0} \Big(\varepsilon \ol{L}(0)+f(x, \tau_\varepsilon)\Big) =\ol{b}(x).
    \]
    Clearly, $0=|y-x|\leq \left|t-\tau_\varepsilon\right|=\varepsilon$ for every $\varepsilon>0$.
    Thus, 
    \[
     w(x, t) = \inf
    \left\{
    (t - \tau)\, \overline{L}\left(\frac{x - y}{t - \tau}\right) + f(y, \tau): \tau \in [0, t), \left|x-y\right|\leq C_{x, t}\left|t-\tau\right|
    \right\},
    \]
    with $C_{x, t}=1$ in this case.
    \item[(d)]
Define $I:[0, t)\times (0,t) \times \R^n \to \mathbb{R}$ by
\begin{equation}\label{eq:Idef}
	I(\tau, s, y):= \left(t- (\tau \vee s)\right)\overline{L}\left( \frac{x-y}{t- (\tau \vee s)} \right) + \chi_{\{s > \tau\}} \, w\left(y, s \right) 
     + \chi_{\{s \leq \tau\}} \, f\left(y, \tau \right).
\end{equation}
We aim to show that for any $s \in (0, t)$,
\[
w(x, t)=\inf \left\lbrace I(\tau, s, y): \tau\in[0,t), y \in \R^n \right\rbrace.
\]
Fix $s \in (0, t)$. Let $\tau \in [0,t), \, y \in \R^n$. We show that $w(x,t)\leq I(\tau, s, y)$. If $\tau \geq s$, then 
\begin{equation}\label{eqn:wleqsltau}
	I(\tau, s, y)=(t-\tau) \overline{L}\left(\frac{x-y}{t-\tau}\right) + f(y, \tau) \geq w(x,t),
\end{equation}
by the definition of $w(x,t)$ in \eqref{eqn:ocfw}. If $\tau<s$, then
\[
I(\tau, s, y)=(t-s) \overline{L}\left(\frac{x-y}{t-s}\right)+w(y, s).
\]
For $w(y, s)$, take $\varepsilon>0$ and choose $\tau_\varepsilon \in [0, s)$, $z_\varepsilon \in \R^n$ such that
\[
w(y, s) + \varepsilon \geq (s-\tau_\varepsilon)\overline{L}\left(\frac{y-z_\varepsilon}{s-\tau_\varepsilon}\right)+f(z_\varepsilon, \tau_\varepsilon).
\]
Since $\overline{L}$ is convex and 
\begin{equation*}
	\frac{x-z_\varepsilon}{t-\tau_\varepsilon}
	=
	\left(1-\lambda\right) \frac{x-y}{t-s} 
	+ 
	\lambda \frac{y-z_\varepsilon}{s-\tau_\varepsilon}, \qquad\text{where}\qquad \lambda = \frac{s-\tau_\varepsilon}{t-\tau_\varepsilon}, 
\end{equation*}
we have
\[
\left(t-\tau_\varepsilon\right) \overline{L}\left(\frac{x-z_\varepsilon}{t-\tau_\varepsilon}\right) \leq (t-s) \overline{L}\left(\frac{x-y}{t-s}\right)+\left(s-\tau_\varepsilon\right)\overline{L}\left(\frac{y-z_\varepsilon}{s-\tau_\varepsilon}\right).
\]
Thus,
\[
\begin{aligned}
w(x,t) \leq& \left(t-\tau_\varepsilon\right) \overline{L}\left(\frac{x-z_\varepsilon}{t-\tau_\varepsilon}\right) + f(z_\varepsilon, \tau_\varepsilon)\\
\leq & (t-s) \overline{L}\left(\frac{x-y}{t-s}\right)+\left(s-\tau_\varepsilon\right)\overline{L}\left(\frac{y-z_\varepsilon}{s-\tau_\varepsilon}\right) + f(z_\varepsilon, \tau_\varepsilon)\\
\leq &(t-s) \overline{L}\left(\frac{x-y}{t-s}\right)+w(y, s) +\varepsilon.
\end{aligned}
\]
Since $\varepsilon$ is arbitrary, we conclude 
\begin{equation}\label{eqn:wleqsgtau}
    w(x,t) \leq (t-s) \overline{L}\left(\frac{x-y}{t-s}\right)+w(y, s)=I(\tau, s, y).
\end{equation}
Combining \eqref{eqn:wleqsltau} and \eqref{eqn:wleqsgtau}, we obtain $w(x,t)\leq I(\tau, s, y)$ for all $\tau\in[0,t)$ and $y\in\mathbb{R}^n$. Consequently, 
\[
	w(x, t) \leq \inf \left\lbrace I(\tau, s, y): \tau\in[0,t), y \in \R^n \right\rbrace.
\]

On the other hand, let $\varepsilon>0$. By the definition of $w(x, t)$, there exists $\tilde{\tau}_\varepsilon \in [0, t)$ and $\tilde{z}_\varepsilon \in \R^n$ such that 
\begin{equation}\label{eqn:wegz}
w(x, t)+\varepsilon \geq \left(t-\tilde{\tau}_\varepsilon\right)\overline{L}\left(\frac{x-\tilde{z}_\varepsilon}{t-\tilde{\tau}_\varepsilon}\right)+f(\tilde{z}_\varepsilon, \tilde{\tau}_\varepsilon).    
\end{equation}
\begin{enumerate}
    \item If $s\leq \tilde{\tau}_\varepsilon$, then the right-hand side of \eqref{eqn:wegz} equals $I(\tilde{\tau}_\varepsilon, s, \tilde{z}_\varepsilon)$, and we obtain
    \begin{equation}\label{eqn:wgeqiz}
    w(x+t) +\varepsilon \geq I(\tilde{\tau}_\varepsilon,s, \tilde{z}_\varepsilon).
    \end{equation}
    \item If $s>\tilde{\tau}_\varepsilon$, define 
	\begin{equation*}
		y_\ep = \frac{s-\tilde{\tau}_\varepsilon}{t-\tilde{\tau}_\varepsilon}x+\left(1-\frac{s-\tilde{\tau}_\varepsilon}{t-\tilde{\tau}_\varepsilon}\right)\tilde{z}_\varepsilon, 
		\qquad \text{then}\qquad 
		\frac{x-y_\ep}{t-s}=\frac{x-\tilde{z}_\varepsilon}{t-\tilde{\tau}_\varepsilon}=\frac{y_\ep-\tilde{z}_\varepsilon}{s-\tilde{\tau}_\varepsilon}. 
	\end{equation*}    
	Since $s>\tilde{\tau}_\varepsilon$, we have
	\begin{align}
		I(\tilde{\tau}_\ep, s, y_\ep)
			&=
		\left(t-s\right)\overline{L}\left(\frac{x-y_\ep}{t-s}\right)+w(y_\ep, s) \nonumber \\
			&\leq 
		(t-s) \overline{L}\left(\frac{x-y_\ep}{t-s}\right) +(s-\tilde{\tau}_\ep) \overline{L}\left(\frac{y_\ep-\tilde{z}_\ep}{s-\tilde{\tau}_\ep}\right) +f\left(\tilde{z}_\ep, \tilde{\tau}_\ep \right) \nonumber \\ 
			&= (t-\tilde{\tau}_\ep) \overline{L}\left(\frac{x-\tilde{z}_\ep}{t-\tilde{\tau}_\ep}\right) +f\left(\tilde{z}_\ep, \tilde{\tau}_\ep\right) \leq w(x,t) + \varepsilon \label{eqn:eqn:wgeqiy},
	\end{align}
    where the second line follows from the definition of $w(y_\ep, s)$.
\end{enumerate}
From \eqref{eqn:wgeqiz} and \eqref{eqn:eqn:wgeqiy}, for every $\ep>0$, we can find $\tau_\ep \in [0, t)$ and $y_\ep \in \R^n$ such that $w(x, t) +\ep \geq I(\tau_\ep, s, y_\ep)$. Therefore, it follows that
\[
	w(x, t) \geq \inf \left\lbrace I(\tau, s, y): \tau\in[0,t), y \in \R^n \right\rbrace
\]
and the proof is complete. 
\end{enumerate}
\end{proof}

Before proving Proposition \ref{prop:ValueU}, we show that, as a consequence of the Dynamic Programming Principle, $w$ is uniformly continuous on $\mathbb{R}^n \times [0,\infty)$ under assumptions \ref{itm:A1}--\ref{itm:A5}, and Lipschitz continuous on $\mathbb{R}^n \times [0,\infty)$ under assumptions \ref{itm:A1}--\ref{itm:A6}.

\begin{prop}\label{prop:ocfcont} Assume \ref{itm:A1}--\ref{itm:A5}.
\begin{enumerate}[leftmargin=0.7cm]
    \item[$\mathrm{(a)}$] The function $w$ defined in \eqref{eqn:ocfw} is uniformly continuous on $\mathbb{R}^n \times [0, \infty)$. In particular, for any $t \geq 0$, the map $x \mapsto w(x,t)$ is uniformly continuous, and for any $x \in \mathbb{R}^n$, the map $t \mapsto w(x,t)$ is Lipschitz continuous, where the Lipschitz constant depends only on $\left\|Dg\right\|_{L^\infty(\mathbb{R}^n)}, \overline{L}(0), K_0$.
    \item[$\mathrm{(b)}$] Assume further that \ref{itm:A6} holds. Then $w$ is Lipschitz continuous on $\mathbb{R}^n \times [0, \infty)$, where $\mathrm{Lip}(w)$ depends only on $\left\|Dg\right\|_{L^\infty(\mathbb{R}^n)}, \left\|Db\right\|_{L^\infty(\mathbb{R}^n \times \partial \Om)},  \overline{L}(0), K_0$.
\end{enumerate}
\end{prop}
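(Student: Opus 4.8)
The plan is to establish two facts and then combine them: (i) the functions $x\mapsto w(x,t)$ are uniformly continuous with a modulus $\omega_f$ independent of $t$ (and Lipschitz with a constant $C_f$ independent of $t$ under \ref{itm:A6}); and (ii) for each fixed $x$, the map $t\mapsto w(x,t)$ is Lipschitz with a constant $C_t$ depending only on $\mathrm{Lip}(g),\overline{L}(0),K_0$. Granting (i)--(ii), for all $(x,t),(\tilde x,\tilde t)$ one has $|w(x,t)-w(\tilde x,\tilde t)|\le\omega_f(|x-\tilde x|)+C_t|t-\tilde t|$ (resp.\ $\le C_f|x-\tilde x|+C_t|t-\tilde t|$), which is exactly assertion (a) (resp.\ (b)). Fact (i) is immediate from the definition \eqref{eqn:ocfw}: for $t>0$, pick an $\varepsilon$-optimal pair $(\tau_\varepsilon,y_\varepsilon)$ for $w(x,t)$; the translate $(\tau_\varepsilon,\,y_\varepsilon+\tilde x-x)$ is admissible for $w(\tilde x,t)$ and leaves the $\overline{L}$-term unchanged, so $w(\tilde x,t)-w(x,t)\le\varepsilon+|f(y_\varepsilon+\tilde x-x,\tau_\varepsilon)-f(y_\varepsilon,\tau_\varepsilon)|\le\varepsilon+\omega_f(|x-\tilde x|)$ by \eqref{eqn:fucinx} (resp.\ $\le\varepsilon+C_f|x-\tilde x|$ by \eqref{eqn:flipinx}); let $\varepsilon\to0$ and exchange the roles of $x,\tilde x$. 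The case $t=0$ is the uniform continuity of $g$.

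For (ii) I would first record the elementary ``growth from the initial slice'' bound
\[
g(x)-\Big(K_0+\tfrac12\mathrm{Lip}(g)^2\Big)t\ \le\ w(x,t)\ \le\ \overline{L}(0)\,t+g(x),\qquad (x,t)\in\mathbb{R}^n\times[0,\infty):
\]
the right inequality is \eqref{eqn:ocfw} with $\tau=0,\ y=x$, and the left one follows by bounding $(t-\tau)\overline{L}\big(\tfrac{x-y}{t-\tau}\big)\ge\tfrac{|x-y|^2}{2t}-K_0t$ and $f(y,\tau)\ge g(y)\ge g(x)-\mathrm{Lip}(g)|x-y|$ via \eqref{eq:Hbar-Lbar} and \eqref{eqn:ftinc}, then minimizing in $|x-y|$. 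In particular $|w(x,t)-g(x)|\le C_1t$ with $C_1=C_1(\mathrm{Lip}(g),\overline{L}(0),K_0)$. Next, for $0<s<t$, the upper increment bound $w(x,t)\le(t-s)\overline{L}(0)+w(x,s)$ follows by taking $\tau=0,\ y=x$ in the Dynamic Programming Principle \eqref{eq:DPPHbar} with splitting time $s$. The remaining, and crucial, step is the lower increment bound $w(x,s)-w(x,t)\le C(t-s)$, which I would prove by distinguishing $s\le t/2$ from $s>t/2$.

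If $s\le t/2$, then $t\le2(t-s)$ and the growth bound alone gives $w(x,s)-w(x,t)\le|w(x,s)-g(x)|+|g(x)-w(x,t)|\le C_1(s+t)\le 3C_1(t-s)$. If $s>t/2$, take an $\varepsilon$-optimal pair $(\tau_\varepsilon,y_\varepsilon)$ for $w(x,t)$ and set $\theta:=t-\tau_\varepsilon\in(0,t]$, $v_\varepsilon:=\tfrac{x-y_\varepsilon}{\theta}$. If $\theta\le s$, the pair $(s-\theta,\,y_\varepsilon)$ is admissible for $w(x,s)$, runs at the same velocity $v_\varepsilon$, and by \eqref{eqn:ftinc} has terminal cost $f(y_\varepsilon,s-\theta)\le f(y_\varepsilon,t-\theta)=f(y_\varepsilon,\tau_\varepsilon)$, so $w(x,s)\le\theta\,\overline{L}(v_\varepsilon)+f(y_\varepsilon,\tau_\varepsilon)\le w(x,t)+\varepsilon$. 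If $\theta>s$, then $\theta>s>t/2$, so $t/\theta<2$; dividing the near-optimality inequality by $\theta$ and using $w(x,t)\le\overline{L}(0)t+g(x)$, $f(y_\varepsilon,\tau_\varepsilon)\ge g(y_\varepsilon)\ge g(x)-\mathrm{Lip}(g)\theta|v_\varepsilon|$ and \eqref{eq:Hbar-Lbar} yields $\tfrac12|v_\varepsilon|^2-\mathrm{Lip}(g)|v_\varepsilon|\le 2|\overline{L}(0)|+K_0+o(1)$, hence $|v_\varepsilon|\le V^*$ for a constant $V^*=V^*(\mathrm{Lip}(g),\overline{L}(0),K_0)$ once $\varepsilon$ is small. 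Now test $w(x,s)$ with start time $0$, initial point $y':=x-s v_\varepsilon$ and velocity $v_\varepsilon$: $w(x,s)\le s\,\overline{L}(v_\varepsilon)+g(y')$. Subtracting $w(x,t)+\varepsilon\ge\theta\,\overline{L}(v_\varepsilon)+g(y_\varepsilon)$ and using $(s-\theta)\overline{L}(v_\varepsilon)\le(\theta-s)K_0\le(t-s)K_0$ (by \eqref{eq:Hbar-Lbar}) together with $|g(y')-g(y_\varepsilon)|\le\mathrm{Lip}(g)|y'-y_\varepsilon|=\mathrm{Lip}(g)(\theta-s)|v_\varepsilon|\le\mathrm{Lip}(g)V^*(t-s)$ gives $w(x,s)-w(x,t)\le(K_0+\mathrm{Lip}(g)V^*)(t-s)+\varepsilon$; let $\varepsilon\to0$. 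Combined with the upper increment bound and the case $s=0$ (the growth bound), this is (ii), with $C_t$ depending only on $\mathrm{Lip}(g),\overline{L}(0),K_0$.

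The hard part is precisely the lower increment bound in the regime $\theta>s$, i.e.\ when the $\varepsilon$-optimal trajectory for $w(x,t)$ uses strictly more of the available time than $s$. A naive ``truncate that trajectory at time $s$'' comparison loses a factor $t/\theta$ in the velocity bound — uncontrolled when $\theta$ is small — and, if one matched the two trajectories at the wall, would only produce a modulus of continuity of $\overline{b}$, which is too weak for (a). The dichotomy $s\le t/2$ versus $s>t/2$ is what makes the argument work: for $s\le t/2$ the two-sided estimate $|w(\cdot,\sigma)-g|\le C_1\sigma$ already suffices since $t$ and $t-s$ are comparable, while for $s>t/2$ the effective time $\theta$ is automatically $>t/2$, so the velocity bound involves only $\mathrm{Lip}(g),\overline{L}(0),K_0$, and steering the comparison path back to time $0$ replaces the terminal cost $\overline{b}(y')$ by $g(y')$, keeping $\overline{b}$ out of the estimate entirely. (The DPP is used only for the clean upper increment bound; the lower bound is read off directly from the representation \eqref{eqn:ocfw}.)
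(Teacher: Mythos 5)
Your proposal is correct. Part of it coincides with the paper's proof: the spatial estimate (translating a near-optimal pair $(\tau_\varepsilon,y_\varepsilon)$ by $\tilde x-x$, which leaves the $\overline{L}$-term invariant and reduces everything to the modulus/Lipschitz constant of $f$ in $x$) and the two-sided bound $|w(x,t)-g(x)|\le Ct$ are exactly the paper's Steps, the only cosmetic difference being that you obtain the lower bound $w(x,t)\ge g(x)-(K_0+\tfrac12\mathrm{Lip}(g)^2)t$ by completing the square in $|x-y|$ rather than by integrating $Dg$ along the straight line and invoking the Fenchel inequality $Dg\cdot v\le \overline{L}(v)+\overline{H}(Dg)$; these are equivalent. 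Where you genuinely diverge is the time-Lipschitz estimate. The paper compares $w(x,t)$ with $w(x,t-s)$: it splits the DPP infimum \eqref{eq:DPPHbar} at the intermediate time $s$ into the two branches $A$ (trajectories reaching the slice $\{t=s\}$, handled by replacing $w(\cdot,s)$ with $g$ at cost $Cs$) and $B$ (trajectories with $\tau\ge s$, handled by the time shift $\tau\mapsto\tau-s$ and the monotonicity \eqref{eqn:ftinc}), and proves matching inequalities in both directions. You instead compare $w(x,s)$ with $w(x,t)$ directly from the representation \eqref{eqn:ocfw}, using the DPP only for the trivial upper increment, and you control the lower increment through a dichotomy on $s\lessgtr t/2$ and on whether the effective running time $\theta=t-\tau_\varepsilon$ of a near-minimizer exceeds $s$; the key new ingredient is the a priori velocity bound $|v_\varepsilon|\le V^*(\mathrm{Lip}(g),\overline{L}(0),K_0)$ valid when $\theta>t/2$, together with the observation that steering the comparison path back to $\tau=0$ replaces the terminal cost $\overline{b}$ by the Lipschitz datum $g$ (essential for part (a), where $b$ is merely BUC). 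Each step of your argument checks out — in particular the admissibility of $(s-\theta,y_\varepsilon)$ for $w(x,s)$ when $\theta\le s$ together with $f(y_\varepsilon,s-\theta)\le f(y_\varepsilon,\tau_\varepsilon)$, and the bounds $(s-\theta)\overline{L}(v_\varepsilon)\le K_0(t-s)$ and $|y'-y_\varepsilon|=(\theta-s)|v_\varepsilon|\le V^*(t-s)$ — and all constants depend only on the quantities allowed by the statement. The paper's route is somewhat shorter because the DPP decomposition does the case analysis for it; yours is more self-contained in that it reads the regularity off the variational formula with minimal use of the DPP.
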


\begin{proof}[Proof of Proposition \ref{prop:ocfcont}]
We present the proof of $\mathrm{(a)}$ only, as the proof of  $\mathrm{(b)}$ follows the same steps, with the additional consideration of tracking the Lipschitz constant of $f$. 

We first show that $x\mapsto w(x,t)$ is uniformly continuous for $t\geq 0$. Let $x, \tilde{x}\in \mathbb{R}^n$. If $t=0$ then 
\begin{equation*}
	\left|w(x, 0)-w(\tilde{x}, 0)\right|=\left|g(x)-g(\tilde{x})\right| \leq \left\|Dg\right\|_{L^\infty\left(\mathbb{R}^n\right)} |x -\tilde{x}|. 
\end{equation*}
If $t>0$, for every $\varepsilon>0$, there exists $y_\varepsilon \in \mathbb{R}^n$ and $\tau_\varepsilon \in [0, t)$ such that 
 \begin{align*}
 	 (t-\tau_\varepsilon)\overline{L}\left(\frac{x-y_\varepsilon}{t-\tau_\varepsilon}\right) + f\left(y_\varepsilon, \tau_\varepsilon\right)\leq  w(x, t) + \varepsilon,
 \end{align*}
 and by the definition of $w$,
 \begin{align*}
 	w(\tilde{x},t) 
 	\leq 
 	(t-\tau_\varepsilon)\overline{L}\left(\frac{\tilde{x} - (\tilde{x}-x+y_\varepsilon)}{t-\tau_\varepsilon}\right)  + f(\tilde{x}-x+y_\varepsilon, \tau_\varepsilon). 
 \end{align*}
 Therefore
 \begin{align*}
 	w(\tilde{x}, t)-w(x, t) \leq f(\tilde{x}-x+y_\varepsilon, \tau_\varepsilon) - f(y_\varepsilon, \tau_\varepsilon) + \varepsilon \leq \omega_f(|\tilde{x} - x|) + \varepsilon, 
 \end{align*}
where $\omega_f$ is as in \eqref{eqn:fucinx}. Since $\varepsilon>0$ is arbitrary, we have $w(\tilde{x}, t)-w(x, t) \leq \omega_f \left(\left|\tilde{x}-x\right|\right)$. By a symmetric argument, we have $\left|w(x, t)-w(\tilde{x}, t)\right| \leq \omega_f \left(\left|\tilde{x}-x\right|\right)$. 
Thus, $x\mapsto w(x, t)$ is uniformly continuous for all $t \geq 0$. \medskip

Next, we show that $t\mapsto w(t,x)$ is uniformly continuous for all $x \in \mathbb{R}^n$. 
Let $y=x$ and $\tau = 0$ in \eqref{eqn:ocfw}. Then
\begin{align}\label{eq:g1}
	w(x,t) \leq t\overline{L}(0) + g(x).
\end{align}
Since $g \in \mathrm{Lip}(\mathbb{R}^n)$ by \ref{itm:A5} and $\overline{H}$ is continuous, we have $\overline{H}(Dg(x)) \leq \tilde{C}$ for a.e. $x \in \mathbb{R}^n$, where
\begin{equation}\label{eq:tildeC}
\tilde{C} = \frac{1}{2} \|Dg\|_{L^\infty(\R^n)}^2 + K_0
\end{equation}
and $K_0$ is as given in \eqref{eq:Hbar-Lbar}. Let $\tau\in [0,t)$ and $y\in \R^n$. Define $\gamma(s) = y+\frac{x-y}{t-\tau}(s-\tau)$ for $s\in [\tau, t]$
be the straight line connecting $y$ to $x$ in time $s\in[\tau,t]$. By a smoothing argument (or using \cite[Proposition 2.5]{ishii_asymptotic_2008}, which ensures that $s \mapsto g(\gamma(s))$ is absolutely continuous), we obtain
\begin{align*}
	g(x) - g(y) 
	&= \int_\tau^t Dg(\gamma(s))\cdot \dot{\gamma}(s)\; \\
	&\leq \int_{\tau}^t\Big( \overline{L}(\dot{\gamma}(s))\;ds + \overline{H}(Dg(\gamma(s))) \Big)\;ds 
	\leq 
	(t-\tau)\left( \tilde{C} + \overline{L}\left(\frac{x-y}{t-\tau}\right) \right)\;. 
\end{align*}
Rearranging the terms and using $g(y)\leq f(y,\tau)$ from \eqref{eqn:ftinc}, we obtain 
\begin{align*}
    g(x) \leq (t-\tau)\,\overline{L}\!\left(\tfrac{x-y}{t-\tau}\right) + f(y,\tau) + \tilde{C}(t-\tau).
\end{align*}
Since $\tau \in [0,t)$ and $y \in \mathbb{R}^n$ are arbitrary, it follows that  
\begin{equation}\label{eq:g2}
    g(x) \leq w(x,t) \leq \tilde{C}t.
\end{equation}
Combining \eqref{eq:g1} and \eqref{eq:g2}, we obtain
\begin{equation}\label{eq:g3}
    |w(x,t) - g(x)| \leq Ct,
\end{equation}
where $C = \max \left\{ |\overline{L}(0)|, \tfrac{1}{2}\,\|Dg\|^2_{L^\infty(\mathbb{R}^n)}+K_0 \right\}$. In other words, $t\mapsto w(x,t)$ is Lipschitz at $t=0$.

For $t>0$, fix $s\in(0,t)$ and split the infimum in \eqref{eq:DPPHbar} over $\tau\in[0,s)$ and $\tau\in[s,t)$, giving  
\begin{equation}\label{eq:minAB}
	w(x,t) = \min\{A,B\}, 
\end{equation}
where 
\begin{align*}
	A &:= \inf_{\tau\in [0, s), \, y\in \R^n} \left\{(t-s) \overline{L}\left(\frac{x-y}{t-s}\right) + w(y, s)\right\} 
    =\inf_{y\in \R^n} \left\{(t-s) \overline{L}\left(\frac{x-y}{t-s}\right) + w(y, s)\right\},   \\
    B &:= \inf_{\tau\in [s, t), \, y\in \R^n} \left\{(t-\tau) \overline{L}\left(\frac{x-y}{t-\tau}\right) + f(y, \tau)\right\}.
\end{align*}
If $w(x,t) =A$, then by \eqref{eq:g3},    \begin{equation}\label{eqn:weqA}
    \begin{aligned}
     w(x, t) &= \inf_{y\in \R^n} \left\{(t-s) \overline{L}\left(\frac{x-y}{t-s}\right) + g(y)+ w(y, s)-g(y)\right\}\\
     &\geq \inf_{y\in \R^n} \left\{(t-s) \overline{L}\left(\frac{x-y}{t-s}\right) + g(y)\right\}-Cs\\
     &\geq \inf_{\tilde{\tau} \in[0,t-s), \, y\in \R^n} \left\{(t-s-\tilde{\tau}) \overline{L}\left(\frac{x-y}{t-s-\tilde{\tau}}\right) + f\left(y,\tilde{\tau}\right)\right\}-Cs\\
     &= w(x, t-s)-Cs,
    \end{aligned}   
    \end{equation}
    where
    $C:=\max\left\{\left|\overline{L}(0)\right|,\frac{\left\|Dg\right\|^2_{L^\infty(\R^n)}}{2} + K_0\right\}$ and the fourth line follows from the definition of $w(x, t-s)$.
    If $w(x,t) =B$, then
    \begin{equation}\label{eqn:weqB}
        \begin{aligned}
    w(x, t)&= \inf_{\tau\in [s, t), \, y\in \R^n} \left\{(t-\tau) \overline{L}\left(\frac{x-y}{t-\tau}\right) + f(y, \tau)\right\}\\
    &= \inf_{\tilde{\tau}\in[0, t-s),\, y\in \mathbb{R}^n} \left\{(t - s- \tilde{\tau})\, \overline{L}\left(\frac{x - y}{t -s- \tilde{\tau}}\right) + f(y, \tilde{\tau}+s)\right\}\\
    &\geq \inf_{\tilde{\tau}\in[0, t-s),\, y\in \mathbb{R}^n} \left\{(t - s- \tilde{\tau})\, \overline{L}\left(\frac{x - y}{t -s- \tilde{\tau}}\right) + f(y, \tilde{\tau})\right\}\\
    &= w(x, t-s),
    \end{aligned}    
    \end{equation}
    where the third line follows from \eqref{eqn:ftinc}.
    Combining \eqref{eqn:weqA} and \eqref{eqn:weqB}, we conclude
    \begin{equation}\label{eqn:wlipintg}
     w(x, t) \geq w(x, t-s) -Cs,
    \end{equation}
    where
    $C:=\max\left\{\left|\overline{L}(0)\right|,\frac{\left\|Dg\right\|^2_{L^\infty(\R^n)}}{2} + K_0\right\}$.
    
For the other direction of \eqref{eqn:wlipintg}, by the definition of $w$, we have
    \begin{equation}\label{eqn:wtsbreak}
    w(x, t-s) = \min \left\{A' , B'\right\}  ,  
    \end{equation}
    where
    \begin{align*}
    	A'&:=\inf_{y\in \R^n} \left\{(t-s) \overline{L}\left(\frac{x-y}{t-s}\right) +g(y)\right\},\\
    	B'&:=\inf_{\tilde{\tau} \in(0,t-s), \, y\in \R^n} \left\{(t-s-\tilde{\tau}) \overline{L}\left(\frac{x-y}{t-s-\tilde{\tau}}\right) + f\left(y,\tilde{\tau}\right)\right\}.
    \end{align*}    	
    From \eqref{eq:minAB}, we have $w(x, t) \leq A$, that is,
    \begin{equation}\label{eqn:wleqAp}
    \begin{aligned}
    w(x, t) & \leq  \inf_{y\in \R^n} \left\{(t-s) \overline{L}\left(\frac{x-y}{t-s}\right) + g(y)+w(y, s)-g(y)\right\}\\
    &\leq \inf_{y\in \R^n} \left\{(t-s) \overline{L}\left(\frac{x-y}{t-s}\right) + g(y)\right\}+Cs
    =A'+Cs
    \end{aligned}        
    \end{equation}
    where the second line follows from \eqref{eq:g3}.
    Similarly, from \eqref{eq:minAB} we also have $w(x, t) \leq B$, that is,
    \begin{equation}\label{eqn:wleqBp}
    \begin{aligned}
    w(x, t) & \leq \inf_{\tau\in [s, t),\, y\in \R^n} \left\{(t-\tau) \overline{L}\left(\frac{x-y}{t-\tau}\right) + f(y, \tau)\right\}\\
    &=\inf_{\tilde{\tau}\in[0, t-s),\, y\in \mathbb{R}^n} \left\{(t - s- \tilde{\tau})\, \overline{L}\left(\frac{x - y}{t -s- \tilde{\tau}}\right) + f(y, \tilde{\tau}+s)\right\}\\
    &\leq \inf_{\tilde{\tau}\in(0, t-s),\, y\in \mathbb{R}^n} \left\{(t - s- \tilde{\tau})\, \overline{L}\left(\frac{x - y}{t -s- \tilde{\tau}}\right) + f(y, \tilde{\tau}+s)\right\}\\
    &= \inf_{\tilde{\tau}\in(0, t-s),\, y\in \mathbb{R}^n} \left\{(t - s- \tilde{\tau})\, \overline{L}\left(\frac{x - y}{t -s- \tilde{\tau}}\right) + f(y, \tilde{\tau})\right\} = B',
    \end{aligned}    
    \end{equation}
    where we exclude the case $\tilde{\tau}=0$ in the third line, and the fourth line follows from the definition of $f$, namely $f(y, \tilde{\tau}+s)=f(y, \tilde{\tau})=\overline{b}(\tilde{y})$ for $\tilde{\tau}, s >0$. 
    Combining \eqref{eqn:wtsbreak}, \eqref{eqn:wleqAp} and \eqref{eqn:wleqBp}, we have
    \begin{equation}\label{eqn:wlipintl}
    \begin{aligned}
    w(x, t) &\leq \min\left\{A', B'\right\} +Cs = w(x, t-s)+Cs.
    \end{aligned}        
    \end{equation}
    Hence, combining \eqref{eqn:wlipintg} and \eqref{eqn:wlipintl}, we conclude that for any $t> s >0$,
    \[
    \left| w(x, t) -w(x, t-s)\right| \leq  C s,
    \]
    where $C:=\max\left\{\left|\overline{L}(0)\right|,\frac{\left\|Dg\right\|^2_{L^\infty(\R^n)}}{2} + K_0\right\}$. 
\end{proof}

\begin{proof}[Proof of Proposition \ref{prop:ValueU}] 
We first show that $w$ defined in \eqref{eqn:ocfw} is a subsolution to \eqref{eqn:PDE_limit}. 

Let $\varphi\in C^1(\R^n\times (0,\infty))$ be such that $w-\varphi$ attains its maximum at $(x_0,t_0) \in \R^n\times (0,\infty)$ and $(w-\varphi)(x_0,t_0) = 0$. We aim to show that
\[
\max\big\{w(x_0,t_0) - \overline{b}(x_0), \partial_t \varphi(x_0,t_0) + \overline{H}\left( D\varphi(x_0,t_0)\right)\big\} \leq 0.
\]
By Lemma \ref{lem:DPPHbar}, we know $w(x_0,t_0) \leq \overline{b}(x_0)$. Hence, it suffices to prove
\[
 \partial_t \varphi(x_0,t_0) + \overline{H}\left( D\varphi(x_0,t_0)\right) \leq 0.
\]
Let $v\in \R^n$ and $s\in (0, t_0)$. Choose $y=x_0+v(s-t_0) $ and $\tau\in[0, s)$ in \eqref{eq:DPPHbar} to obtain
\begin{align*}
    \varphi(x_0,t_0)=w(x_0,t_0) 
    &\leq w(x_0+v(s-t_0), s) + (t_0-s) \overline{L}\left(v\right) \\
    &\leq \varphi(x_0+v(s-t_0), s) + (t_0-s) \overline{L}\left(v\right).
\end{align*}
Therefore,
\begin{align*}
    \frac{\varphi(x_0,t_0) - \varphi(x_0+v(s-t_0), s)}{t_0-s} 
    \leq \overline{L}\left(v\right)
    \qquad\Longrightarrow\qquad 
    D\varphi(x_0,t_0)\cdot v + \partial_t \varphi(x_0,t_0) \leq \overline{L}\left(v\right)
\end{align*}
by letting $s\to t_0$. We deduce 
\begin{align*}
    \partial_t \varphi(x_0,t_0) + 
    \left(
    D\varphi(x_0,v_0)\cdot v - \overline{L}(v) 
    \right) \leq 0 
    \qquad \Longrightarrow \qquad
    \partial_t \varphi(x_0,t_0) + \overline{H}\left(D\varphi(x_0,t_0)\right) \leq 0
\end{align*}
since $v\in \mathbb{R}^n$ is arbitrary. \medskip

Next, we show that $w$ is a supersolution to \eqref{eqn:PDE_limit}. Let $\psi\in C^1(\R^n\times (0,\infty))$ be such that $w-\psi$ attains a minimum at $(x_0,t_0) \in \R^n\times (0,\infty)$ with $(w-\psi)(x_0,t_0) = 0$. We aim to show that 
\begin{equation*}
    \max\big\{
    w(x_0,t_0) - \overline{b}(x_0), \partial_t\psi(x_0,t_0) + H(x_0,D\psi(x_0,t_0))
    \big\} \geq 0. 
\end{equation*}
By Lemma \ref{lem:DPPHbar}, we know $w(x_0,t_0) \leq \overline{b}(x_0)$. If $w(x_0,t_0) = \overline{b}(x_0)$, we are done. Otherwise, assume $w(x_0,t_0) < \overline{b}(x_0)$. For every $m \in \mathbb{N}$, there exist $\tau_m \in [0,t_0)$ and $y_m \in \R^n$ such that
\begin{equation}\label{eqn:appu}
    w(x_0,t_0) \leq (t_0-\tau_m) \overline{L}\left(\frac{x_0-y_m}{t_0-\tau_m}\right)+f(y_m, \tau_m)  \leq w(x_0,t_0) + \frac{1}{m^2}.    
\end{equation}
By Lemma \ref{lem:DPPHbar}, there exists $m_0 \in \mathbb{N}$ such that for all $m > m_0$,
\begin{equation}\label{eqn:unifbdtaum}
    0 \leq \tau_m \leq t_0 - \frac{1}{m_0} < t_0 - \frac{1}{m} < t_0,
\end{equation}
Moreover, for $m> m_0$,
\begin{equation}\label{eqn:xydist}
    \frac{|x_0-y_m|}{t_0-\tau_m}\leq C
\end{equation}
for some constant $C=C\left(x_0, t_0, H, \|g\|_{L^\infty(\R^n)} \right)>0$. 
For $m>m_0$, define 
\begin{equation*}
	z_m = (1-\lambda) x_0 + \lambda y_m, \qquad\text{where}\;\lambda = \frac{1}{m}\cdot \frac{1}{t_0-\tau_m}. 
\end{equation*}
Then 
\begin{equation}\label{eqn:slveleq}
m\left(x_0-z_m\right)=\frac{x_0-y_m}{t_0-\tau_m}=\frac{z_m-y_m}{t_0-\frac{1}{m}-\tau_m},
\end{equation}
and from \eqref{eqn:xydist}, it follows that
\begin{equation}\label{eqn:xzdist}
    |x_0-z_m| \leq \frac{C}{m}
\end{equation}
for some constant $C=C\left(x_0, t_0, H, \|g\|_{L^\infty(\R^n)} \right)>0$. By \eqref{eqn:ocfw} and \eqref{eqn:unifbdtaum}, we have
\begin{align*}
 \psi\left(z_m,t_0-\frac{1}{m} \right) \leq w\left(z_m,t_0-\frac{1}{m} \right)
 	&\leq \left(t_0-\frac{1}{m}-\tau_m\right) \overline{L}\left(\frac{z_m-y_m}{t_0-\frac{1}{m}-\tau_m}\right)+f(y_m, \tau_m) \\
 	&\leq -\frac{1}{m}\overline{L}\left(\frac{x_0-y_m}{t_0-\tau_m}\right) + w(x_0,t_0) + \frac{1}{m^2},
\end{align*}
thanks to \eqref{eqn:slveleq} and \eqref{eqn:appu}. We deduce that 
\begin{equation}\label{eqn:psi_ineq}
\begin{aligned}
    \frac{1}{m} \overline{L}\left(m(x_0-z_m)\right) + \psi\left(z_m,t_0-\frac{1}{m} \right)
    \leq w(x_0, t_0) +\frac{1}{m^2} = \psi(x_0, t_0)+\frac{1}{m^2}.
\end{aligned}
\end{equation}
Rearranging terms in~\eqref{eqn:psi_ineq}, we get
\begin{align*}
	- \frac{1}{m^2} 
	& \leq 
	 \psi (x_0,t_0) -\psi\left(z_m,t_0-\frac{1}{m} \right) - \frac{1}{m} \overline{L}\left(m(x_0-z_m)\right) \\ 
	& = \int_{t_0-\frac{1}{m}}^{t_0} 
    \Bigg[
        \partial_t\psi \left(
            z_m + m(x_0-z_m)\left(s-t_0+\tfrac{1}{m}\right),\, s
        \right) \\
    &\qquad \qquad\qquad+ D\psi \left(
            z_m + m(x_0-z_m)\left(s-t_0+\tfrac{1}{m}\right),\, s
        \right) \cdot m(x_0-z_m) - \overline{L}\big(m(x_0-z_m)\big)
    \Bigg] \, ds  \\
    &\leq 
    \int_{t_0-\frac{1}{m}}^{t_0} 
    \Bigg[
        \partial_t\psi \Big(
            z_m + m(x_0-z_m)\big(s-t_0+\tfrac{1}{m}\big),\, s
        \Big)\\
        &\qquad \qquad\qquad+ \overline{H}\Big( D\psi \Big(
            z_m + m(x_0-z_m)\left(s-t_0+\tfrac{1}{m}\right),\, s \Big) \Big)
    \Bigg] \, ds .
\end{align*}
Note that for $s \in \left[t_0-\frac{1}{m}, t_0\right]$, 
\begin{equation}\label{eqn:xsldist}
    \begin{aligned}
    \left|z_m + m(x_0-z_m)\left(s-t_0+\tfrac{1}{m}\right)-x_0\right|=&|(x_0-z_m)m(s-t_0)|
    \leq |x_0-z_m| \leq  \frac{C}{m}.
    \end{aligned}
\end{equation}
By the continuity of $\partial_t \psi$, $D\psi$, and $\overline{H}$ at $(x_0, t_0)$, and using \eqref{eqn:xsldist}, we deduce that for $m > m_0$,
\begin{align*}
    -\frac{1}{m^2} \leq \int_{t_0-\frac{1}{m}}^{t_0} 
    \left(
        \partial_t\psi \left(x_0, t_0
        \right)+ \overline{H}\left( D\psi \left(
            x_0, t_0 \right) \right)+ \omega \left(\frac{1}{m}\right)
    \right)\, ds,
\end{align*}
where $\omega$ is a modulus of continuity depending on \( \partial_t \psi  (x_0, t_0) \), \( D\psi  (x_0, t_0) \), and \( \overline{H}(D\psi(x_0, t_0)) \).
Multiplying both sides by $m$ yields
\begin{align*}
    -\frac{1}{m} \leq m\int_{t_0-\frac{1}{m}}^{t_0} 
    \left(
        \partial_t\psi \left(x_0, t_0
        \right)+ \overline{H}\left( D\psi \left(
            x_0, t_0 \right) \right)+ \omega \left(\frac{1}{m}\right)
    \right)\, ds .
\end{align*}
Sending $m\to \infty$, we obtain $\partial_t \psi(x_0,t_0) + \overline{H}(D\psi(x_0,t_0)) \geq 0$.
\end{proof}

\subsection{Optimal control for $u^\varepsilon$}
We present the optimal control formula for $u^\varepsilon$. For $\varepsilon>0$, the solution $u^\varepsilon$ to \eqref{eqn:PDE_epsilon} admits the representation formula (see \cite{Ishii1989_dirichletHJ, mitake_large_2009}) 
\begin{align*}
    u^\varepsilon(x,t) = \inf\left\lbrace 
	    \int_\tau^t L\left( \frac{\xi(s)}{\varepsilon} ,\dot{\xi}(s)\right) \,ds
	    	+
	    	f_\varepsilon \left(\xi(\tau),\tau \right)
	    	:  
	    \tau \in [0, t], \xi\in \mathcal{C}(x,t;\overline{\Omega}_\varepsilon), (\xi(\tau), \tau) \in \partial Q^\varepsilon_t
    \right\rbrace, 
\end{align*}
for any $x\in \overline{\Omega}_\varepsilon$ and $t \geq 0$. The following proposition shows that for $t>0$, the case \(\tau = t\) can be excluded, which will simplify later arguments.


\begin{prop}\label{prop:uepocf}
Assume \ref{itm:A1}--\ref{itm:A5}. Let $\varepsilon > 0$, and $(x,t)\in \overline{\Om}_\ep\times (0,\infty)$. We have
\begin{align} \label{eqn:ocfue} 
    &u^\varepsilon(x,t) 
    = 
    \inf
    \left\lbrace 
	    \int_\tau^t L\left( \frac{\xi(s)}{\varepsilon} ,\dot{\xi}(s)\right) \,ds
	    	+
	    	f_\varepsilon \left(\xi(\tau),\tau \right)
	    	:  
	    \tau \in [0, t), \xi\in \mathcal{C}(x,t;\overline{\Omega}_\varepsilon), (\xi(\tau), \tau) \in \partial Q^\varepsilon_t
    \right\rbrace \nonumber \\
    &\quad = \inf_{\tau\in [0,t)}
    \left\lbrace
    	\varepsilon \int_{\frac{\tau}{\varepsilon}}^{\frac{t}{\varepsilon}} 
    		L\left(\gamma(s), \dot{\gamma}(s)\right)\;ds + 	
    		f_\varepsilon\left(\varepsilon\gamma\left(\tfrac{\tau}{\varepsilon}\right), \tau\right): 
    		\gamma \in 
    		\mathcal{C}\left(\tfrac{x}{\varepsilon}, \tfrac{t}{\varepsilon};\overline{\Omega}\right), 
    		\left(\gamma(\tfrac{\tau}{\varepsilon}), \tau\right) \in \partial Q_t
    \right\rbrace, 
\end{align}
where $Q^\varepsilon_t$ and $\partial Q^\varepsilon_t$ are defined in \eqref{eq:QTeps}. 
\end{prop}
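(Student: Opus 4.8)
The plan is to prove the two equalities in \eqref{eqn:ocfue} separately, starting from the known representation formula in which the exit time $\tau$ ranges over the \emph{closed} interval $[0,t]$. For the first equality the inequality ``$\geq$'' is automatic: restricting $\tau$ to $[0,t)$ only shrinks the admissible class, so its infimum is at least $u^\varepsilon(x,t)$. Hence the real content is that admitting $\tau=t$ does not lower the infimum, and the second equality is then a routine $\varepsilon$-rescaling.

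For the first equality I would examine the competitors with $\tau=t$. Such a triple is admissible only if $\xi(t)=x\in\partial\Omega_\varepsilon$ — if $x\in\Omega_\varepsilon$ there is no such competitor and nothing to prove — and in that case, the action integral over the degenerate interval $[t,t]$ vanishing, its cost equals $f_\varepsilon(x,t)=b(x,x/\varepsilon)$. To reach this value with $\tau<t$, fix $\delta\in(0,t)$, take $\tau:=t-\delta$ and the constant curve $\xi\equiv x\in\mathcal{C}(x,t;\overline{\Omega}_\varepsilon)$, so that $(\xi(t-\delta),t-\delta)\in\partial\Omega_\varepsilon\times(0,t)\subset\partial Q^\varepsilon_t$; note it is precisely $t>0$ that forces $t-\delta\in(0,t)$. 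Its cost is
\[
\int_{t-\delta}^{t}L\!\left(\tfrac{x}{\varepsilon},0\right)ds+f_\varepsilon(x,t-\delta)
=\delta\,L\!\left(\tfrac{x}{\varepsilon},0\right)+b\!\left(x,\tfrac{x}{\varepsilon}\right)
\leq K_0\,\delta+b\!\left(x,\tfrac{x}{\varepsilon}\right),
\]
using $f_\varepsilon(x,t-\delta)=b(x,x/\varepsilon)$ (valid since $t-\delta>0$) and the bound $L(\cdot,0)\leq K_0$ from \eqref{eqn:K_0L}. Letting $\delta\to0^+$ shows the infimum over $\tau\in[0,t)$ is $\leq b(x,x/\varepsilon)$, hence is not beaten by any $\tau=t$ competitor; combined with the known formula this gives the first equality in \eqref{eqn:ocfue}.

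For the second equality I would use the hyperbolic rescaling $\gamma(s):=\varepsilon^{-1}\xi(\varepsilon s)$ for $s\in[0,t/\varepsilon]$: this is a bijection between $\mathcal{C}(x,t;\overline{\Omega}_\varepsilon)$ and $\mathcal{C}(x/\varepsilon,t/\varepsilon;\overline{\Omega})$ (since $\varepsilon^{-1}\overline{\Omega}_\varepsilon=\overline{\Omega}$), under which $(\xi(\tau),\tau)\in\partial Q^\varepsilon_t$ if and only if $(\gamma(\tau/\varepsilon),\tau)\in\partial Q_t$, while the substitution $s\mapsto\varepsilon s$ in the action gives $\int_\tau^t L(\xi(s)/\varepsilon,\dot\xi(s))\,ds=\varepsilon\int_{\tau/\varepsilon}^{t/\varepsilon}L(\gamma(s),\dot\gamma(s))\,ds$ and $f_\varepsilon(\xi(\tau),\tau)=f_\varepsilon(\varepsilon\gamma(\tau/\varepsilon),\tau)$; the two infima then agree term by term. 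I do not expect a genuine obstacle here: the statement is essentially bookkeeping, the only point needing attention being that the constant-curve competitor exits through the lateral face $\partial\Omega_\varepsilon\times(0,t)$ rather than the initial slice (whence the hypothesis $t>0$), and that the exit constraint survives the rescaling — no compactness or viscosity-solution arguments enter, in contrast to Proposition \ref{prop:ValueU}.
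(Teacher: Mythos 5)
Your proof is correct, and for the only nontrivial step — showing that admitting $\tau = t$ does not lower the infimum when $x \in \partial\Omega_\varepsilon$ — you take a genuinely simpler route than the paper. The paper instead chooses points $y_k \in \partial\Omega_\varepsilon$ with $y_k \to x$ and $|y_k - x| \le M|t - \tau_k|$, builds piecewise-linear curves from $y_k$ to $x$, invokes the $C^1$-regularity of $\partial\Omega_\varepsilon$ together with the boundary-path modification lemma (their Lemma~A.1) to replace these by $C^1$ curves $\tilde{\xi}_k$ lying entirely on $\partial\Omega_\varepsilon$ with velocity bound $C_b M$, and only then estimates the action via \eqref{eqn:K_0L} and passes to the limit. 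Your constant competitor $\xi \equiv x$ with exit time $\tau = t-\delta$ sidesteps all of that: it is automatically admissible because $x$ is already on $\partial\Omega_\varepsilon$ (so $(x, t-\delta) \in \partial\Omega_\varepsilon \times (0,t) \subset \partial Q^\varepsilon_t$), its terminal cost $f_\varepsilon(x,t-\delta) = b(x,x/\varepsilon)$ is exactly the $\tau = t$ cost, and the running cost $\delta L(x/\varepsilon,0) \le K_0\delta$ vanishes. This avoids both the boundary-regularity hypothesis and Lemma~A.1 entirely in this proposition, which is a real simplification; the paper's machinery is needed elsewhere (e.g., Propositions~\ref{prop:mstarbound} and \ref{prop:sptm}, where one must connect two distinct nearby boundary points), but here the endpoints coincide and a constant curve suffices. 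Your treatment of the rescaling for the second equality matches the paper's implicit reasoning and is correct.
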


Note that the admissible paths in the optimal control formula for $u^\varepsilon$ are restricted on $\overline{\Omega}_\varepsilon$ or $\ol \Om$ after rescaling.

\begin{proof} Define
\begin{align*}
	A &:= 
    \left\lbrace 
	    \int_\tau^t L\left( \frac{\xi(s)}{\varepsilon} ,\dot{\xi}(s)\right) \,ds
	    	+
	    	f_\varepsilon \left(\xi(\tau),\tau \right)
	    	:  
	    \tau \in [0, t], \xi\in \mathcal{C}(x,t;\overline{\Omega}_\varepsilon), (\xi(\tau), \tau) \in \partial Q^\varepsilon_t
    \right\rbrace, \\ 
    B &:= 
    \left\lbrace 
	    \int_\tau^t L\left( \frac{\xi(s)}{\varepsilon} ,\dot{\xi}(s)\right) \,ds
	    	+
	    	f_\varepsilon \left(\xi(\tau),\tau \right)
	    	:  
	    \tau \in [0, t), \xi\in \mathcal{C}(x,t;\overline{\Omega}_\varepsilon), (\xi(\tau), \tau) \in \partial Q^\varepsilon_t
    \right\rbrace. 
\end{align*}
For $\varepsilon>0$, the solution $u^\varepsilon$ to \eqref{eqn:PDE_epsilon} admits the representation formula (see \cite{Ishii1989_dirichletHJ, mitake_large_2009}) $u^\varepsilon(x,t)= \inf A$ for any $x\in \overline{\Omega}_\varepsilon$ and $t \geq 0$. 
For any $t>0$ and $x\in \overline{\Om}_\ep$, we claim that $\inf A = \inf B$.

    Indeed, if $x\in \Omega_\varepsilon$ and $\tau=t$, there is no admissible path $\xi \in \mathcal{C}(x,t;\overline{\Omega}_\varepsilon)$ with $\xi(t)\in \partial\Omega_\varepsilon$. Hence, $A=B$ in this case. If $x \in \partial \Omega_\varepsilon$ and $\tau = t$, then for any path $\xi \in \mathcal{C}(x,t;\overline{\Omega}_\varepsilon)$ with $\xi(t) = x$, we obtain $f_\ep(x,t) \in A$. 

On the other hand, by the $C^1$-regularity of $\partial \Omega_\varepsilon$, there exist sequences $\{\tau_k\}_{k=1}^\infty \subset [0, t)$ and $\{y_k\}_{k=1}^\infty \subset \partial \Omega_\varepsilon$ such that 
    \[
    \tau_k \to t, \quad y_k \to x, \quad |y_k - x| \le M |t - \tau_k|
    \]
    for some constant $M > 0$. Without loss of generality, we may assume that $y_k$ and $x$ lie in the same connected component of $\partial \Omega_\varepsilon$. Consider the curve $\xi_k: [0, t] \to \R^n$ defined by 
    \[
    \xi_k (s) = \begin{cases}
			y_k, & \text{if } \, s \in [0, \tau_k),\\
            \frac{s-\tau_k}{t-\tau_k}(x-y_k)+y_k, & \text{if } \,  s\in [\tau_k, t],
		 \end{cases}
    \]
    which satisfies $\big\|\dot{\xi}\big\|_{L^\infty([0, t])} \leq M$. By \cite[Lemma A.1]{han_quantitative_2024}, we may modify $\xi_k$ into a $C^1$ path $\tilde{\xi}_k : [0,t] \to \partial\Omega_\varepsilon$ joining $y_k$ to $x$ with
    \[
    \tilde{\xi}_k(\tau_k)= y_k,\qquad \tilde{\xi}_k(t)=x, \qquad \left\|\dot{\tilde{\xi}}_k\right\|_{L^\infty([0, t])} \leq C_b M,
    \]
    for a constant $C_b>0$ depending only on $n$ and $\partial \Omega_\varepsilon$. By \eqref{eqn:K_0L}, we have
    \[
      -(t-\tau_k)K_0\leq \int_{\tau_k}^t L\left( \frac{\tilde{\xi}_k(s)}{\varepsilon} ,\dot{\tilde{\xi}}_k(s)\right) \,ds  \leq (t-\tau_k) \frac{C_b^2M^2}{2} + (t-\tau_k)K_0.
    \]
    Sending $k \to \infty$, we obtain
    \[
    \lim_{k\to \infty} \int_{\tau_k}^t L\left( \frac{\tilde{\xi}_k(s)}{\varepsilon} ,\dot{\tilde{\xi}}_k(s)\right) \,ds + \fep \left(\tilde{\xi}(\tau_k), \tau_k\right) =\fep (x,t),
    \]
    with
    \[
    \int_{\tau_k}^t L\left( \frac{\tilde{\xi}_k(s)}{\varepsilon} ,\dot{\tilde{\xi}}_k(s)\right) \,ds + \fep \left(\tilde{\xi}_k(\tau_k), \tau_k\right) \in B
    \]
    for each $k \in \mathbb{N}$. Hence, $\inf A=\inf B$.
\end{proof}

We state the Dynamic Programming Principle for $u^\varepsilon$ (see \cite[Theorem 6.1]{barles_introduction_2013}) and omit its proof.
\begin{lem} \label{lem:uepDPP}
Assume \ref{itm:A1}--\ref{itm:A5}. Let $\ep>0$ and let $u^\ep$ be the solution to \eqref{eqn:PDE_epsilon}. Then for any $t>s>0$ and $x\in \ol\Om_\ep$, we have
\begin{equation}\label{eq:DPP}
\begin{aligned}
    u^\ep(x,t) 
    &= \inf_{
    \substack{
        \tau\in [0,t), \xi\in \mathcal{C}(x,t;\overline{\Omega}_\varepsilon) \\
    \left(\xi (\tau), \tau\right) \in \partial Q^\varepsilon_t}}
    \left \lbrace 
     \int_{ \tau \vee s }^t L\left( \frac{\xi(r)}{\ep}, \dot{\xi}(r) \right) \, dr 
     + \chi_{\{s > \tau\}} \, u^\ep\left( \xi(s), s \right) 
     + \chi_{\{s\leq \tau\}} \, f_\ep\left( \xi(\tau), \tau \right) \right\rbrace .
\end{aligned}    
\end{equation}
\end{lem}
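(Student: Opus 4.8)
The plan is to prove the two inequalities in \eqref{eq:DPP} separately, mirroring the argument in the proof of Lemma~\ref{lem:DPPHbar}(d) but with absolutely continuous curves constrained to $\overline{\Omega}_\varepsilon$ in place of straight segments, and invoking the optimal control representation \eqref{eqn:ocfue} of Proposition~\ref{prop:uepocf} at each step. Fix $t>s>0$ and $x\in\overline{\Omega}_\varepsilon$. Call a pair $(\tau,\xi)$ \emph{admissible} if $\tau\in[0,t)$, $\xi\in\mathcal{C}(x,t;\overline{\Omega}_\varepsilon)$ and $(\xi(\tau),\tau)\in\partial Q^\varepsilon_t$, and write $\inf I_s$ for the infimum on the right-hand side of \eqref{eq:DPP} over such pairs, whose integrand is
\[
I_s(\tau,\xi)=\int_{\tau\vee s}^{t} L\!\left(\tfrac{\xi(r)}{\varepsilon},\dot\xi(r)\right)dr+\chi_{\{s>\tau\}}\,u^\varepsilon(\xi(s),s)+\chi_{\{s\le\tau\}}\,f_\varepsilon(\xi(\tau),\tau).
\]
Two elementary remarks will be used throughout: since $\xi([0,t])\subset\overline{\Omega}_\varepsilon$, the value $u^\varepsilon(\xi(s),s)$ is well defined; and if $(\tau,\xi)$ is admissible with $\tau<s$, then also $(\xi(\tau),\tau)\in\partial Q^\varepsilon_s$ (recall \eqref{eq:QTeps}: if $\tau=0$ then $\xi(0)\in\overline{\Omega}_\varepsilon$, and if $\tau\in(0,s)$ then $\xi(\tau)\in\partial\Omega_\varepsilon$), so the restriction $\xi|_{[0,s]}$ is admissible for the problem \eqref{eqn:ocfue} defining $u^\varepsilon(\xi(s),s)$.

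To prove $u^\varepsilon(x,t)\le\inf I_s$, fix an admissible $(\tau,\xi)$; I would show $u^\varepsilon(x,t)\le I_s(\tau,\xi)$. When $\tau\ge s$ this is immediate, since $I_s(\tau,\xi)$ then equals $\int_\tau^t L(\xi/\varepsilon,\dot\xi)\,dr+f_\varepsilon(\xi(\tau),\tau)$, one of the competitors for $u^\varepsilon(x,t)$ in \eqref{eqn:ocfue}. When $\tau<s$, fix $\eta>0$ and use \eqref{eqn:ocfue} at $(\xi(s),s)$ to pick a near-optimal pair $(\sigma,\zeta)$ with $\sigma\in[0,s)$, $\zeta\in\mathcal{C}(\xi(s),s;\overline{\Omega}_\varepsilon)$ and $(\zeta(\sigma),\sigma)\in\partial Q^\varepsilon_s$; then concatenate, setting $\tilde\xi:=\zeta$ on $[0,s]$ and $\tilde\xi:=\xi$ on $[s,t]$. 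Because $\zeta(s)=\xi(s)$, the curve $\tilde\xi$ is absolutely continuous, stays in $\overline{\Omega}_\varepsilon$, ends at $x$, and satisfies $(\tilde\xi(\sigma),\sigma)=(\zeta(\sigma),\sigma)\in\partial Q^\varepsilon_s\subset\partial Q^\varepsilon_t$, so $(\sigma,\tilde\xi)$ is a competitor for $u^\varepsilon(x,t)$; additivity of the action integral together with the choice of $(\sigma,\zeta)$ then yields $u^\varepsilon(x,t)\le\int_s^t L(\xi/\varepsilon,\dot\xi)\,dr+u^\varepsilon(\xi(s),s)+\eta=I_s(\tau,\xi)+\eta$. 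Letting $\eta\to0$ and taking the infimum over admissible $(\tau,\xi)$ finishes this direction.

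For $u^\varepsilon(x,t)\ge\inf I_s$, fix $\eta>0$ and use \eqref{eqn:ocfue} at $(x,t)$ to pick a near-optimal admissible pair $(\tilde\tau,\tilde\xi)$ with $\int_{\tilde\tau}^t L(\tilde\xi/\varepsilon,\dot{\tilde\xi})\,dr+f_\varepsilon(\tilde\xi(\tilde\tau),\tilde\tau)\le u^\varepsilon(x,t)+\eta$; I would show $I_s(\tilde\tau,\tilde\xi)\le u^\varepsilon(x,t)+\eta$, which suffices since then $\inf I_s\le u^\varepsilon(x,t)+\eta$. If $\tilde\tau\ge s$, then $I_s(\tilde\tau,\tilde\xi)$ equals the chosen near-optimal cost and we are done. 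If $\tilde\tau<s$, the restriction $\tilde\xi|_{[0,s]}$ is a competitor for $u^\varepsilon(\tilde\xi(s),s)$, so $u^\varepsilon(\tilde\xi(s),s)\le\int_{\tilde\tau}^s L(\tilde\xi/\varepsilon,\dot{\tilde\xi})\,dr+f_\varepsilon(\tilde\xi(\tilde\tau),\tilde\tau)$; splitting $\int_{\tilde\tau}^t=\int_{\tilde\tau}^s+\int_s^t$ then gives $u^\varepsilon(x,t)+\eta\ge\int_s^t L(\tilde\xi/\varepsilon,\dot{\tilde\xi})\,dr+u^\varepsilon(\tilde\xi(s),s)=I_s(\tilde\tau,\tilde\xi)$. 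Sending $\eta\to0$ and combining with the previous paragraph proves \eqref{eq:DPP}.

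I do not anticipate a genuine analytic obstacle: the running costs are finite by \eqref{eqn:K_0L}, near-minimizers are used in place of minimizers, and the whole argument is combinatorial. The care is entirely in the bookkeeping — verifying that the boundary constraint set $\partial Q^\varepsilon_t$ restricts to $\partial Q^\varepsilon_s$ below time $s$ (so that restrictions and concatenations of admissible paths remain admissible), and that the glued curve $\tilde\xi$ is absolutely continuous, which holds precisely because its two pieces agree at the splitting time $s$. Having Proposition~\ref{prop:uepocf} already established, so that the endpoint time $\tau=t$ can be discarded, is what makes the $\tau\ge s$ cases immediate and avoids a separate boundary-exit discussion.
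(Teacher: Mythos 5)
Your proof is correct. The paper itself omits the proof of this lemma (deferring to \cite[Theorem 6.1]{barles_introduction_2013}), and your argument is the standard one, structured exactly like the paper's own proof of Lemma~\ref{lem:DPPHbar}(d): the two case splits on $\tau\ge s$ versus $\tau<s$, the use of near-minimizers in place of minimizers, and the two bookkeeping checks you flag (that $\partial Q^\varepsilon_t$ restricted below time $s$ is $\partial Q^\varepsilon_s$, and that the glued curve is absolutely continuous) are precisely what is needed, and all hold as you state.
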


The following result provides a gradient estimate for $u^\varepsilon$. This follows from the fact that $u^\varepsilon$ solves \eqref{eqn:PDE_epsilon} and that the comparison principle holds (see \cite{mitake_large_2009} and \cite[Theorem~2.1]{Ishii1989_dirichletHJ}). Alternatively, the Lipschitz continuity of $u^\varepsilon$ may be derived directly from its optimal control representation under our assumptions, without invoking the comparison principle. The proof is therefore omitted.

\begin{prop}\label{prop:ueplip}
Assume \ref{itm:A1}--\ref{itm:A5}. Let $\varepsilon>0$. Then,
\[
\left\|u^\varepsilon_t\right\|_{L^\infty\left(\overline{\Om}_\ep \times [0, \infty)\right)}+\left\|Du^\varepsilon\right\|_{L^\infty(\Om_\ep \times [0, \infty))} \leq C_0,
\]
where $C_0=C_0\left(H, \mathrm{Lip}(g)\right)>0$ is a constant independent of $\ep$.
\end{prop}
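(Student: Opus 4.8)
The plan is to recover this as (the rescaled form of) the a priori estimate \eqref{eqn:u_prior}. I will describe two routes: the first via the comparison principle, which is the cleanest way to obtain the stated $\varepsilon$-free, geometry-free constant $C_0 = C_0(H,\mathrm{Lip}(g))$; the second via the optimal control formula \eqref{eqn:ocfue}, sketched only. \emph{Route 1 (comparison).} By \eqref{eq:H-lower-bd}, $g(x) + C_1 t$ solves $\partial_t + H(x/\varepsilon,\cdot) \ge 0$ on all of $\R^n\times(0,\infty)$, hence is a generalized supersolution of \eqref{eqn:PDE_epsilon} (the relaxed boundary inequality holds through the "equation" branch, and it agrees with $g$ at $t=0$); by \eqref{eqn:K_0H} and $g\in\mathrm{Lip}(\R^n)$, the function $g(x) - Ct$ with $C := \tfrac12\|Dg\|_{L^\infty(\R^n)}^2 + K_0$ solves $\partial_t + H(x/\varepsilon,\cdot)\le 0$, and since $g\le\overline{b}\le b(\cdot,\cdot/\varepsilon)$ by \ref{itm:A5} and \eqref{eq:ffeps} it also lies below the boundary datum, so it is a generalized subsolution. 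The comparison principle (\cite[Theorem~2.1]{Ishii1989_dirichletHJ}, adapted to the unbounded, time-dependent setting as after \eqref{eqn:u_prior}) gives $g(x) - Ct \le u^\varepsilon(x,t)\le g(x) + C_1 t$, hence $\|u^\varepsilon(\cdot,t) - g\|_{L^\infty(\overline\Omega_\varepsilon)}\le \max\{C,C_1\}\,t$ with a constant depending only on $H$ and $\|Dg\|_{L^\infty}$.

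Since \eqref{eqn:PDE_epsilon} and the boundary datum $b(x,x/\varepsilon)$ are autonomous in $t$, the time shift $(x,t)\mapsto u^\varepsilon(x,t+h)$ solves the same problem with initial datum $u^\varepsilon(\cdot,h)$; comparing it against $u^\varepsilon \pm \max\{C,C_1\}h$ and invoking the estimate just obtained at time $h$, a second application of the comparison principle yields $\|u^\varepsilon_t\|_{L^\infty(\overline\Omega_\varepsilon\times[0,\infty))}\le \max\{C,C_1\}$. Finally, with this bound in hand, $|H(x/\varepsilon,Du^\varepsilon)| = |u^\varepsilon_t|\le\max\{C,C_1\}$ in the viscosity sense on $\Omega_\varepsilon\times(0,\infty)$, and the coercivity \ref{itm:A2} upgrades this to $\|Du^\varepsilon\|_{L^\infty(\Omega_\varepsilon\times[0,\infty))}\le C_0$ with $C_0 = C_0(H,\mathrm{Lip}(g))$; all constants are independent of $\varepsilon$, $n$, and $\partial\Omega$ because the barriers $g\mp Ct$ and the normalization \eqref{eqn:K_0H}--\eqref{eqn:K_0L} of $H$ carry no geometric information.

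\emph{Route 2 (optimal control).} From \eqref{eqn:ocfue}, the constant path $\xi\equiv x$ with $\tau=0$ together with $L(y,0)\le K_0$ gives $u^\varepsilon(x,t)\le K_0 t + g(x)$; for the reverse bound one uses $f_\varepsilon(\xi(\tau),\tau)\ge g(\xi(\tau))$ from \eqref{eq:ffeps}, the fundamental theorem of calculus for $s\mapsto g(\xi(s))$ (after mollification), and the Fenchel inequality $Dg(y)\cdot v \le L(y,v) + H(y,Dg(y)) \le L(y,v) + \tfrac12\|Dg\|_{L^\infty}^2 + K_0$ via \eqref{eqn:K_0H}, to get $u^\varepsilon(x,t)\ge g(x) - (\tfrac12\|Dg\|_{L^\infty}^2 + K_0)t$, hence $|u^\varepsilon(x,t) - g(x)|\le Ct$ again. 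This initial-layer estimate is then propagated to $\|u^\varepsilon_t\|_{L^\infty}\le C$ via the Dynamic Programming Principle (Lemma~\ref{lem:uepDPP}): inserting a constant piece at $x$ gives $u^\varepsilon(x,t)\le u^\varepsilon(x,s) + K_0(t-s)$ for $t>s$, while the reverse direction is obtained by splitting the DPP at an intermediate time $s$, treating the "exit before $s$" and "exit after $s$" alternatives, time-shifting the near-optimal trajectory, and again using the Fenchel bound; the spatial bound then follows from \eqref{eqn:PDE_epsilon} and \ref{itm:A2} as in Route~1.

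\emph{Main obstacle.} The delicate point is to keep every constant independent of $\varepsilon$ and of the geometry of $\partial\Omega$. This is transparent in Route~1, since the barriers and the normalization of $H$ are purely analytic; the only technical care there is in the comparison principle itself (the relaxed reading of the Dirichlet condition and the unbounded domain), which is standard. In Route~2 the perforated geometry would enter if one attempted a direct spatial-Lipschitz argument — transporting a near-optimal path from $x$ to a nearby $\tilde x$ forces a detour around the $\varepsilon$-obstacles, producing constants depending on $n$ and $\partial\Omega$ — and it is precisely to avoid this that the spatial estimate is routed through the temporal one via the PDE and the coercivity, so that no geometric constant appears.
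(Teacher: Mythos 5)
Your Route~1 is exactly the argument the paper intends (and carries out in the Simplifications section when deriving \eqref{eqn:u_prior}): barriers $g\pm Ct$ plus the generalized comparison principle give $|u^\varepsilon - g|\le Ct$, a time shift upgrades this to the full bound on $u^\varepsilon_t$, and the coercivity \ref{itm:A2} then yields the spatial Lipschitz bound, with all constants depending only on $H$ and $\mathrm{Lip}(g)$. Your Route~2 is precisely the alternative the paper alludes to without writing out, so the proposal is correct and follows essentially the same approach.
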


The following lemma (cf. \cite[Lemma A.2]{han_quantitative_2024} for the state-constraint case) shows that all the optimal paths for $u^\varepsilon$ admit a uniform velocity bound, which will be useful in simplifying the optimal control formula for $u^\varepsilon$. 

\begin{lem}\label{lem:velocity bound}
Assume \ref{itm:A1}--\ref{itm:A5}. Let $\varepsilon, t>0$ and $x\in {\overline{\Omega}_\ep}$. 
Suppose $\gamma: \left[\tau, \frac{t}{\varepsilon}\right] \to \overline{\Omega}$ is a minimizing curve for $u^\varepsilon(x, t)$ in the sense of \eqref{eqn:ocfue}, with $\gamma$ absolutely continuous for some $\tau \in [0, t)$, and
\begin{equation}
u^\varepsilon (x, t) = \varepsilon \int^\frac{t}{\varepsilon
}_{\frac{\tau}{\varepsilon}} L\left(\gamma(s),\dot{\gamma}(s)\right) \,ds+\fep \left(\varepsilon \gamma\left(\frac{\tau}{\varepsilon}\right),\tau\right) , 
	\qquad \gamma\left(\tfrac{t}{\varepsilon}\right)=\tfrac{x}{\varepsilon}, 
	\qquad \gamma\left(\tfrac{\tau}{\varepsilon}, \tau\right) \in \partial Q_t. 
\end{equation}
Then, there exists a constant $M_0=M_0\left(H, \|Dg\|_{L^\infty(\mathbb{R}^n)}\right)>0$ such that 
\begin{align}\label{eq:LipMinimizers}
	\left|\dot{\gamma} (s) \right |\leq M_0 \qquad\text{for a.e.}\;s \in \left[\tfrac{\tau}{\varepsilon}, \tfrac{t}{\varepsilon}\right]. 
\end{align}
\end{lem}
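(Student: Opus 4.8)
The plan is to show that any minimizing curve $\gamma$ for $u^\varepsilon(x,t)$ has bounded velocity by combining an \emph{upper bound} on the running cost (obtained by comparing with a cheap competitor path) with the \emph{lower bound} $L(y,v) \geq \tfrac{|v|^2}{2} - K_0$ from \eqref{eqn:K_0L}. The subtle point — and the main obstacle — is that this only controls the $L^2$-average of $|\dot\gamma|$, not its $L^\infty$ norm, so a pointwise (a.e.) bound must come from an optimality argument using the Euler--Lagrange structure, or more robustly from a localized version of the same comparison on short subintervals.

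\textbf{Step 1: An a priori upper bound for the cost.} Using Proposition \ref{prop:ueplip} and \eqref{eq:ffeps}, I would first note $u^\varepsilon(x,t) \leq C_0 t + C$ with $C$ controlled by $\|g\|_{L^\infty}$; more to the point, testing \eqref{eqn:ocfue} with the competitor $\tau = t$ replaced by a near-exit as in Proposition \ref{prop:uepocf} (a short $C^1$ path along $\partial\Omega_\varepsilon$), or simply with the straight line from a boundary point to $x$ in time $O(\varepsilon)$, gives $u^\varepsilon(x,t) \leq \fep(\cdot,\cdot) + C \leq C$ where $C$ depends only on $H$ and $\mathrm{Lip}(g)$ (using that $g \leq \overline b$ and the bounds \eqref{eqn:K_0L}). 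Then, since $\gamma$ is a minimizer and $L \geq -K_0$,
\[
\varepsilon \int_{\tau/\varepsilon}^{t/\varepsilon} \frac{|\dot\gamma(s)|^2}{2}\,ds
\leq \varepsilon \int_{\tau/\varepsilon}^{t/\varepsilon} \bigl(L(\gamma,\dot\gamma) + K_0\bigr)\,ds
= u^\varepsilon(x,t) - \fep(\varepsilon\gamma(\tfrac{\tau}{\varepsilon}),\tau) + K_0(t-\tau) \leq C(1+t),
\]
which already bounds $\int |\dot\gamma|^2$ but not $\|\dot\gamma\|_{L^\infty}$, and the $t$-dependence must be removed.

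\textbf{Step 2: Localization to kill the $t$-dependence and upgrade to $L^\infty$.} The idea, following \cite{mitake_asymptotic_2008, ishii_asymptotic_2008} as the remark indicates, is to exploit optimality on every subinterval: for any $[\sigma_1,\sigma_2] \subset [\tfrac{\tau}{\varepsilon},\tfrac{t}{\varepsilon}]$, the restriction $\gamma|_{[\sigma_1,\sigma_2]}$ minimizes $\int_{\sigma_1}^{\sigma_2} L(\gamma,\dot\gamma)\,ds$ among curves with the same endpoints $\gamma(\sigma_1),\gamma(\sigma_2)$ that stay in $\overline\Omega$ (by the Dynamic Programming Principle, Lemma \ref{lem:uepDPP}, suitably rescaled). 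Comparing with the straight segment joining these endpoints — which stays in $\overline\Omega$ after the standard $C^1$-boundary modification of \cite[Lemma A.1]{han_quantitative_2024} when it would exit, incurring only a bounded length penalty — gives
\[
\int_{\sigma_1}^{\sigma_2} \frac{|\dot\gamma|^2}{2}\,ds - K_0(\sigma_2 - \sigma_1) \leq \int_{\sigma_1}^{\sigma_2} L(\gamma,\dot\gamma)\,ds \leq (\sigma_2-\sigma_1)\Bigl(\tfrac{C_b^2|\gamma(\sigma_2)-\gamma(\sigma_1)|^2}{2(\sigma_2-\sigma_1)^2} + K_0\Bigr),
\]
so by Jensen and Cauchy--Schwarz the local $L^2$-average of $|\dot\gamma|$ is controlled by $\frac{|\gamma(\sigma_2)-\gamma(\sigma_1)|}{\sigma_2-\sigma_1} + C$, uniformly in the interval. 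A Lebesgue-point / bootstrap argument (take $\sigma_2 \to \sigma_1$ around a Lebesgue point of $|\dot\gamma|$) then converts this into the pointwise bound $|\dot\gamma(s)| \leq M_0$ for a.e.\ $s$, with $M_0$ depending only on $H$ (through $K_0$ and $C_b$, and $C_b$ in turn only on $n,\partial\Omega$) and $\|Dg\|_{L^\infty}$ — and, crucially, not on $t$, $\varepsilon$, or $x$.

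\textbf{Main obstacle.} The delicate part is the $L^2 \to L^\infty$ upgrade in the presence of the boundary constraint: one cannot freely compare $\gamma$ with an arbitrary straight segment because it may leave $\overline\Omega$, and one must invoke the boundary-chart modification of \cite[Lemma A.1]{han_quantitative_2024} to produce an admissible competitor whose cost exceeds that of the straight segment by at most a multiplicative constant $C_b(n,\partial\Omega)$. Verifying that this penalty is uniform and that the resulting constant $M_0$ depends only on $H$ and $\mathrm{Lip}(g)$ — in particular that the $n$- and $\partial\Omega$-dependence enters only through $C_b$ at the level of the \emph{comparison path} and not in the final velocity bound (as claimed in the Remark, improving on \cite[Lemma 2.1]{han_quantitative_2024}) — is where the argument of \cite{mitake_asymptotic_2008, ishii_asymptotic_2008} must be adapted carefully.
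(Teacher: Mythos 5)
There is a genuine gap in Step~2, and it is fatal to the proposed route. Your localized comparison bounds the restricted cost of $\gamma$ on $[\sigma_1,\sigma_2]$ from above by the cost of a (boundary-modified) straight segment with the same endpoints, yielding
\[
\frac{1}{\sigma_2-\sigma_1}\int_{\sigma_1}^{\sigma_2}\frac{|\dot\gamma|^2}{2}\,ds \;\leq\; \frac{C_b^2}{2}\left(\frac{|\gamma(\sigma_2)-\gamma(\sigma_1)|}{\sigma_2-\sigma_1}\right)^2 + 2K_0.
\]
But when you send $\sigma_2\to\sigma_1$ at a Lebesgue point, \emph{both} sides converge to (a constant multiple of) $|\dot\gamma(\sigma_1)|^2$: the left side to $\tfrac12|\dot\gamma(\sigma_1)|^2$ and the right side to $\tfrac{C_b^2}{2}|\dot\gamma(\sigma_1)|^2 + 2K_0$. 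Since $C_b\geq 1$, the resulting inequality is vacuous and gives no bound on $|\dot\gamma(\sigma_1)|$. The comparison is circular because the competitor's speed is the same local displacement rate you are trying to control; no amount of refinement of the boundary-chart penalty fixes this. (It would also, if it worked, produce a constant depending on $C_b(n,\partial\Omega)$, contradicting the claimed independence in the Remark.)

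The missing ingredient is the Lipschitz bound on the \emph{value function} itself, which is what the paper uses. By the Dynamic Programming Principle, the restriction of the minimizer satisfies the exact identity $u^\varepsilon(\eta(\tilde t),\tilde t) - u^\varepsilon(\eta(t_0),t_0) = \int_{t_0}^{\tilde t} L(\eta/\varepsilon,\dot\eta)\,ds$ (with $\eta(s)=\varepsilon\gamma(s/\varepsilon)$). Combining Proposition~\ref{prop:ueplip} with the chain rule along absolutely continuous curves (Lemma~\ref{lem:acdiff}, which is where the $C^1$-boundary regularity enters instead of through $C_b$), the left side differentiates a.e.\ to $\tilde p(t_0)+p(t_0)\cdot\dot\eta(t_0)$ with $|\tilde p|+|p|\leq C_0(H,\mathrm{Lip}(g))$, while the right side differentiates to $L(\eta(t_0)/\varepsilon,\dot\eta(t_0))\geq \tfrac12|\dot\eta(t_0)|^2-K_0$. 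The inequality $C_0(1+|\dot\eta(t_0)|)\geq \tfrac12|\dot\eta(t_0)|^2 - K_0$ — linear versus quadratic growth — is what forces $|\dot\eta(t_0)|\leq M_0$ with $M_0$ depending only on $C_0$ and $K_0$. Your Step~1 and the use of the DPP for subinterval optimality are fine and consistent with the paper, but the $L^2\to L^\infty$ upgrade must go through the value function's Lipschitz constant, not through a path-to-path comparison.
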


\begin{rem} The bound in \eqref{eq:LipMinimizers} is independent of $n$ and $\partial\Omega$. While \cite[Remark~7]{han_quantitative_2024} notes this fact in the state-constraint setting without proof, and their main argument yields a bound depending on $n$ and $\Omega$, our approach—built upon Lemma~\ref{lem:acdiff} and extending \cite[Proposition~4.1]{mitake_asymptotic_2008}—provides a direct proof of this independence.
\end{rem}

\begin{proof}[Proof of Lemma \ref{lem:velocity bound}] Define $\eta:[0,t]\to \overline{\Omega}_\varepsilon$ by $\eta(s) = \varepsilon \gamma\!\left(\tfrac{s}{\varepsilon}\right)$ for $s\in [0,t]$. Then $\eta(t) = x$, $(\eta(\tau), \tau)\in \partial Q^\varepsilon_t$, and 
\begin{align*}
    u^\varepsilon(x,t) 
    &= \int_\tau^t L\left(\frac{\eta(s)}{\varepsilon}, \dot{\eta}(s)\right)\,ds 
       + f_\varepsilon(\eta(\tau), \tau),
\end{align*}
so it remains to show that $|\dot{\eta}(s)| \leq M_0$ for a.e. $s \in [0,t]$ for some $M_0=M_0\left(H, \|Dg\|_{L^\infty(\mathbb{R}^n)}\right)>0$. Take $h>\tau$. Using $\eta$ as an admissible path in 
\eqref{eq:DPP} of Lemma \ref{lem:uepDPP}, we obtain
\begin{align*}
	u^\varepsilon(x,t) \leq \int_h^t L\left(\frac{\eta(s)}{\varepsilon}, \dot{\eta}(s)\right)\;ds  + u^\varepsilon(\eta(h), h) .
\end{align*}
Therefore,
\begin{align*}
	\int_\tau^h L\left(\frac{\eta(s)}{\varepsilon}, \dot{\eta}(s)\right)\,ds + f_\varepsilon(\eta(\tau), \tau) \leq u^\varepsilon(\eta(h), h).
\end{align*}
By the optimal control representation of $u^\varepsilon(\eta(h),h)$, we obtain
\begin{align*}
	u^\varepsilon(\eta(h), h) = \int_\tau^h L\left(\frac{\eta(s)}{\varepsilon}, \dot{\eta}(s)\right)\,ds + f_\varepsilon(\eta(\tau), \tau)
\end{align*}
for any $h\in (\tau, t)$. Since $u^\varepsilon$ is Lipschitz in $\overline{\Omega}_\varepsilon\times [0,\infty)$, Lemma \ref{lem:acdiff} implies that $r\mapsto u^\varepsilon(\gamma(r),r)$ is absolutely continuous on $[0,t]$. Moreover, for a.e. $t_0\in (0,t)$,
\begin{align*}
	\frac{d}{ds}u^\varepsilon(\eta(t_0),t_0) = \tilde{p}(t_0) + p(t_0)\cdot \dot{\eta}(t_0),
\end{align*}
where $(\tilde{p}(t_0), p(t_0)) \in \R^{n+1}$ such that $|\tilde{p}(t_0)| + |p(t_0)| \leq C_0$ by Proposition \ref{prop:ueplip} and Lemma \ref{lem:acdiff}. Let $t_0 \in (0,t)$ such that $\dot{\gamma}(t_0)$ exists. Then, for any $\tilde{t} \in (t_0,t)$,
\begin{align}\label{eq:ll}
	\frac{u^\varepsilon\left(\eta\left(\tilde{t}\right),\tilde{t}\right) -u^\varepsilon\left(\eta(t_0), t_0\right)}{\tilde{t} - t_0} = \frac{1}{\tilde{t}-t_0}\int_{t_0}^{\tilde{t}} L\left(\frac{\eta(s)}{\varepsilon}, \dot{\eta}(s)\right)\;ds.
\end{align}
Taking the limit as $\tilde{t}\to t_0$, we obtain 
\begin{align*}
	\tilde{p}(t_0) + p(t_0)\cdot \dot{\eta}(t_0) = L\left(\frac{\eta(t_0)}{\varepsilon}, \dot{\eta}(t_0)\right) 
	\qquad\Longrightarrow\qquad 
	C_0 + C_0|\dot{\eta}(t_0)| \geq \frac{|\dot{\eta}(t_0)|^2}{2} - K_0,
\end{align*}
which follows from \eqref{eqn:K_0L}. This implies that $|\dot{\eta}(t_0)|\leq M_0$ for some constant $M_0$ depending only on $C_0, K_0$. 
\end{proof}

\begin{rem} 
One can also apply Lemma~\ref{lem:A1} to obtain a constant that depends on $n$, $\partial\Omega$, $H$, and $\mathrm{Lip}(g)$ as follows:
\begin{align*}
    \frac{u^\varepsilon \left(\eta\left(\tilde{t}\right),\tilde{t}\right) - u^\varepsilon(\eta(t_0), t_0)}{\tilde{t} - t_0} 
    &= 
    \frac{u^\varepsilon \left(\eta \left(\tilde{t}\right),\tilde{t}\right) - u^\varepsilon \left(\eta(t_0), \tilde{t}\right)}{\tilde{t} - t_0} 
    + \frac{u^\varepsilon\left(\eta(t_0),\tilde{t}\right) - u^\varepsilon\left(\eta(t_0), t_0\right)}{\tilde{t} - t_0} \\
    &\leq C_0 \, C_b \, \frac{\left|\eta \left(\tilde{t}\right) - \eta(t_0)\right|}{\tilde{t}-t_0} + C_0,
\end{align*}
where $C_0$ denotes the Lipschitz bound for $u^\varepsilon$ from Proposition~\ref{prop:ueplip}, and $C_b$ is the constant from Lemma~\ref{lem:A1}. This argument follows the approach in \cite{han_quantitative_2024}.
\end{rem}

For convenience, we state the following lemma, which computes the derivative of $w \in  W^{1,\infty}(\Omega)\cap C(\overline{\Omega})$ along an absolutely continuous curve and generalizes \cite[Proposition 4.1]{mitake_asymptotic_2008} to the unbounded periodic setting considered in this paper. A proof of Lemma~\ref{lem:acdiff} is given in the Appendix, along with a simpler alternative for $C^2$ domains, included for clarity.

\begin{lem}\label{lem:acdiff} Consider an open and connected set $\Omega \subset \mathbb{R}^n$ with $C^1$ boundary and assume $\Omega$ is $\mathbb{Z}^n$-periodic. 
Let $w \in W^{1,\infty}(\Omega)\cap C(\overline{\Omega})$ and $\gamma\in \mathrm{AC}([a,b];\overline{\Omega})$. Then the map $s\mapsto w(\gamma(s))$ is absolutely continuous, and there exists a function $p\in L^\infty((a,b);\R^n)$ such that
\begin{align*}
	\frac{d}{ds}w(\gamma(s)) = \dot{\gamma}(s)\cdot p(s), \qquad \text{and} \qquad p(s)\in \partial_c w(\gamma(s)) \quad \text{for a.e. } s\in (a,b),
\end{align*}
where $\partial_c w$ denotes the Clarke differential of $w$, defined by
\begin{align*}
	\partial_c w(x) = \bigcup_{r>0} \overline{\mathrm{co}} \{Dw(y): y\in \Omega\cap B_r(x), \ w \text{ is differentiable at } y\}, \qquad x\in \overline{\Omega}.
\end{align*}
\end{lem}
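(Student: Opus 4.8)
The plan is to reduce the lemma to a bounded domain, extend $w$ to a globally Lipschitz function on $\R^n$ whose Clarke differential is tangentially compatible with $\partial_c w$ along $\partial\Omega$, and then establish a generalized chain rule for Lipschitz maps along absolutely continuous curves, following the scheme of \cite[Proposition~4.1]{mitake_asymptotic_2008}. The only genuinely new point compared with that reference is the passage from the unbounded $\Z^n$-periodic domain to a compact piece, which is free since $\gamma([a,b])$ is compact.

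First I would fix a bounded open set $U\supset\gamma([a,b])$. Since $\overline\Omega$ is quasiconvex (a consequence of the $C^1$-regularity of $\partial\Omega$ and $\Z^n$-periodicity; cf.\ \cite[Lemma~A.1]{han_quantitative_2024}), the function $w\in W^{1,\infty}(\Omega)$ is Lipschitz on $\overline\Omega$, hence extends to a Lipschitz function $\tilde w$ on $\R^n$. Moreover, by flattening $\partial\Omega$ locally and extending by even reflection across the straightened boundary, I would arrange that for every $x\in\partial\Omega$ and every $v\in T_x\partial\Omega$,
\[
\{\, q\cdot v : q\in\partial_c\tilde w(x)\,\} = \{\, q\cdot v : q\in\partial_c w(x)\,\},
\]
since reflection preserves the tangential component of the gradient and only flips the normal one. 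For $C^2$ domains this is transparent: the inward unit normal $\nu$ is $C^1$, so one may push in via $\Psi_\delta(x)=x+\delta\,\nu(x)$ and mollify $w\circ\Psi_\delta$ at scale $\delta'\ll\delta$; in the merely $C^1$ case one uses instead a partition of unity subordinate to a finite cover of $\partial\Omega\cap U$ together with local straightening charts, which is the more technical ingredient.

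Next I would mollify, $\tilde w_\delta=\tilde w*\phi_\delta$, so that $\tilde w_\delta\in C^\infty$, $\tilde w_\delta\to\tilde w$ locally uniformly, and $\|D\tilde w_\delta\|_{L^\infty}\le L:=\mathrm{Lip}(\tilde w)$. The classical chain rule gives $\tilde w_\delta(\gamma(t))-\tilde w_\delta(\gamma(t'))=\int_{t'}^{t}D\tilde w_\delta(\gamma(s))\cdot\dot\gamma(s)\,ds$ for $a\le t'\le t\le b$. The functions $s\mapsto D\tilde w_\delta(\gamma(s))\cdot\dot\gamma(s)$ are dominated by the fixed $L^1$ function $L\,|\dot\gamma|$, hence uniformly integrable, so by the Dunford--Pettis theorem a subsequence converges weakly in $L^1(a,b)$ to some $g$. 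Letting $\delta\to0$ and using $\tilde w_\delta\to\tilde w$ uniformly on $\gamma([a,b])$ yields $w(\gamma(t))-w(\gamma(t'))=\int_{t'}^{t}g(s)\,ds$ for all $t',t$, so $s\mapsto w(\gamma(s))$ is absolutely continuous with $\frac{d}{ds}w(\gamma(s))=g(s)$ for a.e.\ $s$.

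It remains to identify $g$. At a.e.\ $s$ where both $\dot\gamma(s)$ and $g(s)$ exist I would use $\gamma(s\pm h)=\gamma(s)\pm h\dot\gamma(s)+o(h)$, the Lipschitz bound on $\tilde w$, and the fact that the Clarke generalized directional derivative of $\tilde w$ at $\gamma(s)$ is the support function of $\partial_c\tilde w(\gamma(s))$, to conclude that $g(s)$ lies in the interval $\{\, q\cdot\dot\gamma(s) : q\in\partial_c\tilde w(\gamma(s))\,\}$. If $\gamma(s)\in\Omega$ then $\partial_c\tilde w(\gamma(s))=\partial_c w(\gamma(s))$; if $\gamma(s)\in\partial\Omega$ and $\dot\gamma(s)=0$ then $g(s)=0$ is attained by any $q\in\partial_c w(\gamma(s))$; and if $\gamma(s)\in\partial\Omega$ with $\dot\gamma(s)\ne0$, then writing $\overline\Omega=\{\psi\le0\}$ near $\gamma(s)$ with $\psi\in C^1$ and $\nabla\psi(\gamma(s))\ne0$, the map $r\mapsto\psi(\gamma(r))$ attains a local maximum at $r=s$, so $\nabla\psi(\gamma(s))\cdot\dot\gamma(s)=0$, i.e.\ $\dot\gamma(s)\in T_{\gamma(s)}\partial\Omega$, and the tangential compatibility of the extension then lets $g(s)$ be attained by some $q\in\partial_c w(\gamma(s))$. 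Finally, the multifunction $s\mapsto\{\, q\in\partial_c w(\gamma(s)) : q\cdot\dot\gamma(s)=g(s)\,\}$ has nonempty closed convex values and measurable graph, so a measurable selection theorem provides the desired $p$, which lies in $L^\infty((a,b);\R^n)$ because $\partial_c w$ takes values in $\overline B\big(0,\mathrm{Lip}(w)\big)$. I expect the main obstacle to be this last step in the $C^1$ setting --- producing the tangentially compatible extension and combining it with the tangency $\dot\gamma(s)\in T_{\gamma(s)}\partial\Omega$; the remaining pieces (localization, the weak-$L^1$ compactness argument, the measurable selection) are routine, and the $C^2$ hypothesis is precisely what streamlines the extension.
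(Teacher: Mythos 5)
Your route is genuinely different from the paper's, and the point where it is incomplete is exactly the point the paper is engineered to avoid. The paper never extends $w$ past $\partial\Omega$: it constructs a smooth vector field $\zeta:\overline{\Omega}\to\R^n$ with $x+t\zeta(x)\in\Omega$ for $t\in(0,1]$ (Lemma~\ref{lem:smoothC1pushGeneral}, or Lemma~\ref{lem:smoothC2push} in the $C^2$ case), replaces $\gamma$ by the shifted curve $\gamma_\delta=\gamma+\delta\,\zeta(\gamma)$ which lies strictly inside $\Omega$, mollifies $w$ at a scale smaller than $\mathrm{dist}(\gamma_\delta([a,b]),\partial\Omega)$, and passes to the limit as in \cite[Proposition~4.1]{mitake_asymptotic_2008}, cf.\ \cite[Proposition~2.4]{ishii_asymptotic_2008}. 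Because the mollified gradients are averages of $Dw$ at points \emph{of $\Omega$} near $\gamma(s)$, the weak-$\ast$ limit $p(s)$ lands automatically in the interior Clarke set $\partial_c w(\gamma(s))$ as defined in the statement (gradients taken only in $\Omega\cap B_r(x)$). There is no two-sided Clarke differential to reconcile with anything.

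Your proposal instead extends $w$ to $\tilde w\in\mathrm{Lip}(\R^n)$ and mollifies the extension. The localization to a compact piece, the use of quasiconvexity to get $w\in\mathrm{Lip}(\overline\Omega)$, the Dunford--Pettis weak-$L^1$ compactness step, the observation that $\dot\gamma(s)\in T_{\gamma(s)}\partial\Omega$ at a.e.\ boundary time via $\psi\circ\gamma\le 0$, and the measurable-selection step are all fine. What is missing is precisely the step you flag: producing an extension whose two-sided Clarke set $\partial_c\tilde w(x)$ has the same tangential projection as the one-sided $\partial_c w(x)$ at every $x\in\partial\Omega$, and doing so with only $C^1$ boundary regularity. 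Concretely, one must (i) justify the Clarke chain rule $\partial_c(f\circ\Phi)(x)=D\Phi(x)^T\partial_c f(\Phi(x))$ under a merely $C^1$ chart $\Phi$; (ii) patch finitely many chart-local even reflections with a partition of unity and check that the cross-terms $\sum_i(D\chi_i)\tilde w_i$ do not enlarge $\partial_c\tilde w$ at the boundary (the cancellation $\sum_i D\chi_i=0$ only helps after subtracting a suitable constant and estimating $\tilde w_i(y)-w(x)$, which is $O(|y-x|)$, not $o(|y-x|)$, so one has to argue more carefully that the perturbation disappears in the limit defining $\partial_c$); and (iii) propagate the tangential-projection identity through (i) and (ii). None of this is carried out, and for a $C^1$ (as opposed to $C^2$) boundary it is not a routine detail. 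The push-in construction of Lemmas~\ref{lem:smoothC1pushGeneral}--\ref{lem:smoothC2push} is introduced in the paper exactly so that this reconciliation never has to be made; as written, your proof has a genuine gap at that step.
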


Next, analogous to part (b) of Lemma \ref{lem:DPPHbar}, we present a corresponding result for $u^\ep$: when $u(x, t) < \ol{b}(x)$ with $t>0$, for sufficiently small $\ep>0$ and any $x_\ep \in \ol{\Om}_\ep$ close to $x$, the optimal running time $t-\tau_\ep$ in the representation of $u^\ep(x_\ep, t)$ is uniformly bounded away from $0$. This property will again play a crucial role when considering all $\tau \in [0, t)$ later.

\begin{lem}\label{lem:ueptimelbd}
Assume \ref{itm:A1}--\ref{itm:A5}. Let $t>0$ and $x\in \R^n$. Suppose $u(x, t) < \overline{b}(x)$. Let $\ep>0$ and $x_\ep \in \left(x+\ep Y\right)\cap \ol{\Omega}_\ep$. 
Suppose $\gamma_\varepsilon \in \mathcal{C}\Big(\frac{x_\varepsilon}{\varepsilon}, \frac{t}{\varepsilon}; \overline{\Omega}\Big)$ is a minimizing curve for $u^\varepsilon(x_\varepsilon, t)$ in the sense of \eqref{eqn:ocfue}, that is,
\begin{equation}\label{eqn:uepoptimalpath}
	u^\varepsilon (x_\ep, t) = \varepsilon \int^\frac{t}{\varepsilon
}_{\frac{\tau_\ep}{\varepsilon}} L\left(\gamma_\ep(s),\dot{\gamma}_\ep (s)\right) \,ds+\fep \left(y_\ep,\tau_\ep\right)  , 
	\qquad 
	\gamma_\ep\left(\frac{t}{\varepsilon}\right)=\frac{x_\varepsilon}{\ep}, \;\; 
	\gamma_\ep\left(\frac{\tau_\ep}{\varepsilon}\right)=\frac{y_\ep}{\ep}
\end{equation}
for some $\left( \tfrac{y_\ep}{\varepsilon}, \tau_\varepsilon \right) \in \partial Q_t$, and $\left|x_\ep-y_\ep\right|\leq M_0(t-\tau_\ep)$, where $M_0$ is given in Lemma \ref{lem:velocity bound}. 
Then there exists $\ep_0 > 0$ such that for all $0 < \ep < \ep_0$,
    \begin{equation}
    0 \leq \tau_\ep < \tau_{x, t} < t,
    \end{equation}
    for some constant $\tau_{x, t} \in (0, t)$, i.e. , $\{\tau_\ep: \ep \in \left(0, \ep_0\right)\}$ is uniformly bounded away from $t$.
\end{lem}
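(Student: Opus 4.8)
The plan is to argue by contradiction, adapting the scheme of Lemma~\ref{lem:DPPHbar}(b) to the oscillatory setting. The one genuinely new point is that the boundary datum $f_\ep$ on $\partial\Omega_\ep$ oscillates and need not converge, so instead of passing to the limit in $f_\ep(y_\ep,\tau_\ep)$ I will use only the one-sided inequality $f_\ep\ge\overline b$ from \eqref{eq:ffeps}, together with the qualitative homogenization $u^\ep\to u$ (for instance \cite[Theorem~1.2]{Horie1998Homogenization}) and the continuity of $u$ on $\R^n$ (Propositions~\ref{prop:ValueU} and~\ref{prop:ocfcont}).

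First I would suppose the conclusion fails. Then there exist $\ep_k\to 0^+$, minimizers $\gamma_{\ep_k}$ as in \eqref{eqn:uepoptimalpath} with times $\tau_{\ep_k}\in[0,t)$, constraint points $\bigl(\tfrac{y_{\ep_k}}{\ep_k},\tau_{\ep_k}\bigr)\in\partial Q_t$, and $|x_{\ep_k}-y_{\ep_k}|\le M_0(t-\tau_{\ep_k})$, such that $\tau_{\ep_k}\to t$. Since $x_{\ep_k}\in x+\ep_k Y$ we have $x_{\ep_k}\to x$, and then the velocity bound forces $y_{\ep_k}\to x$. For $k$ large, $\tau_{\ep_k}>0$, so $\tfrac{y_{\ep_k}}{\ep_k}\in\partial\Omega$, i.e.\ $y_{\ep_k}\in\partial\Omega_{\ep_k}$, and \eqref{eq:ffeps} gives $f_{\ep_k}(y_{\ep_k},\tau_{\ep_k})=b\bigl(y_{\ep_k},\tfrac{y_{\ep_k}}{\ep_k}\bigr)\ge\overline b(y_{\ep_k})$. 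Combining this with the lower bound $\ep_k\int_{\tau_{\ep_k}/\ep_k}^{t/\ep_k}L(\gamma_{\ep_k},\dot\gamma_{\ep_k})\,ds\ge-(t-\tau_{\ep_k})K_0$ from \eqref{eqn:K_0L} and with \eqref{eqn:uepoptimalpath}, I obtain
\[
u^{\ep_k}(x_{\ep_k},t)\ \ge\ \overline b(y_{\ep_k})-(t-\tau_{\ep_k})K_0 .
\]

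Next I would let $k\to\infty$. On the left, by qualitative homogenization $u^{\ep_k}\to u$ locally uniformly, and since $x_{\ep_k}\to x$ and $u$ is continuous on $\R^n$, the left-hand side tends to $u(x,t)$; on the right, $\tau_{\ep_k}\to t$ and $\overline b$ is continuous with $y_{\ep_k}\to x$, so the right-hand side tends to $\overline b(x)$. Thus $u(x,t)\ge\overline b(x)$, contradicting the hypothesis $u(x,t)<\overline b(x)$. This contradiction establishes the existence of the desired $\ep_0>0$ and of a constant $\tau_{x,t}\in(0,t)$ with $\tau_\ep<\tau_{x,t}$ for all $\ep\in(0,\ep_0)$.

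The step I expect to be the main obstacle is making the convergence $u^{\ep_k}(x_{\ep_k},t)\to u(x,t)$ precise: the domain $\overline\Omega_{\ep_k}$ varies with $k$ and the limit point $x$ may sit in a hole of every $\overline\Omega_{\ep_k}$. This is handled by the split $|u^{\ep_k}(x_{\ep_k},t)-u(x,t)|\le|u^{\ep_k}(x_{\ep_k},t)-u(x_{\ep_k},t)|+|u(x_{\ep_k},t)-u(x,t)|$, bounding the first term by the locally uniform homogenization restricted to the moving set $\overline\Omega_{\ep_k}\times\{t\}$ and the second by the continuity of $u$ on all of $\R^n$. The remaining bookkeeping is a routine transcription of Lemma~\ref{lem:DPPHbar}(b), with the crude bound $|x-y_\ep|\le C|t-\tau_\ep|^{1/2}$ used there replaced by the sharper $|x_\ep-y_\ep|\le M_0(t-\tau_\ep)$ already supplied by Lemma~\ref{lem:velocity bound}.
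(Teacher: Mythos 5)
Your proof is correct and takes essentially the same route as the paper: argue by contradiction assuming $\tau_{\ep_k}\to t$, use the velocity bound to force $y_{\ep_k}\to x$, bound the running cost from below via \eqref{eqn:K_0L}, replace the oscillating boundary term $f_{\ep_k}(y_{\ep_k},\tau_{\ep_k})$ by $\overline b(y_{\ep_k})$ via \eqref{eq:ffeps}, and then pass to the limit using qualitative homogenization and continuity of $u$ to contradict $u(x,t)<\overline b(x)$. The only cosmetic difference is that the paper phrases the contradiction as $\liminf_k u^{\ep_k}(x_{\ep_k},t)\ge\overline b(x)$ against the precomputed bound $u^{\ep_k}(x_{\ep_k},t)<\overline b(x)$, whereas you pass all the way to $u(x,t)\ge\overline b(x)$; you also make the decomposition $|u^{\ep_k}(x_{\ep_k},t)-u(x,t)|\le|u^{\ep_k}(x_{\ep_k},t)-u(x_{\ep_k},t)|+|u(x_{\ep_k},t)-u(x,t)|$ explicit, which the paper leaves implicit.
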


\begin{proof}
    We proceed by contradiction and suppose such $\ep_0$ does not exist. Then there exists a subsequence $\{\tau_{\ep_k}\}$ such that $\lim_{k\to \infty} \tau_{\ep_k}=t$.
    Since $\left|x_{\ep_k}- y_{\ep_k}\right| \leq M_0 (t-\tau_{\ep_k})$, we have $y_{\varepsilon_k}\to x$ as $k\to \infty$. By \cite{Horie1998Homogenization}, we know
\[
\lim_{\ep \searrow 0} \sup_{x\in \ol\Om_\ep}\left|u^\ep(x, t) -u(x, t)\right|=0,
\]
and since $u(x, t) <\overline{b}(x)$, there exists $\tilde{\ep} >0$ such that for any $\ep \in (0, \tilde{\ep})$, we have
\begin{equation}\label{eqn:uepleqbbar}
  u^\varepsilon (x_\ep, t) < \overline{b}(x).
\end{equation}
By \eqref{eqn:K_0L} and the definition of $\ol{b}$, we have
\[
\begin{aligned}
u^{\ep_k} (x_{\ep_k}, t) &= \varepsilon_k\int^\frac{t}{\ep_k
}_{\frac{\tau_{\ep_k}}{\ep_k}} L\left(\gamma_{\ep_k}(s),\dot{\gamma}_{\ep_k} (s)\right) \,ds+f_{\ep_k} \left(y_{\ep_k},\tau_{\ep_k}\right) \\
&\geq -(t-\tau_{\ep_k})K_0 + b\left(y_{\ep_k}, \frac{y_{\ep_k}}{\ep_k}\right) 
\geq -(t-\tau_{\ep_k})K_0+\ol{b}(y_{\ep_k}),  
\end{aligned}
\]
which gives $\liminf_{k\to \infty} u^{\ep_k} (x_{\ep_k}, t) \geq \ol{b}(x)$.
This contradicts \eqref{eqn:uepleqbbar}.
\end{proof}

\section{Extension of the cost function}\label{sec:costfunction}
In this section, we introduce a cost function, analogous to that in \cite{han_quantitative_2024}, which represents the cost of traveling between two points in \(\overline{\Omega}\) 
This cost function provides a representation of the value function $u^\varepsilon$ solving \eqref{eqn:PDE_epsilon}. We then derive a large-time average metric that characterizes the limiting solution \(u\) to \eqref{eqn:PDE_limit}. Since this metric is closely related to the state-constraint metric in \cite{han_quantitative_2024}, we recall the relevant definitions and results in Appendix \ref{appendix:costm} for completeness.

\subsection{The cost function with parameter $\tau$}
In the optimal control formula \eqref{eqn:ocfue} for $u^\ep$, the parameter $\tau \in [0, t)$ determines the starting time point of the trajectory. The running cost, given by the integral term in \eqref{eqn:ocfue}, is accumulated over $[\tau, t]$ and therefore depends on the elapsed time $t-\tau$. In other words, the full time horizon $t$ does not need to be used. This observation motivates fixing $\tau \in [0, t)$ and defining the cost function in terms of the running time $t-\tau$.

\begin{defn}\label{def:m}
Let $t>0$, $\tau \in[0,t)$, and $x, y\in \overline{\Omega}$. We define 
\begin{equation}\label{def:mtilde}
\begin{aligned}
    &\tilde{m}(\tau, t, y, x) 
    =\inf \left\lbrace \int_{\tau}^t L\left(\gamma(s),\dot{\gamma}(s)\right) \,ds: \gamma\in \mathcal{C}\left(x, t;\overline{\Omega}\right), \gamma\left(\tau \right)=y \right\rbrace.
\end{aligned}
\end{equation}
\end{defn}

\begin{rem}
Observe that $\tilde{m}(\tau, t, y, x)$ coincides with $m(t-\tau, y, x)$ as defined in \cite[Definition~1]{han_quantitative_2024}, so most properties of $\tilde{m}$ follow directly from those of $m$. The primary difference lies in the singularity: for $\tilde{m}$, if $y \neq x$ and $\tau \to t$, the cost blows up. Similarly, $m(t, y, x)$ in \cite{han_quantitative_2024} blows up as $t \to 0$ with $y \neq x$. In \cite{han_quantitative_2024}, the analysis about the averaging metric only involves $t>\ep$, so the singularity is irrelevant. Here, however, $\tau$ can be arbitrarily close to $t$ even for large $t$, and this singularity must be handled carefully in subsequent arguments. (See Lemma \ref{lem:ueptimelbd}, Lemma \ref{lem:tmbstarbd}, and the proof of Theorem \ref{thm:main1})
\end{rem}

Using Definition \ref{def:m} and Lemma \ref{lem:velocity bound}, we can rewrite the optimal control formula for $u^\varepsilon$ as the following:
\begin{equation}\label{eqn:uepocftm}
\begin{aligned}
    u^\varepsilon\left(x, t\right) 
    &=\inf\left\{\varepsilon \tm\left( \frac{\tau}{\varepsilon}, \frac{t}{\varepsilon}, \frac{y}{\varepsilon}, \frac{x}{\varepsilon}\right)+ \fep \left(y, \tau \right): 
    \left(y, \tau\right) \in \left(y, \tau\right) \in \partial Q^\varepsilon_t
    \right\}\\
    &= \inf\left\{\varepsilon \tm\left(\frac{\tau}{\varepsilon}, \frac{t}{\varepsilon}, \frac{y}{\varepsilon}, \frac{x}{\varepsilon}\right)+ \fep\left(y, \tau\right): \left|x-y\right|\leq M_0\left(t-\tau\right), 
    \left(y, \tau\right) \in \partial Q^\varepsilon_t
    \right\}.
\end{aligned}
\end{equation}

Note that in Definition \ref{def:m}, $\tm(\tau, t, y, x)$ is only defined for $t \in (0, \infty)$ and $\tau \in[0,t)$, and $x, y\in \overline{\Omega}$. We extend this metric to the larger domain $t \in (0, \infty), \tau \in [0, t)$, and $x, y \in \mathbb{R}^n$ 
as follows.
\begin{defn}
    Let $x, y\in \mathbb{R}^n$, $t > 0$, and $\tau \in [0,t)$. Define 
\begin{equation}\label{eq:defmstar}
     \tm^{\ast}\left(\tau, t, y, x\right)= \inf \left\lbrace \tm \left(\tau, t, \tilde{y}, \tilde{x}\right): \tilde{x}, \tilde{y} \in \partial \Omega, \tilde{x} - x \in Y, \tilde{y} - y \in Y \right\rbrace. 
\end{equation} 
\end{defn}

\begin{rem}
Note that $\tm^\ast(\tau, t, y, x)$ coincides with $m^\ast(t-\tau, y, x)$ defined in \cite[Definition 2]{han_quantitative_2024}.
\end{rem}

Since \(\tm^\ast(\tau, t, y, x) = m^\ast(t - \tau, y, x)\), where \(m^\ast\) is defined in \cite[Definition~2]{han_quantitative_2024}, we have the following result for the upper bound of $\tm^\ast$.

\begin{prop}[Restatement of {\cite[Proposition 2.2]{han_quantitative_2024}}] \label{prop:mstarbound}
    Assume \ref{itm:A1}--\ref{itm:A5}. Let $t \geq \delta > 0$ and $\tau \in [0, t - \delta]$ for some $\delta > 0$, and let $x, y \in \mathbb{R}^n$ satisfy $|x - y| \leq M (t - \tau)$ for some constant $M>0$.
Then there exists an absolutely continuous curve $\gamma : [0, t] \to \overline{\Omega}$ such that $\xi(\tau)=\tilde{y}$ and $\xi(t)=\tilde{x}$, for some $\tilde{x}, \tilde{y} \in \partial \Omega$ with $\tilde{x} - x \in Y, \tilde{y} - y \in Y$. Moreover, there exists a constant $C_b > 0$, depending only on $\partial \Omega$ and $n$, such that
    \begin{equation}
    \left|\dot{\gamma}(s)\right| \leq C_b \left(M+\frac{2\sqrt{n}}{\delta}\right) \quad \text{for all } s \in [0,t],
    \end{equation}
    and
    \begin{equation}
         \quad \displaystyle \tm^\ast \left(\tau, t, y, x\right) \leq \left( \frac{C_b^2}{2} \left(M+\frac{2\sqrt{n}}{\delta}\right)^2 +K_0 \right) (t-\tau).
    \end{equation}
\end{prop}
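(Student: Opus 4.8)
The plan is to reduce the statement to \cite[Proposition~2.2]{han_quantitative_2024} through the identity $\tm^\ast(\tau,t,y,x)=m^\ast(t-\tau,y,x)$ recorded in the remark above, and, for completeness, to recall the path construction on which that result rests.

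First I would justify the reduction. By Definition~\ref{def:m} and \eqref{eq:defmstar}, both the admissible curves and the running cost defining $\tm^\ast(\tau,t,y,x)$ depend on the pair $(\tau,t)$ only through the elapsed time $t-\tau$: the time shift $s\mapsto s-\tau$ is a bijection between $\gamma\in\mathrm{AC}([\tau,t];\ol\Om)$ and $\tilde\gamma\in\mathrm{AC}([0,t-\tau];\ol\Om)$ that preserves endpoints and the value $\int L(\gamma,\dot\gamma)\,ds$. Hence $\tm^\ast(\tau,t,y,x)=m^\ast(t-\tau,y,x)$. Since $t-\tau\ge\delta>0$ and $|x-y|\le M(t-\tau)$, the hypotheses of \cite[Proposition~2.2]{han_quantitative_2024} hold with time horizon $t-\tau$ in place of $t$, and its conclusion --- a curve with $\|\dot\gamma\|_{L^\infty}\le C_b\big(M+\tfrac{2\sqrt n}{\delta}\big)$, $C_b=C_b(n,\partial\Omega)$, realizing the claimed bound --- transfers back after undoing the shift, which affects neither $\|\dot\gamma\|_{L^\infty}$ nor the cost.

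For completeness I would sketch the construction behind that proposition. Since $\partial\Omega$ is nonempty and $\Z^n$-periodic, every translate $x+Y$ meets $\partial\Omega$; pick $\tilde x\in(x+Y)\cap\partial\Omega$ and $\tilde y\in(y+Y)\cap\partial\Omega$, so that, since $\mathrm{diam}(Y)=\sqrt n$,
\[
|\tilde x-\tilde y|\le |x-y|+2\sqrt n\le M(t-\tau)+2\sqrt n .
\]
Using the connectivity of $\ol\Om$ together with the $C^1$ boundary --- this is \cite[Lemma~A.1]{han_quantitative_2024}, re-proved here as Lemma~\ref{lem:A1} --- one joins $\tilde y$ to $\tilde x$ by a $C^1$ curve inside $\ol\Om$ of length at most $C_b|\tilde x-\tilde y|$, with $C_b$ depending only on $n$ and $\partial\Omega$. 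Reparametrizing at constant speed over $[\tau,t]$ and extending by a constant on $[0,\tau]$ yields a curve $\gamma:[0,t]\to\ol\Om$ with $\gamma(\tau)=\tilde y$, $\gamma(t)=\tilde x$, and
\[
|\dot\gamma(s)|\le \frac{C_b\,|\tilde x-\tilde y|}{t-\tau}\le C_b\Big(M+\frac{2\sqrt n}{\delta}\Big)\qquad\text{for all }s\in[0,t].
\]
Finally, \eqref{eqn:K_0L} gives
\[
\tm^\ast(\tau,t,y,x)\le\int_\tau^t L(\gamma(s),\dot\gamma(s))\,ds\le\int_\tau^t\Big(\tfrac{|\dot\gamma(s)|^2}{2}+K_0\Big)ds\le\Big(\tfrac{C_b^2}{2}\big(M+\tfrac{2\sqrt n}{\delta}\big)^2+K_0\Big)(t-\tau),
\]
which is the assertion.

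The only nontrivial ingredient, and the main obstacle were one to seek a self-contained proof, is the bounded-distortion path lemma: two points of $\ol\Om$ lying in neighboring unit cubes can be joined inside $\ol\Om$ by a curve whose length is controlled by a constant depending only on $n$ and $\partial\Omega$ times their Euclidean distance, despite the perforations. Since this is precisely \cite[Lemma~A.1]{han_quantitative_2024} (equivalently Lemma~\ref{lem:A1}), in the present setting the proposition is immediate once the time-shift identification is in place.
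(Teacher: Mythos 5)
Your reduction is correct and matches the paper's intent: the paper offers no proof of this proposition, stating it as a direct restatement of \cite[Proposition~2.2]{han_quantitative_2024} via the observation (made in a remark after Definition~\ref{def:m}) that $\tm^\ast(\tau,t,y,x)=m^\ast(t-\tau,y,x)$, which is exactly the time-shift identification you establish and use. Your sketch of the underlying construction also agrees with how the cited result is obtained; the only point worth tightening is that \cite[Lemma~A.1]{han_quantitative_2024} (Lemma~\ref{lem:A1} here) applies to a single excursion of the straight segment outside $\overline{\Omega}$, so to get a curve in $\overline{\Omega}$ of length $\lesssim C_b|\tilde x-\tilde y|$ one must apply it to each component of $[\tilde x,\tilde y]\setminus\overline{\Omega}$ and splice the pieces together --- a routine but necessary step that your phrasing compresses.
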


Further, for points $x, y \in \overline{\Omega}$, the cost given by $\tm^\ast$ only differs from that of $\tm$ by a constant.

\begin{prop}[{Restatement of \cite[Proposition 2.4]{han_quantitative_2024}}]
\label{prop:mstarm}
    Assume \ref{itm:A1}--\ref{itm:A5}. Let $t\geq 1$, $\tau \in [0, t-1]$, and $x, y\in \overline{\Omega}$ satisfy $|x-y| \leq M(t-\tau)$ for some constant $M>0$. Then, there is a constant $C=C\left(n, \partial \Omega, M, K_0\right)>0$ such that
    \[
    \left|\tm^\ast(\tau, t, x, y)-\tm(\tau, t, x, y)\right|<C. 
    \]
\end{prop}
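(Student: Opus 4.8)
The plan is to reduce to \cite[Proposition 2.4]{han_quantitative_2024} via the change of variables already noted above, $\tm(\tau,t,y,x)=m(t-\tau,y,x)$ and $\tm^\ast(\tau,t,y,x)=m^\ast(t-\tau,y,x)$, and then recall that argument while tracking constants. Write $s:=t-\tau\ge 1$. The estimate $|\tm^\ast-\tm|<C$ splits into the two inequalities $\tm^\ast(\tau,t,y,x)\le \tm(\tau,t,y,x)+C$ and $\tm(\tau,t,y,x)\le \tm^\ast(\tau,t,y,x)+C$, and I would prove both by the same \emph{endpoint surgery}. Fix boundary points $\tilde y,\tilde x\in\partial\Omega$ with $\tilde y-y,\ \tilde x-x\in Y$; such points exist because $\partial\Omega$ is nonempty and $\Z^n$-periodic, so every unit cube meets it (this is in any case needed for $\tm^\ast$ to be finite), and $|\tilde y-y|,|\tilde x-x|\le \tfrac{\sqrt n}{2}$. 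Starting from a (near-)minimizing curve for one of the two metrics, keep its trajectory and time parametrization on the long central interval $[\tau+\tfrac13,\,t-\tfrac13]$ (nonempty since $s\ge 1$) untouched, and modify it only on $[\tau,\tau+\tfrac13]$ and $[t-\tfrac13,t]$ so as to move its endpoints between $\{y,x\}$ and $\{\tilde y,\tilde x\}$.

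The surgery is cheap because of two uniform ingredients. First, a connecting estimate: by the $C^1$-regularity and $\Z^n$-periodicity of $\partial\Omega$ together with connectedness of $\Omega$, any $a,b\in\overline{\Omega}$ are joined by a curve in $\overline{\Omega}$ of length at most $C_b(|a-b|+1)$ with $C_b=C_b(n,\partial\Omega)$ (an iteration of \cite[Lemma A.1]{han_quantitative_2024} / Lemma~\ref{lem:A1}); traversing such a curve over the fixed time $\tfrac13$ and applying \eqref{eqn:K_0L} shows that whenever $|a-b|$ is bounded the replacement piece costs at most $C_1=C_1(n,\partial\Omega,K_0,M)$, while the piece it replaces costs at least $-\tfrac{K_0}{3}$. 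Second, a control on the (near-)minimizing curve near its two ends: for $\tm^\ast$ this is supplied directly by the explicit construction of Proposition~\ref{prop:mstarbound} (with $\delta=1$), whose curve satisfies $|\dot\gamma|\le C_b(M+2\sqrt n)$, and for $\tm$ one uses the corresponding velocity bound for minimizers of $\tm(\tau,t,y,x)$ when $|x-y|\le M(t-\tau)$, obtained as in the proof of Lemma~\ref{lem:velocity bound} from the subadditivity of $\tm$ and its Lipschitz regularity (with constant $\le C_M$) in the spatial and temporal arguments. Either way the relevant curve moves at most a bounded distance over a time-$\tfrac13$ window, so its endpoint at $\tau+\tfrac13$ (resp.\ $t-\tfrac13$) lies within bounded distance of $y$ (resp.\ $x$), and the replacement connector therefore costs $O(1)$ by the first ingredient; summing the two localized surgeries gives each inequality with $C=C(n,\partial\Omega,M,K_0)$. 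The a priori bound $\tm(\tau,t,y,x)\le C_M s$ used above follows from the same connecting estimate and \eqref{eqn:K_0L}.

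The step I expect to be the main obstacle is keeping the error \emph{additive}: since $\tm$ and $\tm^\ast$ are both of size comparable to $s$ in this regime, any modification that reparametrizes the \emph{whole} curve onto a time interval of different length (e.g.\ squeezing a duration-$s$ path into duration $s-\tfrac23$) multiplies the dominant $O(s)$ part of the cost by $\tfrac{s}{s-2/3}$ and thus introduces an error of size $\asymp s$, not $O(1)$. This is exactly why the surgery must be confined to two bounded-length sub-intervals near the endpoints, leaving the long central portion and its parametrization intact; only the two short connectors are genuinely rescaled, and their lengths, hence costs, are controlled independently of $s$. If one prefers to avoid the velocity bound for $\tm$-minimizers in the first inequality, the same effect is achieved by splitting off the endpoints via subadditivity and absorbing the resulting mismatch in time length by a \emph{local} reparametrization on a short sub-interval where $|\dot\gamma|^2$ is small — such a sub-interval exists by an averaging argument using only $\int_\tau^t|\dot\gamma(r)|^2\,dr\le C_M s$ — which again keeps the extra cost $O(1)$. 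A secondary point to verify is that the Lipschitz constant of $\tm$ in its endpoints and times, hence the velocity bound $M_1$, depends only on $n,\partial\Omega,M,K_0$, uniformly in $t,\tau,x,y$.
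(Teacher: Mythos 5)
The paper gives no proof of Proposition~\ref{prop:mstarm}: it is stated verbatim as a restatement of \cite[Proposition~2.4]{han_quantitative_2024}, justified by the change of variables $\tilde m(\tau,t,y,x)=m(t-\tau,y,x)$ and $\tilde m^\ast(\tau,t,y,x)=m^\ast(t-\tau,y,x)$, which you correctly identify at the outset. Your reconstruction of the underlying argument is nevertheless worth assessing, and the general strategy (localized endpoint surgery over two bounded-length windows) is the right one; in particular your averaging/pigeonhole alternative matches the paper's own technique in the proof of the closely related Proposition~\ref{prop:sptm}, where a cheap time-$\tfrac14$ sub-interval is located, compressed to length $\tfrac18$, and used to make room for the connector.

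However, your preferred first route has a genuine gap. You propose to control the (near-)minimizing curve near its endpoints by a pointwise velocity bound, citing Proposition~\ref{prop:mstarbound} for $\tilde m^\ast$. But the curve in Proposition~\ref{prop:mstarbound} is an \emph{explicit admissible test curve} used to produce an upper bound for $\tilde m^\ast$; it is not a minimizer or near-minimizer, so its Lipschitz bound says nothing about the curve on which you need to perform surgery. More fundamentally, a near-minimizer $\gamma$ only satisfies an integral bound $\int_\tau^t |\dot\gamma|^2\,ds \le C(t-\tau)$ via \eqref{eqn:K_0L}, which does not prevent it from moving an unbounded distance on a fixed window $[\tau,\tau+\tfrac13]$. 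Your suggestion to deduce the bound from ``Lipschitz regularity of $\tilde m$ in its arguments'' is also delicate: obtaining a uniform-in-$(\tau,t,x,y)$ Lipschitz constant for $\tilde m$ is essentially equivalent to the surgery estimate itself (and fails uniformly as $t-\tau\to 0$), so the argument risks being circular. The averaging alternative you flag at the end resolves all of this — it locates a cheap sub-interval somewhere along the curve (not necessarily near the endpoint) and inserts the connector there, needing no pointwise velocity control — and that is the route you should take as primary rather than as a fallback. If you use Proposition~\ref{prop:sptm} directly you will also need its analogue for perturbing the terminal endpoint $x$, which is not literally covered by the statement as written (it moves only the initial endpoint), but follows by the same construction applied at the other end.
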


The following proposition establishes a basic property of $\tilde{m}$: perturbing one endpoint of the curve within the unit cube changes the cost only by a constant. This fact will be used later.
\begin{prop}\label{prop:sptm}
    Assume \ref{itm:A1}--\ref{itm:A5}. Let $t\geq 1$, $\tau \in [0, t-1]$, and $x, y, z\in \overline{\Omega}$ satisfy $|x-y| \leq M(t-\tau)$ for some constant $M>0$, and $|y-z| \leq \sqrt{n}$. Then, there is a constant $C=C(n, \partial\Omega, M, K_0)$, such that
    \begin{equation}\label{eq:perturbtm}
    	\tm(\tau, t, z, x) \leq \tm(\tau, t, y, x) + C.
    \end{equation}
\end{prop}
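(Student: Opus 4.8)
The strategy is to take a minimizing curve $\gamma$ for $\tm(\tau,t,y,x)$, discard its behaviour on $[\tau,\tau+1]$ — which is legitimate precisely because $t-\tau\geq 1$ — and replace it there by a short detour inside $\overline\Omega$ that starts at $z$ instead of $y$ and rejoins $\gamma$ at time $\tau+1$. Since $L$ is continuous and coercive (see \eqref{eqn:K_0L}) and $\overline\Omega$ is closed, the infimum in \eqref{def:mtilde} is attained by some $\gamma\in\mathcal C(x,t;\overline\Omega)$ with $\gamma(\tau)=y$. The crucial ingredient — and the main obstacle of the argument — is a \emph{uniform velocity bound} for this minimizer: under the hypotheses $x,y\in\overline\Omega$, $|x-y|\leq M(t-\tau)$ and $t-\tau\geq1$, one has
\[
|\dot\gamma(s)|\leq R \qquad\text{for a.e. } s\in[\tau,t], \qquad R=R(n,\partial\Omega,M,K_0).
\]
This is the exact analogue for the cost function $\tm$ of Lemma \ref{lem:velocity bound} and of \cite[Lemma A.2]{han_quantitative_2024} in the state-constraint setting; it follows from the Euler--Lagrange equations and conservation of the energy $s\mapsto H\bigl(\gamma(s),D_vL(\gamma(s),\dot\gamma(s))\bigr)$ along the (state-constrained) minimizer, combined with the two-sided bounds \eqref{eqn:K_0H}, \eqref{eqn:K_0L} and the a priori estimate $\tm(\tau,t,y,x)\leq C(n,\partial\Omega,M,K_0)(t-\tau)$ furnished by Propositions \ref{prop:mstarbound} and \ref{prop:mstarm} (the hypothesis $t-\tau\geq1$ being what lets one absorb an additive constant into the $(t-\tau)$ term).

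Granting the velocity bound, put $p:=\gamma(\tau+1)\in\overline\Omega$ (when $t-\tau=1$ this is simply $p=x$). Then $|p-y|\leq\int_\tau^{\tau+1}|\dot\gamma|\,ds\leq R$, hence $|p-z|\leq|p-y|+|y-z|\leq R+\sqrt n$. Because $\Omega$ is open, connected and $\mathbb Z^n$-periodic with $C^1$ boundary, any two points of $\overline\Omega$ at Euclidean distance at most $R+\sqrt n$ can be joined by a curve lying in $\overline\Omega$ of length at most $c_0=c_0(n,\partial\Omega,R+\sqrt n)$ (a connectivity estimate of the type used in \cite[Lemma A.1]{han_quantitative_2024}); traversing such a curve at constant speed yields $\sigma_0\in\mathrm{AC}\bigl([\tau,\tau+1];\overline\Omega\bigr)$ with $\sigma_0(\tau)=z$, $\sigma_0(\tau+1)=p$ and $|\dot\sigma_0|\leq c_0$, so that $\int_\tau^{\tau+1}L(\sigma_0,\dot\sigma_0)\,ds\leq\tfrac12 c_0^2+K_0$ by \eqref{eqn:K_0L}.

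Finally, define $\gamma'\in\mathcal C(x,t;\overline\Omega)$ with $\gamma'(\tau)=z$ by setting $\gamma'=\sigma_0$ on $[\tau,\tau+1]$ and $\gamma'=\gamma$ on $[\tau+1,t]$. Using $\gamma'$ as a competitor in \eqref{def:mtilde}, the additivity of the action, the fact that $\gamma$ is a minimizer, and $\int_\tau^{\tau+1}L(\gamma,\dot\gamma)\,ds\geq-K_0$ from \eqref{eqn:K_0L}, we obtain
\begin{align*}
\tm(\tau,t,z,x)
&\leq \int_\tau^{\tau+1}L(\sigma_0,\dot\sigma_0)\,ds+\int_{\tau+1}^{t}L(\gamma,\dot\gamma)\,ds\\
&= \int_\tau^{\tau+1}L(\sigma_0,\dot\sigma_0)\,ds+\tm(\tau,t,y,x)-\int_{\tau}^{\tau+1}L(\gamma,\dot\gamma)\,ds
\;\leq\; \tm(\tau,t,y,x)+\tfrac12 c_0^2+2K_0,
\end{align*}
which is the claim with $C:=\tfrac12 c_0^2+2K_0=C(n,\partial\Omega,M,K_0)$. (If one prefers to avoid invoking the existence of a minimizer, the same argument applies verbatim to an $\varepsilon$-minimizer, letting $\varepsilon\to0^+$ at the end.)
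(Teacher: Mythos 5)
Your decomposition — replace the minimizer on the initial segment $[\tau,\tau+1]$ by a short detour from $z$ to $p=\gamma(\tau+1)$ and keep $\gamma$ afterward — is a reasonable idea, but the step you yourself flag as ``the crucial ingredient and main obstacle,'' the uniform bound $|\dot\gamma|\leq R(n,\partial\Omega,M,K_0)$ a.e.\ for a \emph{fixed-endpoint, state-constrained} minimizer of $\tm$, is a genuine gap and not available here. It is not the analogue of Lemma~\ref{lem:velocity bound}: that lemma bounds the velocity of a minimizer of $u^\varepsilon$, for which the initial point is also optimized, and its proof works by differentiating the globally Lipschitz value function $u^\varepsilon$ along the curve (via the dynamic programming principle and Lemma~\ref{lem:acdiff}); none of that machinery is in place for $\tm(\tau,t,y,\cdot)$, whose Lipschitz constant in the endpoint is exactly what one would need to prove first. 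Your alternative justification via Euler--Lagrange equations and conservation of the energy $s\mapsto H(\gamma(s),D_vL(\gamma(s),\dot\gamma(s)))$ also does not go through at the level of regularity assumed in this paper: $H$ is only continuous and convex in $p$, and the modification used to enforce \eqref{eqn:K_0H} need not be $C^1$, so $D_vL$ may not exist pointwise and the Euler--Lagrange system is at best a differential inclusion; on top of that, energy conservation along state-constrained extremals that touch $\partial\Omega$ requires additional structure (e.g.\ a differentiable Hamiltonian plus a maximum-principle analysis with BV multipliers) and is not automatic. The a priori estimate you invoke, $\tm(\tau,t,y,x)\leq C(t-\tau)$, only yields an $L^2$-type average bound on $\dot\gamma$ (via \eqref{eqn:K_0L}), not the pointwise $L^\infty$ bound your construction needs. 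Your fallback to $\varepsilon$-minimizers does not repair this, since an $\varepsilon$-minimizer carries no pointwise velocity control at all.

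The paper's proof is designed precisely to sidestep this issue. Instead of a velocity bound on the minimizer, it uses only the upper bound $\tm(\tau,t,y,x)\leq\bigl(\tfrac{C_b^2M^2}{2}+K_0\bigr)(t-\tau)$ together with a pigeonhole/averaging argument: since $t-\tau\geq1$, there must exist some $d\in\{0,\tfrac14,\tfrac12,\ldots\}$ with $\int_{\tau+d}^{\tau+d+1/4}L(\gamma,\dot\gamma)\,ds\leq\tfrac{C_b^2M^2}{2}+K_0$. One then inserts the $z$-to-$y$ detour by squeezing it into a length-$\tfrac18$ slot and speeding up $\gamma$ over that cheap quarter-interval, so the only quantitative input about $\gamma$ is the average cost bound, never its speed. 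If you want to salvage your route, you would need to either \emph{prove} the fixed-endpoint velocity bound under the paper's weak hypotheses on $H$ (a nontrivial regularity result in state-constrained calculus of variations) or adopt the paper's averaging trick.
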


\begin{proof} Let $\gamma: [0, t] \to \overline{\Omega}$ be an optimal path for $\tm(\tau, t, y, x)$, that is, $\gamma(\tau)=y$, $\gamma(t)=x$, and
    \[
    \tm(\tau, t, y, x)  = \int_\tau^t L\left(\gamma(s), \dot{\gamma}(s)\right)\,ds.
    \]
We provide an upper bound for $\widetilde{m}(\tau, t, y,x)$. Consider the curve $\xi: [0, t] \to \R^n$ defined by 
    \[
    \xi (s) = \begin{cases}
			y, & \text{if } \, s \in [0, \tau),\\
            \frac{s-\tau}{t-\tau}(x-y)+y, & \text{if } \,  s\in [\tau, t],
		 \end{cases}
    \]
    which satisfies $\left\Vert \dot{\xi}\right\Vert_{L^\infty([0, t])} \leq M$. 
    By applying \cite[Lemma~A.1]{han_quantitative_2024} 
to the line segment of \(\xi([\tau,t])\), we may modify $\xi$ to obtain a new path $\tilde{\xi}:[0,t] \to \overline{\Omega}$ with 
\begin{equation}\label{eqn:xivelubd}
	\tilde{\xi}(\tau)=y,\qquad  \tilde{\xi}(t)=x, \qquad \text{and}\qquad \left\Vert \dot{\tilde{\xi}}\right\Vert_{L^\infty([0, t])}  \leq C_b M
\end{equation}
for a constant $C_b>0$ depending only on $n$ and $\partial \Omega$. Therefore, by \eqref{eqn:K_0L},
    \begin{equation}\label{eqn:tmuptautxy}
    \tm(\tau, t, x, y) = \int_\tau^t L\left(\gamma(s), \dot{\gamma}(s)\right)\,ds \leq  \int_\tau^t L\left(\tilde{\xi}(s), \dot{\tilde{\xi}}(s)\right)\,ds \leq \left(\frac{C_b^2M^2}{2} + K_0\right)(t-\tau)   . 
    \end{equation}
    Furthermore, we claim that there exists $\displaystyle d\in \left\{0, \frac{1}{4},\frac{1}{2}, \frac{3}{4}, \cdots, \lfloor t-\tau\rfloor -\frac{1}{4} \right\}$ such that 
\begin{equation}\label{eqn:ld4ubd}
    \int_{\tau+d}^{\tau+d+\frac{1}{4}} L\left(\gamma(s), \dot{\gamma} (s)\right) \,ds\leq \frac{C_b^2 M^2}{2} +K_0.
\end{equation}
Indeed, if \eqref{eqn:ld4ubd} is not true, then  
\begin{equation}\label{eqn:Ltautlb}
    \int_\tau^{\tau +\lfloor t-\tau \rfloor} L\left(\gamma(s), \dot{\gamma} (s)\right)\,ds \geq 4 \lfloor t -\tau\rfloor \left(\frac{C_b^2 M^2}{2} +K_0\right).
\end{equation}
Then, from \eqref{eqn:tmuptautxy},
\begin{equation}
\begin{aligned}
    \left(\frac{C_b^2M^2}{2} + K_0\right)(t-\tau)  &\geq \int_\tau^{\tau +\lfloor t-\tau \rfloor} L\left(\gamma(s), \dot{\gamma} (s)\right)\,ds + \int_{\tau +\lfloor t-\tau \rfloor}^t L\left(\gamma(s), \dot{\gamma} (s)\right)\,ds\\
    &\geq 4 \lfloor t -\tau \rfloor \left(\frac{C_b^2M^2}{2} + K_0\right) -K_0,\\
\end{aligned}
\end{equation}
where the last inequality comes from \eqref{eqn:K_0L} and \eqref{eqn:Ltautlb}.
On the other hand, since $t -\tau \geq 1$, 
\[
4 \lfloor t -\tau \rfloor \left(\frac{C_b^2M^2}{2} + K_0\right) -K_0 > \left(\frac{C_b^2M^2}{2} + K_0\right)(t-\tau),
\]
which is a contradiction.

By a similar argument as in \eqref{eqn:xivelubd}, there exists a path $\eta:[0,1] \to \overline{\Omega}$ connecting $z$ to $y$ such that 
 \begin{equation}\label{eqn:etavelubd}
	\eta(0) = z, \qquad \eta(1) = y,\qquad \text{and}\qquad \left\|\dot{\eta}\right\|_{L^\infty\left([0,1]\right)} \leq \sqrt{n} C_b.
 \end{equation}
We concatenate \(\eta\) and \(\gamma\) to construct a path connecting \(z\) to \(x\), adjusting their time parametrizations so that each takes $\frac{1}{8}$ units of time, with the total duration of the resulting path equal to \(t - \tau\).
Precisely, define \(\zeta : [0,t] \to \overline{\Omega}\) by
\begin{equation}
  \zeta (s) :=  
  \begin{cases}
  \begin{aligned}
  &z, &&\text{if}\; s\in [0,\tau],\\
  &\eta\left(8(s-\tau\right)),&&\text{if}\; s\in \left[\tau, \tau + \tfrac{1}{8}\right],\\
  &\gamma \left(s-\tfrac{1}{8}\right), &&\text{if}\; s\in \left[\tau+\tfrac{1}{8}, \tau+d+\tfrac{1}{8}\right], \\
  &\gamma \left(2\left(s-\left(\tau+d+\tfrac{1}{8}\right)\right)+\tau +d\right), &&\text{if}\; s\in \left[\tau+d+\tfrac{1}{8}, \tau + d + \tfrac{1}{4}\right], \\
  &\gamma \left(s\right),&& \text{if}\; s\in \left[\tau + d+\tfrac{1}{4}, t\right],
  \end{aligned}
  \end{cases}
\end{equation}
which is an admissible path for $\tm(\tau, t, z, x)$. We compute
\begin{align}\label{eq:estzetaA}
	A : = \int_\tau^{\tau+\frac{1}{8}} L(\zeta(s), \dot{\zeta}(s))\;ds 
	= 
	\frac{1}{8} \int_0^1 L\left(\eta(s), 8\dot{\eta}(s)\right)\;ds  
	\leq 
	4nC_b^2 + \frac{K_0}{8}
\end{align}
using \eqref{eqn:K_0L} and \eqref{eqn:etavelubd}. Similarly, we have 
\begin{align}\label{eq:estzetaB}
	B := \int_{\tau+d+\frac{1}{8}}^{\tau+d+\frac{1}{4}} L(\zeta(s), \dot{\zeta}(s))\;ds 
	&= 
	\frac{1}{2}\int_{\tau+d}^{\tau+d+\frac{1}{4}} L\left(\gamma(s), 2\dot{\gamma}(s)\right)\;ds   \nonumber \\
	&\mkern-40mu \leq 2\int_{\tau+d}^{\tau+d+\frac{1}{4}} \left(\frac{|\dot{\gamma}(s)|^2}{2} - K_0\right)\;ds + \frac{5K_0}{8}\nonumber  \\
	&\mkern-40mu \leq 2 \int_{\tau+d}^{\tau+d+\frac{1}{4}}L(\gamma(s), \dot{\gamma}(s))\;ds + \frac{5K_0}{8} \leq C_b^2M^2 + \frac{21K_0}{8}
\end{align}
by \eqref{eqn:K_0L} and \eqref{eqn:ld4ubd}.
Using \eqref{eq:estzetaA}, \eqref{eq:estzetaB}, and \eqref{eqn:K_0L}, we compare 
\(\tilde{m}(\tau, t, z, x)\) and \(\tilde{m}(\tau, t, y, x)\): 
\begin{align*}
	\widetilde{m}(\tau, t, z, x) - \widetilde{m}(\tau, t, y, x) 
	&\leq 
	\int_\tau ^ t L\left(\zeta(s), \dot{\zeta}(s)\right)\;ds - \int_{\tau}^t L(\gamma(s), \dot{\gamma}(s))\;ds \\
	&\mkern-40mu= A + B - \int_{\tau+d}^{\tau+d+\frac{1}{4}} L(\gamma(s), \dot{\gamma}(s))\;ds \\
	&\mkern-40mu\leq \left(4nC_b^2 + \frac{K_0}{8}\right) + \left(C_b^2 M^2 + \frac{21K_0}{8}\right) + \frac{K_0}{4} = (M^2+4n)C_b^2 + 3K_0. 
\end{align*}
Therefore the conclusion \eqref{eq:perturbtm} follows, with $C=(M^2+4n)C_b^2 + 3K_0$.
\end{proof}

Moreover, the averaging of the cost function admits a well-defined limit together with the desired convergence rate.

\begin{prop}[{Restatement of \cite[Proposition~4.1 and Theorem~4.2]{han_quantitative_2024}}]\label{prop:mbarrate}
Assume \ref{itm:A1}--\ref{itm:A5}. For any \(t > 0\), \(\tau \in [0, t)\), and \(x, y \in \mathbb{R}^n\) with \(|x - y| \leq M (t - \tau)\), for some constant $M>0$, the limit
\[
\overline{m}^\ast(\tau, t, y, x):=\lim_{k\to \infty} \frac{1}{k} \tm^\ast (k\tau, kt, ky, kx)
\]
exists. Moreover, there exists a constant 
$C = C(n, \partial \Omega, M, K_0) > 0$ such that for all \(\varepsilon \in (0, t-\tau]\),
\[
 \left|\overline{m}^\ast \left(\tau, t, y, x\right) - \varepsilon \tm^\ast \left(\frac{\tau}{\varepsilon}, \frac{t}{\varepsilon}, \frac{y}{ \varepsilon}, \frac{x}{\varepsilon}\right)\right| \leq C \varepsilon.
\]
\end{prop}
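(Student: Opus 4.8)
The plan is to deduce the statement from \cite[Proposition~4.1 and Theorem~4.2]{han_quantitative_2024} by means of the identification $\tm^\ast(\tau, t, y, x) = m^\ast(t-\tau, y, x)$ recorded in the remark immediately after \eqref{eq:defmstar}. Fix $t>0$, $\tau\in[0,t)$, $x,y\in\R^n$ with $|x-y|\le M(t-\tau)$, and set $T:=t-\tau>0$. For every $k\in\N$ one has $kt-k\tau=kT$, so
\[
\frac1k\,\tm^\ast(k\tau, kt, ky, kx)=\frac1k\,m^\ast(kT, ky, kx),
\]
and for $\varepsilon\in(0,T]$,
\[
\varepsilon\,\tm^\ast\!\Big(\tfrac{\tau}{\varepsilon},\tfrac{t}{\varepsilon},\tfrac{y}{\varepsilon},\tfrac{x}{\varepsilon}\Big)=\varepsilon\,m^\ast\!\Big(\tfrac{T}{\varepsilon},\tfrac{y}{\varepsilon},\tfrac{x}{\varepsilon}\Big),
\]
while the hypothesis reads $|x-y|\le MT$. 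Hence the existence of $\overline{m}^\ast(\tau,t,y,x)$ and the bound $C\varepsilon$ are precisely \cite[Proposition~4.1 and Theorem~4.2]{han_quantitative_2024} applied with their time variable equal to $T$; in particular $\overline{m}^\ast(\tau,t,y,x)$ depends on $(\tau,t)$ only through $T=t-\tau$, and the constant $C$ is the one supplied there.

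For completeness I would recall the two mechanisms behind those statements. Existence of the limit rests on an almost-subadditivity property of $m^\ast$: two near-optimal curves can be concatenated through an intermediate point, and since both endpoints in the infimum defining $\tm^\ast$ (hence $m^\ast$) range over $\partial\Omega+Y$, the gluing can be carried out inside a single unit cube at an additive cost $C=C(n,\partial\Omega,M,K_0)$ — this is the mechanism already used in Proposition~\ref{prop:sptm}, built on \cite[Lemma~A.1]{han_quantitative_2024} and the quadratic bounds \eqref{eqn:K_0L}. Together with the natural scaling of \eqref{eq:defmstar}, a Fekete-type argument for almost-subadditive sequences yields $\overline{m}^\ast$. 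The quantitative rate then follows from the optimal-rate scheme for periodic homogenization (in the spirit of \cite{tran_optimal_2021}, as adapted in \cite{han_quantitative_2024}): one estimates $\varepsilon\,m^\ast(T/\varepsilon,y/\varepsilon,x/\varepsilon)-\overline{m}^\ast$ from above and below by testing against near-optimal trajectories for the $\varepsilon$-problem and for the effective metric respectively, controlling the discrepancy via the cell problem and a mollification argument, and using \eqref{eqn:K_0L}, \eqref{eq:Hbar-Lbar} to keep every error term of size $O(\varepsilon)$ with a constant of the form $C(n,\partial\Omega,M,K_0)$.

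The one point I would be most careful about is the bookkeeping of constants: one must confirm that the constants in \cite[Proposition~4.1 and Theorem~4.2]{han_quantitative_2024} depend only on $n,\partial\Omega,M,K_0$ and are uniform in $T$, so that the estimate does not degenerate in the singular regime $T=t-\tau\to 0^+$ of $\tm^\ast$. This is indeed the case — the constant enters only through the unit-cube gluing (geometry of $\partial\Omega$ and dimension $n$), the admissible velocity scale $M$, and the structural constant $K_0$ — and the singularity is in any case harmless here, since the limit concerns $k\to\infty$ (so $kT\to\infty$) and the rate is stated only for $\varepsilon\in(0,T]$, i.e. $T/\varepsilon\ge 1$. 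No new analytic difficulty arises; the substance is already contained in \cite{han_quantitative_2024}.
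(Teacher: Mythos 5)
Your proposal is correct and follows the paper's own route: the paper labels the proposition a ``restatement'' precisely because, as noted in the remark after \eqref{eq:defmstar}, $\tm^\ast(\tau,t,y,x)=m^\ast(t-\tau,y,x)$, so both claims are the scaled versions (with $T=t-\tau$, and $kT$, $T/\varepsilon$ respectively) of \cite[Proposition~4.1 and Theorem~4.2]{han_quantitative_2024}, with the constant depending only on $n,\partial\Omega,M,K_0$ as you observe. The additional sketch you give of the subadditivity/Fekete mechanism and the rate argument is accurate but not part of what the paper supplies, since it simply imports those results.
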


We emphasize that the above rate is valid only for $\varepsilon \in [0, t-\tau]$. Hence, when considering all $\tau \in[0, t)$, the convergence rate is not uniform. The next lemma, though elementary, plays a crucial role in handling the regime $t-\tau< \ep$. In particular, while $\overline{m}^\ast$ may blow up as $\tau \to t$, an appropriate lower bound will be essential for establishing the main theorem.

\begin{lem}\label{lem:tmbstarbd}
Assume \ref{itm:A1}--\ref{itm:A5}. Let $t\geq\ep >0$ and $\tau \in (t-\ep, t)$. Let $x, y \in \R^n$ with $|x- y| \leq M|t-\tau|$ for some constant $M>0$. Then,
    \[
    \overline{m}^\ast(\tau, t, y, x) \geq- C\ep,
    \]
    for some constant $C=C\left(n, \partial \Omega, M, K_0\right)>0$.
\end{lem}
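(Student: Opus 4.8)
The plan is to use nothing but the uniform lower bound on the Lagrangian. By \eqref{eqn:K_0L} we have $L(z,v)\ge -K_0$ for all $(z,v)\in\R^n\times\R^n$, so for any $0\le\sigma<s$ and any admissible curve $\gamma$ appearing in Definition \ref{def:m}, the running cost satisfies
\[
\int_\sigma^s L(\gamma(r),\dot\gamma(r))\,dr \;\ge\; -K_0\,(s-\sigma).
\]
Taking the infimum over such curves gives $\tm(\sigma,s,\tilde y,\tilde x)\ge -K_0(s-\sigma)$ for every $\tilde x,\tilde y\in\ol\Om$ (the bound holding vacuously if no admissible curve exists), and then taking the infimum over the unit-cube translates of the endpoints in \eqref{eq:defmstar} yields
\[
\tm^\ast(\sigma,s,y',x') \;\ge\; -K_0\,(s-\sigma), \qquad x',y'\in\R^n .
\]

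\textbf{Main steps.} First I would apply the last display with $\sigma=k\tau$, $s=kt$, $x'=kx$, $y'=ky$, which is legitimate since $|kx-ky|\le M(kt-k\tau)$; dividing by $k$ gives $\tfrac1k\,\tm^\ast(k\tau,kt,ky,kx)\ge -K_0(t-\tau)$ for every $k\ge 1$. By Proposition \ref{prop:mbarrate} (applicable because $|x-y|\le M(t-\tau)$) the limit defining $\overline m^\ast(\tau,t,y,x)$ exists, so letting $k\to\infty$ gives
\[
\overline m^\ast(\tau,t,y,x) \;\ge\; -K_0\,(t-\tau).
\]
Finally, since $\tau\in(t-\ep,t)$ we have $0<t-\tau<\ep$, hence $\overline m^\ast(\tau,t,y,x) \ge -K_0(t-\tau) > -K_0\ep$, and the claim follows with $C=K_0$. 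This constant depends only on $H$ and $\|Dg\|_{L^\infty(\R^n)}$ through \eqref{eqn:K_0L}, and in particular may be taken of the stated form $C=C(n,\partial\Omega,M,K_0)$ (with trivial dependence on $n$, $\partial\Omega$, $M$).

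\textbf{Main obstacle.} There is essentially no obstacle in proving the estimate: the only point requiring care is that the bound $-K_0(s-\sigma)$ survives both infima (over curves and over unit-cube translates of the endpoints) and the rescaling by $1/k$, which it does precisely because it is independent of the endpoints. I do not need the upper bounds of Proposition \ref{prop:mstarbound}, Proposition \ref{prop:mstarm}, or any finer structure of $\overline m^\ast$; the estimate is a one-line consequence of the coercivity of $H$ (equivalently, the boundedness below of $L$). The point of the lemma is its later use: combined with the uniform lower bound on $\tau$ away from $t$ from Lemma \ref{lem:ueptimelbd}, it controls the singular regime $t-\tau<\ep$ in the proof of Theorem \ref{thm:main1}.
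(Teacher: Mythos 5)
Your proof is correct and rests on the same two ingredients as the paper's: the uniform lower bound $L\ge -K_0$ from \eqref{eqn:K_0L} and the link between $\tm^\ast$ and $\overline m^\ast$ supplied by Proposition~\ref{prop:mbarrate}. The only difference is which part of Proposition~\ref{prop:mbarrate} is used: the paper applies the quantitative estimate $\left|\overline m^\ast - \ep'\tm^\ast(\cdot/\ep')\right|\le C\ep'$ with a small auxiliary $\ep'\in(0,t-\tau]$, obtaining the constant $K_0+C$, whereas you feed the pointwise lower bound $\tm^\ast(k\tau,kt,ky,kx)\ge -K_0\,k(t-\tau)$ directly into the defining limit $\overline m^\ast=\lim_k\frac1k\tm^\ast(k\tau,kt,ky,kx)$, giving the marginally cleaner constant $K_0$. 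Both are valid and equivalent for the purpose of Theorem~\ref{thm:main1}; your route sidesteps the error term and is slightly more elementary.
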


\begin{proof}
Choose $\ep' \in(0, t-\tau]$ small. Note that $0 < \ep'< t-\tau <\ep$.
By Proposition \ref{prop:mbarrate}, there is a constant 
\(C = C(n, \partial \Omega, M, K_0) > 0\) such that
\begin{align*}
\overline{m}^\ast(\tau, t, y, x) 
    &\ge \varepsilon' \, \tilde{m}^\ast\left(\frac{\tau}{\varepsilon'}, \frac{t}{\varepsilon'}, \frac{y}{\varepsilon'}, \frac{x}{\varepsilon'}\right) - C \varepsilon'  \\
    &\ge - (t-\tau) K_0 - C \varepsilon' \ge - (K_0 + C) \varepsilon,
\end{align*}
where the second inequality follows from \eqref{eqn:K_0L} to obtain the lower bound for \(\tilde{m}^\ast\), and \(t-\tau < \varepsilon\).
%
\end{proof}


\subsection{The value function associated with the average of the extended metric}
So far, no connection has been established between the limit $\overline{m}^\ast$ and the solution \(u\) of \eqref{eqn:PDE_limit}, which admits a Hopf–Lax type representation as in \eqref{eq:HLubar}. 
We now show that replacing the running cost in the Hopf-Lax representation \eqref{eq:HLubar} of $u$ with $\overline{m}^\ast$ still yields the limiting solution $u$. The case $t-\tau<\ep$ turns out to be delicate, and Lemma \ref{lem:ueptimelbd} plays a key role in handling it.

\begin{defn} Let $t > 0$ and $x \in \mathbb{R}^n$. Define
\begin{equation}\label{eqn:defubar}
\overline{u}(x, t):=
\begin{cases}
\displaystyle \inf_{\tau\in [0,t), y\in \R^n} 
	\Big\lbrace 
		\overline{m}^\ast \left(\tau, t, y, x \right) +f(y, \tau): |x -y | \leq M{_0} (t-\tau) 
	\Big\rbrace , & \text{if } t > 0, \\
	g(x), 	        & \text{if } t = 0,
\end{cases}
\end{equation}
where $M_0=M_0\left(n,\partial \Omega, H, \|Dg\|_{L^\infty(\mathbb{R}^n)}\right)>0$ is the constant from Lemma \ref{lem:velocity bound}.
\end{defn}

Before proving $\ol{u}=u$, we first show a property of $\ol{u}$ that will be useful in the proof.
\begin{lem}\label{lem:ubarlequbar}
    Assume \ref{itm:A1}--\ref{itm:A5}. 
    For all $(x,t)\in \R^n\times [0,\infty)$, $\ol{u}(x,t) \leq \overline{b}(x)$.
\end{lem}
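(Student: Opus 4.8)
The plan is to mimic the proof of part~$\mathrm{(a)}$ of Lemma~\ref{lem:DPPHbar}: keep the spatial endpoint fixed at $y=x$, push the terminal time $\tau$ up towards $t$, and show that the metric cost $\overline{m}^\ast(\tau,t,x,x)$ then vanishes, so that $\overline{u}(x,t)$ is controlled by $f(x,\tau)=\overline{b}(x)$.

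First I would dispose of the case $t=0$, where $\overline{u}(x,0)=g(x)\leq\overline{b}(x)$ is exactly \ref{itm:A5}. For $t>0$, fix $x\in\R^n$ and an arbitrary $\tau\in(0,t)$. The pair $(\tau,y)=(\tau,x)$ is admissible in the infimum defining $\overline{u}(x,t)$ in \eqref{eqn:defubar}, since $|x-x|=0\leq M_0(t-\tau)$; moreover $\tau>0$ forces $f(x,\tau)=\overline{b}(x)$ by the definition of $f$. Hence
\[
\overline{u}(x,t)\leq \overline{m}^\ast(\tau,t,x,x)+\overline{b}(x)\qquad\text{for every }\tau\in(0,t),
\]
so the lemma reduces to the elementary estimate $\overline{m}^\ast(\tau,t,x,x)\leq K_0(t-\tau)$ followed by letting $\tau\to t^-$.

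To obtain that estimate I would return to the definition $\overline{m}^\ast(\tau,t,x,x)=\lim_{k\to\infty}\tfrac1k\,\tm^\ast(k\tau,kt,kx,kx)$, where the limit exists by Proposition~\ref{prop:mbarrate} (applied with $|x-x|=0\leq M(t-\tau)$ for any $M>0$). Since $\partial\Omega$ is nonempty and $\Z^n$-periodic, for each $k$ there is a point $z_k\in(kx+Y)\cap\partial\Omega$ (take a lattice translate of a fixed boundary point); choosing $\widetilde x=\widetilde y=z_k$ in \eqref{eq:defmstar} and using the constant curve $\gamma\equiv z_k$ on $[k\tau,kt]$ together with the bound $L(\cdot,0)\leq K_0$ from \eqref{eqn:K_0L} gives
\[
\tm^\ast(k\tau,kt,kx,kx)\leq \tm(k\tau,kt,z_k,z_k)\leq \int_{k\tau}^{kt}L(z_k,0)\,ds\leq K_0\,k(t-\tau).
\]
Dividing by $k$ and sending $k\to\infty$ yields $\overline{m}^\ast(\tau,t,x,x)\leq K_0(t-\tau)$; substituting this into the previous display and letting $\tau\to t^-$ gives $\overline{u}(x,t)\leq\overline{b}(x)$.

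I do not expect a genuine obstacle here: the argument is essentially formal, and the only step needing a short justification is the existence, for every $q\in\R^n$, of a point of $\partial\Omega$ inside the unit cube $q+Y$, which is immediate from the $\Z^n$-periodicity of $\partial\Omega$. An essentially equivalent alternative would be to invoke the convergence rate of Proposition~\ref{prop:mbarrate} with $\varepsilon=t-\tau$ and bound the resulting unit-time metric $\tm^\ast$ by $K_0$ via the same constant-curve argument.
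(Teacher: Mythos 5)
Your proof is correct and follows essentially the same strategy as the paper: fix the spatial endpoint at $y=x$, push $\tau$ towards $t$, and make the metric term $\overline{m}^\ast(\tau,t,x,x)$ vanish by sitting a constant curve at a nearby lattice translate of a boundary point, using $L(\cdot,0)\leq K_0$. The only minor variant is that you bound $\overline{m}^\ast$ directly through its defining $k\to\infty$ limit via $\tm^\ast(k\tau,kt,kx,kx)\leq K_0\,k(t-\tau)$, whereas the paper invokes the one-sided error bound of Proposition~\ref{prop:mbarrate} with $\varepsilon=t-\tau$ and then bounds the resulting unit-time $\tm^\ast$ by the same constant-curve argument; both are valid, and you even note the equivalence at the end.
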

\begin{proof} For $x\in \R^n$ and $t=0$, $\ol{u}(x, t)= g(x)\leq \ol{b}(x)$ by \ref{itm:A5}.
Let $x\in \R^n$ and $t>0$. Choose $y=x$ and $\tau=t-\ep$ for $\ep>0$ in \eqref{eqn:defubar} to obtain
    \begin{equation}\label{eqn:ubarubbtmstar}
    \begin{aligned}
    \ol{u}(x, t) \leq  
    \overline{m}^\ast\left(t-\ep,t, x,x\right)+f(x, t-\ep)
    \leq   \ep \tm^\ast \left(\frac{t-\ep}{\ep},\frac{t}{\ep}, \frac{x}{\ep}, \frac{x}{\ep}\right)	
    +C\ep
    +f(x, t-\ep),\\
    \end{aligned}        
    \end{equation}
    where the second inequality follows from Proposition \ref{prop:mbarrate}.
    Choose $\tilde{x} \in \partial\ol\Om_\ep$ such that $\frac{\tilde{x}}{\ep}-\frac{x}{\ep} \in Y$. Then by the definiton of $\tm^\ast$,
    \begin{equation*}
    \tm^\ast \left(\frac{t-\ep}{\ep},\frac{t}{\ep}, \frac{x}{\ep}, \frac{x}{\ep}\right) \leq \tm \left(\frac{t-\ep}{\ep},\frac{t}{\ep}, \frac{\tilde{x}}{\ep}, \frac{\tilde{x}}{\ep}\right).
    \end{equation*}
    Combining with \eqref{eqn:ubarubbtmstar}, we have
    \begin{equation}\label{eqn:ubarubbtm}
    \ol{u}(x, t) \leq   \ep \tm \left(\frac{t-\ep}{\ep},\frac{t}{\ep}, \frac{\tilde{x}}{\ep}, \frac{\tilde{x}}{\ep}\right)+f(x, t-\ep) + C \ep.        
    \end{equation}
    Choose $\gamma \in \mathcal{C}\left(\frac{t}{\ep}, \frac{\tilde{x}}{\ep}; \ol\Om\right)$ defined by $\gamma \equiv \frac{\tilde{x}}{\ep}$, and by the definition of $\tm$, we have
    \begin{equation}\label{eqn:tmubd}
    \begin{aligned}
    \tm \left(\frac{t-\ep}{\ep},\frac{t}{\ep}, \frac{\tilde{x}}{\ep}, \frac{\tilde{x}}{\ep}\right) 		&\leq  
   \int_{\frac{t-\ep}{\ep}}^{\frac{t}{\ep}}\ol{L}\left(\gamma(s),\dot{\gamma}(s)\right) \, ds
   		\leq \ol{L}\left(\frac{\tilde{x}}{\ep}, 0 \right)
	\leq  C_1,    
    \end{aligned}        
    \end{equation}
    where $C_1=C_1(H)>0$ is from \eqref{eq:H-lower-bd}.
    By \eqref{eqn:ubarubbtm} and \eqref{eqn:tmubd}, $\ol{u} (x,t) \leq C\ep+f(x, t-\ep)$,
    and sending $\ep \to 0$, we have $\ol{u}(x,t) \leq \ol{b}(x)$. 
\end{proof}

\begin{lem}\label{lem:ubarequ}
    Assume \ref{itm:A1}--\ref{itm:A5}. Then $\overline{u}(x, t) = u(x, t)$ for any $t \geq 0 $ and $x \in \mathbb{R}^n$.
\end{lem}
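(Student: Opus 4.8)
The plan is to prove the two bounds $\overline u(x,t)\le u(x,t)$ and $\overline u(x,t)\ge u(x,t)$ separately, in each case by comparing the quantity in question with $u^\varepsilon$ and letting $\varepsilon\to 0$. The case $t=0$ is immediate from the definitions, so fix $t>0$ and $x\in\R^n$. For $\varepsilon>0$ choose $x_\varepsilon\in (x+\varepsilon Y)\cap\overline\Omega_\varepsilon$ (nonempty since $\Omega$ is a nonempty $\Z^n$-periodic open set); then $x_\varepsilon\to x$, and combining the qualitative homogenization of \cite{Horie1998Homogenization} with the Lipschitz continuity of $u$ gives $u^\varepsilon(x_\varepsilon,t)\to u(x,t)$. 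Three ingredients connect the two sides: (i) the representation \eqref{eqn:uepocftm}, by which any admissible pair $(y,\tau)\in\partial Q^\varepsilon_t$ yields $u^\varepsilon(x_\varepsilon,t)\le \varepsilon\,\tm(\tfrac\tau\varepsilon,\tfrac t\varepsilon,\tfrac y\varepsilon,\tfrac{x_\varepsilon}\varepsilon)+f_\varepsilon(y,\tau)$, with equality for a minimizing pair; (ii) Propositions~\ref{prop:mstarbound}, \ref{prop:mstarm}, \ref{prop:sptm}, and \ref{prop:mbarrate}, which --- provided $t-\tau\ge\varepsilon$ --- let one pass between $\varepsilon\,\tm$, $\varepsilon\,\tm^\ast$, and $\overline m^\ast(\tau,t,y,x)$ and perturb endpoints within a unit cube, each time at a cost $O(\varepsilon)$ (and from which $\overline m^\ast$ is continuous on $\{\tau<t\}$, being a locally uniform limit of the continuous functions $\varepsilon\,\tm^\ast(\tfrac\cdot\varepsilon,\dots)$); and (iii) the identity $\overline b(y)=\min_{z\in\partial\Omega}b(y,z)$ together with the $\Z^n$-periodicity of $b(y,\cdot)$ from \ref{itm:A4}: if $z^\ast$ attains $\min_zb(y_0,z)$ and $\tilde z\in\partial\Omega\cap(\tfrac{y_0}\varepsilon+Y)$ is the representative of $z^\ast$ modulo $\Z^n$, then $y_\varepsilon:=\varepsilon\tilde z\in\partial\Omega_\varepsilon$ satisfies $|y_\varepsilon-y_0|\le\varepsilon\sqrt n$ and $f_\varepsilon(y_\varepsilon,\tau)=b(y_\varepsilon,z^\ast)\le\overline b(y_0)+\omega_b(\varepsilon\sqrt n)$.

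For $\overline u(x,t)\le u(x,t)$: if $\overline u(x,t)=\overline b(x)$, this follows from the constraint $u\le\overline b$ built into \eqref{eqn:PDE_limit}. If $\overline u(x,t)<\overline b(x)$, I would first note --- using $\overline m^\ast(\tau,t,y,x)\ge -C(t-\tau)$ from Lemma~\ref{lem:tmbstarbd}, the constraint $|x-y|\le M_0(t-\tau)$ forcing $y\to x$, and $f(y,\tau)=\overline b(y)\to\overline b(x)$ as $\tau\to t$ --- that any admissible $(\tau,y)$ with $\tau$ near $t$ gives $\overline m^\ast(\tau,t,y,x)+f(y,\tau)$ near $\overline b(x)>\overline u(x,t)$; hence for $\delta$ small every $\delta$-near-minimizer $(\tau_0,y_0)$ of \eqref{eqn:defubar} has $\tau_0\le t-\rho$ for some $\rho=\rho(x,t)>0$. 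Fix such $(\tau_0,y_0)$, let $\tilde y,\tilde x\in\partial\Omega$ be near-optimal for $\tm^\ast(\tfrac{\tau_0}\varepsilon,\tfrac t\varepsilon,\tfrac{y_0}\varepsilon,\tfrac x\varepsilon)$ (finite by Proposition~\ref{prop:mstarbound}), set $\hat x_\varepsilon:=\varepsilon\tilde x\in\partial\Omega_\varepsilon$ (so $\hat x_\varepsilon\to x$), and take $y_\varepsilon$ to be the boundary point from (iii) when $\tau_0>0$, or any point of $\overline\Omega_\varepsilon$ within $\varepsilon\sqrt n$ of $y_0$ when $\tau_0=0$. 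Plugging $(y_\varepsilon,\tau_0)$ into \eqref{eqn:uepocftm} for $u^\varepsilon(\hat x_\varepsilon,t)$, moving the starting point from $\tilde y$ to $\tfrac{y_\varepsilon}\varepsilon$ via Proposition~\ref{prop:sptm}, and applying Propositions~\ref{prop:mstarm} and \ref{prop:mbarrate} (valid since $t-\tau_0\ge\rho>\varepsilon$), one gets $u^\varepsilon(\hat x_\varepsilon,t)\le\overline m^\ast(\tau_0,t,y_0,x)+f(y_0,\tau_0)+o(1)\le\overline u(x,t)+\delta+o(1)$. Since $u^\varepsilon(\hat x_\varepsilon,t)\to u(x,t)$, letting $\varepsilon\to 0$ then $\delta\to 0$ gives $u(x,t)\le\overline u(x,t)$.

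For $\overline u(x,t)\ge u(x,t)$: if $u(x,t)=\overline b(x)$ then $\overline u(x,t)\le\overline b(x)=u(x,t)$ by Lemma~\ref{lem:ubarlequbar}. If $u(x,t)<\overline b(x)$, take a minimizing curve $\gamma_\varepsilon$ for $u^\varepsilon(x_\varepsilon,t)$ with data $(y_\varepsilon,\tau_\varepsilon)\in\partial Q^\varepsilon_t$ and $|x_\varepsilon-y_\varepsilon|\le M_0(t-\tau_\varepsilon)$ (Lemma~\ref{lem:velocity bound}). By Lemma~\ref{lem:ueptimelbd} there are $\varepsilon_0>0$ and $\tau_{x,t}<t$ with $\tau_\varepsilon<\tau_{x,t}$ for $\varepsilon<\varepsilon_0$, so $t-\tau_\varepsilon$ stays bounded below and the singular regime is avoided. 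Passing to a subsequence, $\tau_\varepsilon\to\tau_0\in[0,\tau_{x,t}]$ and $y_\varepsilon\to y_0$ with $|x-y_0|\le M_0(t-\tau_0)$. Then \eqref{eqn:uepocftm}, Propositions~\ref{prop:mstarm}--\ref{prop:mbarrate}, and $f_\varepsilon\ge f$ from \eqref{eq:ffeps} give $u^\varepsilon(x_\varepsilon,t)\ge\overline m^\ast(\tau_\varepsilon,t,y_\varepsilon,x_\varepsilon)-C\varepsilon+f_\varepsilon(y_\varepsilon,\tau_\varepsilon)$; using $\overline b\in\BUC(\R^n)$ and \ref{itm:A5} one checks $\liminf_{\varepsilon\to 0}f_\varepsilon(y_\varepsilon,\tau_\varepsilon)\ge f(y_0,\tau_0)$ (a case split on $\tau_\varepsilon>0$ versus $\tau_\varepsilon=0$ along the subsequence), and by continuity $\overline m^\ast(\tau_\varepsilon,t,y_\varepsilon,x_\varepsilon)\to\overline m^\ast(\tau_0,t,y_0,x)$. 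Letting $\varepsilon\to 0$ and using $u^\varepsilon(x_\varepsilon,t)\to u(x,t)$ yields $u(x,t)\ge\overline m^\ast(\tau_0,t,y_0,x)+f(y_0,\tau_0)\ge\overline u(x,t)$, since $(\tau_0,y_0)$ is admissible in \eqref{eqn:defubar}. Combining the two bounds proves the claim.

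I expect the main obstacle to be the singular regime $t-\tau\to 0$, where $\tm$ and $\overline m^\ast$ blow up and the $O(\varepsilon)$ rates of Propositions~\ref{prop:mstarm}--\ref{prop:mbarrate} degenerate. It is navigated by the dichotomy on whether the relevant value equals $\overline b$: when it does, the constraint $u\le\overline b$ in \eqref{eqn:PDE_limit} and Lemma~\ref{lem:ubarlequbar} close the argument directly; when it does not, Lemma~\ref{lem:tmbstarbd} (on the $\overline u$ side) and Lemma~\ref{lem:ueptimelbd} (on the $u^\varepsilon$ side) force the starting-time parameter to remain uniformly below $t$. The only other point requiring care is the matching of the oscillating Dirichlet data $f_\varepsilon(y,\tau)=b(y,y/\varepsilon)$ with the limit $f(y,\tau)=\overline b(y)=\min_zb(y,z)$, handled by the periodicity argument in (iii), together with the bookkeeping of endpoint perturbations in step one via Proposition~\ref{prop:sptm}.
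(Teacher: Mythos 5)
Your proposal follows the same overall strategy as the paper's proof: both inequalities are obtained by comparison with $u^\varepsilon$ followed by $\varepsilon\to0$, the singular regime $\tau\to t$ is defused by the dichotomy on whether the relevant value equals $\overline b$ together with Lemma~\ref{lem:ueptimelbd}, and the oscillating Dirichlet data are matched to $\overline b$ via the $\Z^n$-periodicity of $b(y,\cdot)$, exactly as in the paper. One cosmetic issue: your two headers are swapped --- the block announced as ``$\overline u\le u$'' concludes $u\le\overline u$ and vice versa --- though each block is internally consistent.

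The one substantive weak point is in the block that proves $\overline u\le u$: you pass to a subsequential limit $(\tau_\varepsilon,y_\varepsilon)\to(\tau_0,y_0)$ and invoke continuity of $\overline m^\ast$ to get $\overline m^\ast(\tau_\varepsilon,t,y_\varepsilon,x_\varepsilon)\to\overline m^\ast(\tau_0,t,y_0,x)$. The justification you offer (locally uniform limit of the ``continuous'' functions $\varepsilon\,\tm^\ast$) does not hold up: $\tm^\ast$ is an infimum over representatives constrained by $\tilde x-x\in Y$, $\tilde y-y\in Y$, and these constraint sets vary discontinuously with $x,y$; moreover the endpoint-perturbation estimate of Proposition~\ref{prop:sptm} only controls the change in $\tm$ by an additive \emph{constant}, not by a modulus, so continuity of $\overline m^\ast$ does not follow from the tools at hand (it is essentially equivalent to identifying $\overline m^\ast$ with $(t-\tau)\,\overline L\big(\tfrac{x-y}{t-\tau}\big)$, which is downstream of the lemma being proved). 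The paper avoids this entirely: it keeps $x$ fixed as the last argument of $\overline m^\ast$, takes the minimizer for $u^\varepsilon(\tilde x_\varepsilon,t)$ with $\tilde x_\varepsilon\in\partial\Omega_\varepsilon$ an admissible representative of $x$, bounds $\overline m^\ast(\tau_\varepsilon,t,y_\varepsilon,x)\le\varepsilon\,\tm^\ast(\cdot)+C\varepsilon$ by Proposition~\ref{prop:mbarrate}, and then simply uses that $(\tau_\varepsilon,y_\varepsilon)$ is admissible in the infimum defining $\overline u(x,t)$, so that $u^\varepsilon(\tilde x_\varepsilon,t)\ge\overline u(x,t)-C\varepsilon$ with no limit taken along the minimizers. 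Replacing your subsequence-plus-continuity step by this direct estimate closes the gap; the rest of your outline, including the competitor construction for the opposite inequality, is sound and matches the paper.
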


\begin{proof} If $t=0$, $u(x,0) = g(x) = \overline{u}(x,0)$ for $x\in \R^n$. \medskip 

\noindent 
{\bf Step 1.} Let $t > 0 $ and $x \in \mathbb{R}^n$. We show that
\begin{align*}
	\ol{u} (x, t) \leq u(x, t). 
\end{align*}

By Lemma \ref{lem:DPPHbar}, we know $u(x, t) \leq \ol{b}(x)$. If $u(x,t) = \overline{b}(x)$, then $\overline{u}(x,t) \leq \overline{b}(x) = u(x,t)$ by Lemma \ref{lem:ubarlequbar}. Suppose that $u(x,t) < \overline{b}(x)$. Let $\tilde{x}_\varepsilon \in \partial \overline{\Omega}_\varepsilon$ be such that $\frac{\tilde{x}_\varepsilon - x}{\varepsilon} \in Y$. Choose a minimizer of $u^\varepsilon(\tilde{x}_\varepsilon, t)$ in the sense of \eqref{eqn:uepocftm}, or equivalently \eqref{eqn:ocfue}; that is, there exists an optimal pair $\tau_\varepsilon \in [0, t)$ and $\gamma: \left[0, \frac{t}{\varepsilon}\right] \to \overline{\Omega}_\varepsilon$ such that
\begin{align*}
	\gamma_\varepsilon\left(\frac{t_\varepsilon}{\varepsilon}\right) = \frac{\tilde{x}_\varepsilon}{\varepsilon},
	 \qquad \gamma_\varepsilon\left(\frac{\tau_\varepsilon}{\varepsilon}\right) = \frac{y_\varepsilon}{\varepsilon}, 
	 \qquad \left(\frac{y_\varepsilon}{\varepsilon}, \tau_\varepsilon\right) \in \partial Q_t, \qquad |\tilde{x}_\varepsilon- y_\varepsilon|\leq M_0(t-\tau_\varepsilon),	 
\end{align*}
and
\begin{align}\label{eq:ubarAA}
	u^\varepsilon(\tilde{x}_\varepsilon, t) 
	&= \varepsilon \int_{\frac{\tau_\varepsilon}{\varepsilon}}^{\frac{t}{\varepsilon}} L(\gamma_\varepsilon(s), \dot{\gamma}_\varepsilon(s))\;ds + f_\varepsilon(y_\varepsilon, \tau_\varepsilon) 
	= 
	\varepsilon \widetilde{m}\left(\frac{\tau_\varepsilon}{\varepsilon}, \frac{t}{\varepsilon}, \frac{y_\varepsilon}{\varepsilon}, \frac{\tilde{x}_\varepsilon}{\varepsilon}\right)  + f_\varepsilon(y_\varepsilon, \tau_\varepsilon). 
\end{align}
By Lemma \ref{lem:ueptimelbd}, there exists $\varepsilon_0 > 0$ and $\tau_{x,t} \in (0,t)$ such that $0\leq \tau_\varepsilon < \tau_{x,t} < t $ for $\varepsilon \in (0,\varepsilon_0)$, i.e.,
\begin{equation*}
	0\leq \tau_\varepsilon < t-\varepsilon \qquad \text{for all}\; \varepsilon \in (0,\varepsilon_0). 
\end{equation*}
%
To connect $\widetilde{m}$ and $\widetilde{m}^*$, as $\tilde{x}_\varepsilon\in \partial\Omega_\varepsilon$ already, we take $\tilde{y}_\varepsilon\in \partial \Omega_\varepsilon$ such that $\frac{\tilde{y}_\varepsilon - y_\varepsilon}{\varepsilon} \in Y$. Then by the definition of $\widetilde{m}^*$ in \eqref{eq:defmstar} and Proposition \ref{prop:sptm}, we have
\begin{align}\label{eq:ubarA1}
	\widetilde{m}^*\left(\frac{\tau_\varepsilon}{\varepsilon}, \frac{t}{\varepsilon}, \frac{y_\varepsilon}{\varepsilon}, \frac{x}{\varepsilon}\right)
	\leq 
	\widetilde{m}\left(\frac{\tau_\varepsilon}{\varepsilon}, \frac{t}{\varepsilon}, \frac{\tilde{y}_\varepsilon}{\varepsilon}, \frac{\tilde{x}_\varepsilon}{\varepsilon}\right) 
	\leq 
	\widetilde{m}\left(\frac{\tau_\varepsilon}{\varepsilon}, \frac{t}{\varepsilon}, \frac{y_\varepsilon}{\varepsilon}, \frac{\tilde{x}_\varepsilon}{\varepsilon}\right) + C
\end{align}
for some $C=C(n, \partial\Omega, M_0, K_0)>0$. By Proposition \ref{prop:mbarrate}, we have
\begin{align}\label{eq:ubarA2}
	\overline{m}^*(\tau_\varepsilon, t, y_\varepsilon, x) - C\varepsilon \leq \varepsilon \widetilde{m}^*\left(\frac{\tau_\varepsilon}{\varepsilon}, \frac{t}{\varepsilon}, \frac{y_\varepsilon}{\varepsilon}, \frac{x}{\varepsilon}\right). 
\end{align}
From \eqref{eq:ubarA1}, \eqref{eq:ubarA2}, and \eqref{eq:ubarAA}, we obtain
\begin{align}\label{eq:ubarA3}
	u^\varepsilon(\tilde{x}_\varepsilon, t) 
	&= \varepsilon \widetilde{m}\left(\frac{\tau_\varepsilon}{\varepsilon}, \frac{t}{\varepsilon}, \frac{y_\varepsilon}{\varepsilon}, \frac{\tilde{x}_\varepsilon}{\varepsilon}\right) + f_\varepsilon(y_\varepsilon, \tau_\varepsilon) \nonumber \\
	&\geq \overline{m}^*(\tau_\varepsilon, t, y_\varepsilon, x) + f(y_\varepsilon,\tau_\varepsilon) - C\varepsilon
	\geq \overline{u}(x,t) - C\varepsilon. 
\end{align}
Here we use the fact that $f_\varepsilon(y_\varepsilon, \tau_\varepsilon) \geq f(y_\varepsilon, \tau_\varepsilon)$ for $(\tfrac{y_\varepsilon}{\varepsilon}, \tau_\varepsilon) \in \partial Q_t$ from \eqref{eq:ffeps}. 
\noindent 
Let $\delta>0$. By qualitative homogenization result in \cite{Horie1998Homogenization},
there exists $\ep_\del \in (0,\varepsilon_0)$ such that 
\begin{equation}\label{eqn:ueps-u}
    \Vert u^\varepsilon(\cdot, t) - u(\cdot,t)\Vert_{L^\infty(\R^n)} < \delta \qquad\text{for}\;\varepsilon\in(0,\varepsilon_\delta). 
\end{equation}
Use \eqref{eqn:ueps-u} and \eqref{eq:ubarA3}, we deduce that $u(\tilde{x}_\varepsilon, t) + \delta \geq \overline{u}(x,t) - C\varepsilon$. Recall that $|\tilde{x}_\varepsilon - x| \le \sqrt{n}\,\varepsilon$. Sending $\varepsilon \to 0$ first, we obtain $u(x, t) + \delta \ge \overline{u}(x, t)$; then, sending $\delta \to 0$, we obtain the conclusion $u \ge \overline{u}$.
\medskip 

\noindent 
{\bf Step 2.}
Next, we show that for $x\in \R^n$ and $t>0$,
\begin{align*}
	\ol{u} (x, t) \geq u(x, t). 
\end{align*}

Let $\delta>0$. By the definition of $\overline{u}$, there exists $\tau_\delta \in [0,t)$ and $y_\delta \in \R^n$ with $|x-y_\delta| \leq M_0(t-\tau_\delta)$ such that
\begin{equation}\label{eqn:oluappbyolmstar}
	\overline{u}(x,t) + \delta \geq \overline{m}^*(\tau_\delta, t, y_\delta, x) + f(y_\delta, \tau_\delta). 
\end{equation}
Let $\varepsilon \in (0,t-\tau_\delta]$. By Proposition \ref{prop:mbarrate}, we know
\begin{equation}\label{eqn:olmstarappbtmstar}
	\overline{m}^*(\tau_\delta, t, y_\delta, x) 
	\geq 
	\varepsilon \widetilde{m}^*\left(\frac{\tau_\delta}{\varepsilon}, \frac{t}{\varepsilon}, \frac{y_\delta}{\varepsilon}, \frac{x}{\varepsilon}\right) - C\varepsilon
\end{equation}
for $C = C(n, \partial\Omega, H, \mathrm{Lip}(g)) >0 $. There exist $\tilde{y}_\delta, \tilde{x}_\delta \in \partial\Omega_\varepsilon$ with $\frac{\tilde{y}_\delta - y_\delta}{\varepsilon} \in Y$ and $\frac{\tilde{x}_\delta-x}{\varepsilon}\in Y$ such that
\begin{equation} \label{eqn:tmstareqtm}
    \tm^\ast \left(\frac{\tau_\del}{\ep},\frac{t}{\ep}, \frac{y_\del}{\ep}, \frac{x}{\ep} \right)=\tm \left(\frac{\tau_\del}{\ep},\frac{t}{\ep}, \frac{\tilde{y}_\del}{\ep},  \frac{\tilde{x}_\delta}{\ep} \right).
\end{equation}
Since $\varepsilon < t-\tau_\delta$, we have $|\tilde{y}_\delta - y|\leq \sqrt{n}\varepsilon$ and $|\tilde{x}_\delta - x|\leq \sqrt{n}\varepsilon \leq 2\sqrt{n}(t-\tau_\delta)$. Therefore{\red ,}
\begin{equation}\label{eqn:txtydelbdbttau}
    \begin{aligned}
    \left|\tilde{x}_\delta-\tilde{y}_\del\right| &\leq \left|\tilde{x}_\delta-x\right|+\left|x-y_\delta\right|+\left|y_\delta-\tilde{y}_\del\right| 
    \leq \left(2\sqrt{n}+M_0\right)(t-\tau_\del).
    \end{aligned}
\end{equation}
By \eqref{eqn:oluappbyolmstar}, \eqref{eqn:olmstarappbtmstar}, and \eqref{eqn:tmstareqtm}, we have
\begin{equation}\label{eqn:ubartmtydel}
        \ol{u}\left(x, t\right) + \delta \geq  \ep \tm \left(\frac{\tau_\del}{\ep},\frac{t}{\ep}, \frac{\tilde{y}_\del}{\ep},  \frac{\tilde{x}_\delta}{\ep} \right) +f\left(y_\del,\tau_\delta\right)-C\ep.
\end{equation}
\begin{itemize}
\item If $\tau_\delta = 0$, then by \eqref{eqn:deff} we have $f(y_\delta, \tau_\delta) = f(y_\delta, 0) = g(y_\delta)$, and thus \eqref{eqn:ubartmtydel} becomes
\begin{align*}
	\ol{u}\left(x, t\right) + \delta  \geq \varepsilon \tilde{m}\left(0, \frac{t}{\varepsilon}, \frac{\tilde{y}_\delta}{\varepsilon}, \frac{\tilde{x}_\delta}{\varepsilon}\right) - C\varepsilon \geq u^\varepsilon(\tilde{x}_\delta,t) - C\varepsilon {\red ,}
\end{align*}
where the last inequality follows from \eqref{eqn:uepocftm}. 
\item If $\tau_\delta > 0$, then since $b(x,\cdot)$ is $\Z^n$-periodic for every $x$, we can choose $z_\delta \in \partial\Omega_\varepsilon$ with $\frac{z_\delta - y_\delta}{\varepsilon} \in Y$ such that 
\begin{equation}\label{eqn:zdeldef}
        \ol{b}(y_\del) = \min_{z \in \partial\Omega} b (y_\del, z)= b\left(y_\del, \frac{z_\del}{\varepsilon}\right).
\end{equation}
Since $\tau_\delta > 0$, we have $f_\varepsilon(z_\delta, \tau_\delta) = b\left(z_\delta, \tfrac{z_\delta}{\varepsilon}\right)$ and $f(y_\delta, \tau_\delta) =\overline{b}(y_\delta)$; thus,
\begin{align}\label{eq:zdeltaAA}
	f(y_\delta, \tau_\delta) 
	= \overline{b}(y_\delta) 
	&= b\left(y_\delta, \frac{z_\delta}{\varepsilon}\right) \nonumber \\
	&= b\left(y_\delta, \frac{z_\delta}{\varepsilon}\right) - b\left(z_\delta, \frac{z_\delta}{\varepsilon}\right) + b\left(z_\delta, \frac{z_\delta}{\varepsilon}\right) 
	\geq \omega_b(\sqrt{n}\varepsilon) + f_\varepsilon(z_\delta),
\end{align}
where $\omega_b(\cdot)$ denotes the modulus of continuity of $b$ from \ref{itm:A4}.
    Note that $|z_\delta-\tilde{y}_\delta| \leq |z_\delta-y_\delta| +|y_\delta-\tilde{y}_\delta| \leq 2\ep\sqrt{n}$.
    By \eqref{eqn:txtydelbdbttau} and Proposition \ref{prop:sptm},
     \begin{equation}\label{eqn:tmydeltmzdel}
         \tm \left(\frac{\tau_\del}{\ep},\frac{t}{\ep}, \frac{\tilde{y}_\del}{\ep},  \frac{\tilde{x}_\delta}{\ep} \right) \geq \tm \left(\frac{\tau_\del}{\ep},\frac{t}{\ep}, \frac{z_\del}{\ep},  \frac{\tilde{x}_\delta}{\ep} \right) - C 
     \end{equation}
     for some constant $C=C\left(n, \partial \Omega, M_0, K_0\right)>0$. Combining \eqref{eqn:ubartmtydel}, \eqref{eqn:tmydeltmzdel}, and \eqref{eq:zdeltaAA}{\red ,} we have
\begin{align*}
	\overline{u}(x,t) + \delta 
	&\geq 
		\varepsilon \tm \left(\frac{\tau_\del}{\ep},\frac{t}{\ep}, \frac{z_\del}{\ep},  \frac{\tilde{x}_\delta}{\ep} \right) + f_\varepsilon(y_\delta, \tau_\delta)  - C \varepsilon +f(y_\delta, \tau_\delta) -  f_\varepsilon(y_\delta, \tau_\delta)\\
	&\geq u^\varepsilon(\tilde{x}_\delta,t) - C\varepsilon - \omega_b(\sqrt{n\varepsilon}). 
\end{align*}
\end{itemize}
In each case of $\tau_\delta =0$ and $\tau_\delta > 0$, we have
\begin{align*}
	\overline{u}(x,t) +\delta \geq u^\varepsilon(\tilde{x}_\delta,t) - C_\varepsilon - \omega_b \left(\sqrt{n}\varepsilon\right), 
\end{align*}
for some constant $C=C(n, \partial \Omega, M_0, K_0)>0$ and for the modulus of continuity $\omega_b$ associated with $b$.
Define $\tilde{\ep}:=\min\left\{t-\tau_\delta, \varepsilon_\delta\right\}$ where $\varepsilon_\delta$ is as given in \eqref{eqn:ueps-u}. Then, for any $\ep \in (0, \tilde{\ep})$,
\begin{align*}
	\overline{u}(x,t) +\delta \geq u^\varepsilon(\tilde{x}_\delta,t) - C_\varepsilon - \omega_b(\sqrt{n}\varepsilon) \geq u(\tilde{x}_\delta, t) -\delta - C_\varepsilon - \omega_b(\sqrt{n}\varepsilon). 
\end{align*}
Note that $|\tilde{x}_\delta - x| \leq \sqrt{n}\varepsilon$. Sending $\ep \to 0$ first, we obtain $\overline{u}(x,t) +\delta \geq u(x,t) - \delta$; then, sending $\delta \to 0$, we obtain the conclusion $\overline{u}\geq u$.
\end{proof}

\section{Proof of Theorem \ref{thm:main1}} \label{sec:ProofThm1}

%

Now we are ready to prove Theorem \ref{thm:main1}.

\begin{proof}[Proof of Theorem \ref{thm:main1}]
Let $\varepsilon>0, x \in \overline{\Omega}_\varepsilon$, and $t>0$. The case $0<t<\varepsilon$ is easier to handled due to Lipschitz bound in time of $u^\varepsilon$ and $u$. Indeed, by Propositions \ref{prop:ueplip} and \ref{prop:ueplip}, there exists a constant $C = C(H, \Vert g\Vert_{\mathrm{Lip}(\R^n)})$ such that
\begin{equation*}
    \left|u^\varepsilon(x,t)-g(x)\right|\leq Ct 
    \qquad\text{and} \qquad 
    \left|u(x,t)-g(x)\right|\leq Ct
\end{equation*}
for all $x\in \R^n$. Hence, $\left|u^\varepsilon(x, t)-u(x, t)\right| \leq Ct \leq C \varepsilon$ for $x\in \R^n$. 
\medskip 
\noindent 
Let us consider the case $t \geq \varepsilon$. 

\noindent 
{\bf Step 1. The lower bound of $u^\varepsilon - u$ when $t\geq \varepsilon$.} We aim to show that 
\begin{equation*}
    u(x, t)-u^\ep(x, t) \leq C\varepsilon
\end{equation*}
for some constant $C=C\left(n, \partial \Omega, H, \|Dg\|_{L^\infty(\mathbb{R}^n)}, \|Db\|_{L^\infty \left(\mathbb{R}^n \times \partial \Om \right)}\right)>0$. For $u^\varepsilon \left(x, t\right)$, by Lemma \ref{lem:velocity bound}, there exist $\tau \in [0, t)$ and $y\in \ol\Om_\ep$, and an optimal path $\gamma \in \mathcal{C}\left(\frac{x}{\varepsilon}, \frac{t}{\varepsilon}; \overline{\Om}\right)$ such that
            \[
            \gamma\left(\frac{\tau}{\ep}\right)=\frac{y}{\varepsilon}, \qquad \left\|\dot{\gamma} \right\|_{L^\infty\left(\left[\frac{\tau}{\varepsilon}, \frac{t}{\varepsilon}\right]\right)}\leq M_0, \qquad |x-y| \leq M_0|t-\tau|,
            \]
            and
            \[
            u^\varepsilon (x, t) = \varepsilon \tm\left( \frac{\tau}{\varepsilon}, \frac{t}{\varepsilon}, \frac{y}{\varepsilon}, \frac{x}{\varepsilon}\right)+\fep\left(y, \tau \right)=\varepsilon \int^\frac{t}{\varepsilon}_{\frac{\tau}{\varepsilon}} L\left(\gamma(s),\dot{\gamma}(s)\right) \,ds+\fep \left(y,\tau\right).
            \]
There are two cases: $t-\tau < \varepsilon$ and $t-\tau \geq \varepsilon$. 
\begin{itemize}
	\item If $t-\tau< \varepsilon$, then by \eqref{eqn:K_0L}, 
            \[
            \ep \int^\frac{t}{\varepsilon}_{\frac{\tau}{\varepsilon}} L\left(\gamma(s),\dot{\gamma}(s)\right) \,ds \geq -(t-\tau )K_0\geq -\ep K_0 
	            \quad\Longrightarrow\quad 
            u^\varepsilon(x,t) \geq f_\varepsilon(y,\tau) -\varepsilon K_0,
            \]
            and hence 
	\begin{align}
		u(x, t) - u^\ep (x, t) 
        &\leq 
        (t-\tau) \Lbar\left(\frac{x-y}{t-\tau }\right)+f(y, \tau) - f_\varepsilon(y,\tau) + \varepsilon K_0 \nonumber \\ 
        &\leq  (t-\tau)\left(\frac{1}{2}\frac{\left|x-y\right|^2}{(t-\tau)^2}+K_0\right)+\ep K_0   
            \leq  \ep\left(M_0^2+2K_0\right),
	\end{align}
where the second inequality uses \eqref{eq:ffeps}, and the last follows from \eqref{eq:Hbar-Lbar}.
                \item If $t-\tau \geq \varepsilon$, by Lemma \ref{lem:ubarequ}, 
                \begin{align*}
                u(x, t) =\overline{u}\left(x,t\right)  
                &\leq \overline{m}^\ast \left(\tau, t, y, x \right) +f(y, \tau)\\
                &\leq  \varepsilon \tm^\ast \left(\frac{\tau}{\varepsilon},\frac{t}{\varepsilon}, \frac{y}{\varepsilon}, \frac{x}{\varepsilon} \right) +f(y, \tau)  + C \varepsilon\\
                &\leq \varepsilon \tm\left(\frac{\tau}{\varepsilon},\frac{t}{\varepsilon}, \frac{y}{\varepsilon}, \frac{x}{\varepsilon} \right) +\fep(y, \tau)  + \hat{C} \varepsilon
                =  u^\varepsilon \left(x, t\right)+ \hat{C} \varepsilon,
                \end{align*}	
                for some constant $\hat{C}=\hat{C}\left(n, \partial \Omega, M_0, K_0\right)>0$. Here the second inequality follows from Proposition \ref{prop:mbarrate}, and the third from Proposition \ref{prop:mstarm} together with $f(y,\tau) \leq f_\varepsilon(y,\tau)$ from \eqref{eq:ffeps}.
\end{itemize}

\noindent 
{\bf Step 2. The upper bound of $u^\varepsilon - u$ when $t\geq \varepsilon$.} We aim to show that 
\begin{equation*}
    u^\varepsilon(x, t)-u(x, t) \leq C\varepsilon
\end{equation*}
for some constant $C=C\left(n, \partial \Omega, H, \|Dg\|_{L^\infty(\mathbb{R}^n)}, \|Db\|_{L^\infty \left(\mathbb{R}^n \times \partial \Om \right)}\right)>0$. Fix $k\in \N$. By Lemma \ref{lem:ubarequ}, there exists $\tau_k\in[0, t)$ and $y_k \in \R^n$ such that
            \begin{equation}\label{eqn:umbarstarapp}
            \overline{m}^\ast \left(\tau_k, t, y_k, x \right) + f\left(y_k, \tau_k\right) \leq u(x, t) +\frac{1}{k}, \qquad \qquad   \left|x-y_k\right| \leq M_0 (t-\tau_k),
            \end{equation}
            where $M_0=M_0\left(n,\partial \Omega, H, \|Dg\|_{L^\infty(\mathbb{R}^n)}\right)>0$ is the constant from Lemma \ref{lem:velocity bound}.  Since $b(x, \cdot)$ is $\Z^n$-periodic for every $x$, we can choose $z_k \in \partial \Omega_\varepsilon$ with $\frac{z_k-x}{\ep}\in Y$ such that  
\begin{equation*}
	\ol{b}(y_k) = \min_{z \in \partial\Omega} b (y_k, z)= b\left(y_k, \frac{z_k}{\varepsilon}\right). 
\end{equation*}            
Note that 
	\begin{align}
	    & |z_k-x| \leq \ep\sqrt{n}, \label{eqn:zkxdifa}\\ 
		& |z_k-y_k|\leq |z_k-x| +|x-y_k| \leq \ep\sqrt{n} +M_0(t-\tau_k) \leq (M_0+\sqrt{n})\ep. \label{eqn:zkykdif}	
	\end{align}
            There are two cases $t-\tau_k <\varepsilon$ and $t-\tau_k \geq \varepsilon$.            
            \begin{enumerate}[label=$\mathrm{(\alph*)}$, leftmargin=0.7cm]
                \item[(i)] Suppose $t-\tau_k<\varepsilon$. By Lemma \ref{lem:tmbstarbd}, there exists $C=C\left(n, \partial \Omega, M_0, K_0\right)>0$ such that
                \begin{equation}\label{eqn:ttaukllbd}
                \overline{m}^\ast \left(\tau_k, t, y_k, x \right) \geq -\ep C. 
                \end{equation}
           Consider the constant path $\xi(s)\equiv z_k$ for $s \in [0, t]$, which is an admissible path for $u^\ep(z_k, t)$, and hence
            \begin{equation}\label{eqn:uepzkubd}
            u^\ep\left(z_k, t\right) 
            	\leq (t-\tau_k) L\left(\frac{z_k}{\ep}, 0\right) +f_\varepsilon (z_k, \tau_k) 
            	\leq C_1\ep  + f_\varepsilon(z_k, \tau_k) 
            \end{equation}
           where $C_1=C_1(H)>0$ is from \eqref{eq:H-lower-bd}. Since $t\geq \ep$ and $t-\tau_k <\ep$, $\tau_k>0$ and hence by the definition of $f, \fep$, and $z_k$, we have
           \begin{equation}\label{eqn:fepfdiff}
           \begin{aligned}
           \fep(z_k,\tau_k)-f(y_k, \tau_k) 
           		&= 
           	b\left(z_k,\frac{z_k}{\ep}\right)-\overline{b}(y_k) \\
           		&= 
           	b\left(z_k,\frac{z_k}{\ep}\right) - b\left(y_k,\frac{z_k}{\ep}\right)
 		        \leq \mathrm{Lip}(b)(M_0+\sqrt{n}) \ep,
           \end{aligned}       
           \end{equation}
           where the last inequality comes from \eqref{eqn:zkykdif}.
               Now combining \eqref{eqn:umbarstarapp}, \eqref{eqn:zkykdif}, \eqref{eqn:uepzkubd}, \eqref{eqn:fepfdiff} and Proposition \ref{prop:ueplip}, we obtain
      \begin{align*}
      	u^\varepsilon(x,t) - u(x,t) 
      	&\leq 
      	C_0|x-z_k| + u^\varepsilon(z_k,t) + \left(-\overline{m}^*\left(\tau_k t,y_k, x\right) - f(y_k,\tau_k) + \frac{1}{k}\right) \\
      	&\leq C_0\sqrt{n}\varepsilon + C_1\varepsilon + f_\varepsilon(z_k, t) - f(y_k, \tau_k) + C\varepsilon + \frac{1}{k} \leq C\varepsilon + \frac{1}{k}
      \end{align*}
            for some constant $C=C(n, \partial\Omega, H, \left\|Dg\right\|_{L^\infty\left(\mathbb{R}^n\right)}, \left\|Db\right\|_{L^\infty \left(\mathbb{R}^n\times \partial \Om\right)}])>0$. 

            \item [(ii)]Suppose $t-\tau_k\geq \varepsilon$.             
            From \eqref{eqn:umbarstarapp} and Proposition \ref{prop:mbarrate},
            \begin{equation}\label{eqn:tmstarappu}
            \begin{aligned}
            u(x, t) +\frac{1}{k} 
            		&\geq   \overline{m}^\ast \left(\tau_k, t, y_k, x \right) + f\left(y_k, \tau_k\right)\\
            		&\geq  \varepsilon \tm^\ast \left(\frac{\tau_k}{\varepsilon}, \frac{t}{\varepsilon}, \frac{y_k}{ \varepsilon}, \frac{x}{\varepsilon}\right)-C\ep + f\left(y_k, \tau_k\right)             
            \end{aligned}
            \end{equation}
            for some constant $C=C\left(n, \partial \Omega, M_0, K_0\right)>0$.
            There exists $\tilde{y}_k, \tilde{x} \in \partial\Omega$ with $\frac{\tilde{y_k}-y_k}{\varepsilon} \in Y$ and $\frac{\tilde{x} - x}{\varepsilon} \in Y$ such that 
            \begin{equation} \label{eqn:mstarmtild}
            \tm^\ast \left(\frac{\tau_k}{\varepsilon}, \frac{t}{\varepsilon}, \frac{y_k}{ \varepsilon}, \frac{x}{\varepsilon}\right) 
            = 
            \tm \left(\frac{\tau_k}{\varepsilon}, \frac{t}{\varepsilon}, \frac{\tilde{y}_k}{ \varepsilon}, \frac{\tilde{x}}{\varepsilon}\right)
            \end{equation}
            Note that since $t-\tau_k \geq \ep$, $|\tilde{x}-x| \leq \sqrt{n}\varepsilon$, $|\tilde{y}_k - y_k|\leq \sqrt{n}\varepsilon$, we have
            \begin{equation*}
            \left|\tilde{x}-\tilde{y}_k\right| \leq  |\tilde{x}-x|+|x-y_k|+|y_k-\tilde{y}_k|
            \leq  (2\sqrt{n}+M_0)(t-\tau_k).    
            \end{equation*}
            Therefore, by Proposition \ref{prop:sptm}, we have
            \begin{align}\label{eqn:txtykdif}
            	\tilde{m}\left(\frac{\tau_k}{\varepsilon}, \frac{t}{\varepsilon}, \frac{\tilde{y}_k}{\varepsilon}, \frac{\tilde{x}}{\varepsilon}\right) 
            	\geq 
            	\tilde{m}\left(\frac{\tau_k}{\varepsilon}, \frac{t}{\varepsilon}, \frac{z_k}{\varepsilon}, \frac{\tilde{x}}{\varepsilon}\right) - C
            \end{align}
            for some constant $C=C\left(n, \partial \Omega, M_0, K_0\right)>0$.
            By the definition of $\tm^\ast \left(\frac{\tau_k}{\varepsilon}, \frac{t}{\varepsilon}, \frac{z_k}{ \varepsilon}, \frac{x}{\varepsilon}\right)$, we have
            \begin{equation}\label{eqn:ytildx}
            \tm \left(\frac{\tau_k}{\varepsilon}, \frac{t}{\varepsilon}, \frac{z_k}{ \varepsilon}, \frac{\tilde{x}}{\varepsilon}\right) \geq \tm^\ast \left(\frac{\tau_k}{\varepsilon}, \frac{t}{\varepsilon}, \frac{z_k}{ \varepsilon}, \frac{x}{\varepsilon}\right)
            \end{equation}
            Hence, combining \eqref{eqn:tmstarappu}, \eqref{eqn:mstarmtild}, \eqref{eqn:txtykdif}, and \eqref{eqn:ytildx}, we obtain
    \begin{align}
    		u(x,t) + \frac{1}{k} 
    		     & \geq 
    		\varepsilon \widetilde{m}\left(\frac{\tau_k}{\varepsilon}, \frac{t}{\varepsilon}, \frac{\tilde{y}_k}{\varepsilon}, \frac{\tilde{x}}{\varepsilon}\right) - C\varepsilon + f(y_k, \tau_k)  \nonumber \\ 
    		&\geq 
    		\varepsilon \widetilde{m}\left(\frac{\tau_k}{\varepsilon}, \frac{t}{\varepsilon}, \frac{z_k}{\varepsilon}, \frac{\tilde{x}}{\varepsilon}\right) - \hat{C}\varepsilon + f(y_k, \tau_k)  \nonumber\\ 
    		&\geq \varepsilon \widetilde{m}^*\left(\frac{\tau_k}{\varepsilon}, \frac{t}{\varepsilon}, \frac{z_k}{\varepsilon}, \frac{x}{\varepsilon}\right) - \hat{C}\varepsilon + f(y_k, \tau_k)  \nonumber\\ 
    		&\geq \varepsilon \widetilde{m}\left(\frac{\tau_k}{\varepsilon}, \frac{t}{\varepsilon}, \frac{z_k}{\varepsilon}, \frac{x}{\varepsilon}\right) + f_\varepsilon(z_k,\tau_k) - \tilde{C}\varepsilon + f(y_k, \tau_k) -f_\varepsilon(z_k,\tau_k), \nonumber\\ 
    		&\geq u^\varepsilon(x,t)- \tilde{C}\varepsilon + f(y_k, \tau_k) -f_\varepsilon(z_k,\tau_k), 
    		\label{eqn:utmstarapp}
    \end{align}
    where the fourth inequality follows from Proposition \ref{prop:mstarm}. 
            We claim that
            \begin{equation}\label{eqn：fykfepzklbd}
            f\left(y_k, \tau_k\right)- \fep \left(z_k, \tau_k\right) \geq -C\ep
            \end{equation}
            for some constant $C=C\left(n,\|Dg\|_{L^\infty(\mathbb{R}^n)}, \|Db\|_{L^\infty \left(\mathbb{R}^n \times \partial \Om \right)} \right)>0$. Indeed, we have:
            \begin{itemize}
            \item If $\tau_k = 0$, then by \eqref{eqn:zkykdif}, 
            \begin{equation*}
            		f\left(y_k, \tau_k\right)- f_\ep\left(z_k, \tau_k\right)= g(y_k)-g(z_k) 
            			\geq - (M_0+\sqrt{n})\cdot \mathrm{Lip}(g)\cdot \varepsilon. 
            \end{equation*}
            \item If $\tau_k>0$, then by \eqref{eqn:zkykdif} and the definition of $z_k$,
			\begin{align*}
            		f\left(y_k, \tau_k\right)- f_\ep\left(z_k, \tau_k\right) 
            		&= \overline{b}(y_k)-b\left(z_k,\frac{z_k}{\ep}\right) \\
                &=b\left(y_k,\frac{z_k}{\ep}\right)-b\left(z_k,\frac{z_k}{\ep}\right)
            \geq - (M_0+\sqrt{n})\cdot \mathrm{Lip}(b)\cdot \varepsilon.
            \end{align*}
            \end{itemize}
            \noindent 
            Combining \eqref{eqn:utmstarapp} and \eqref{eqn：fykfepzklbd}, we have
            \begin{align*}
	            	u(x,t) + \frac{1}{k} \geq u^\varepsilon(x,t) - C\varepsilon
            \end{align*}
            for some constant $C=C(n,\partial \Omega, H, \|Dg\|_{L^\infty(\mathbb{R}^n)}, \left\|Db\right\|_{L^\infty \left(\mathbb{R}^n\times \partial \Om\right)})>0$. 
            \end{enumerate}
        In each case of $\mathrm{(i)}$ and $\mathrm{(ii)}$, we have
        \[
        u^\ep(x, t)-u(x, t) \leq C\ep+\frac{1}{k}
        \]
        for some constant $C=C\left(n, \partial \Omega, H, \|Dg\|_{L^\infty(\mathbb{R}^n)}, \|Db\|_{L^\infty \left(\mathbb{R}^n \times \partial \Om \right)}\right)>0$. Since $k\in \N$ is arbitrary, we have $u^\ep(x, t)-u(x, t) \leq C\ep$ and the proof is complete.
\end{proof}

\section{Discussions: Optimality, Diluted settings, and Domains with defects} \label{sec:discussion}
\subsection{Optimality of the rate of convergence}
To demonstrate the optimality of the convergence rate in Theorem \ref{thm:main1}, we present an example adapted from \cite[Proposition 4.3]{MitakeTranYu2019} with some modifications.

\begin{prop}
Let $n = 2$ and consider $\displaystyle H(y,p) = \frac{1}{2}|p|^2-V(y)$, where $V \in C(\mathbb{T}^2)$ satisfies 
\begin{equation*}
    0\leq V \leq 1, \qquad V = 1 \;\text{on}\; B_{1/8}(0), \qquad\;\mathrm{supp}(V)\subset B_{1/4}(0)    . 
\end{equation*}
Let $g \equiv 0$ and $b \equiv 1$. Set $x_0 = (\tfrac{1}{2}, \tfrac{1}{2})$, $ D = B_{1/4}(x_0) \subset [0,1]^2$ and define the domain (see Figure \ref{fig:example})
\begin{equation*}
    \Omega = \bigcup_{z \in \Z^2} \left(z + \big([0,1]^2 \setminus \overline{D}\big)\right).
\end{equation*}
For $\varepsilon > 0$, let $u^{\varepsilon}$ be the solution to \eqref{eqn:PDE_epsilon}, and let $u$ be the solution to \eqref{eqn:PDE_limit}. 
Then, for $\varepsilon \in (0,1)$,
\begin{equation}
u^{\varepsilon}(0,1) - u(0,1) \geq \frac{\varepsilon}{8}.
\end{equation}
\end{prop}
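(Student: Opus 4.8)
The plan is to establish the two one-sided bounds $u(0,1)\le 0$ and $u^{\varepsilon}(0,1)\ge \tfrac{\sqrt 2}{8}\,\varepsilon$; since $\tfrac{\sqrt2}{8}>\tfrac18$, subtracting them gives $u^{\varepsilon}(0,1)-u(0,1)\ge \tfrac{\varepsilon}{8}$.

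For the limit solution, note first that $g\equiv 0$, $\overline b\equiv 1$, and $L(y,v)=\tfrac12|v|^2+V(y)\ge 0$. The auxiliary fact I need is $\overline H_\Omega(0)=0$. I would prove it by exhibiting the $\Z^n$-periodic viscosity solution $v_0\in W^{1,\infty}(\R^n)$ of the eikonal equation $\tfrac12|Dv_0|^2=V$ with $v_0=0$ on $\{V=0\}$, and checking that it solves the cell problem \eqref{eqn:cell} with constant $0$: it satisfies the equation in $\Omega$, and since $\supp V$ is at positive distance from every hole $\overline D+z$ (a distance computation in $[0,1]^2$ gives $\tfrac{\sqrt2}{2}-\tfrac12>0$), $v_0$ vanishes in a neighbourhood of $\partial\Omega$, so the viscosity supersolution inequality holds on all of $\overline\Omega$; uniqueness of the ergodic constant then forces $\overline H_\Omega(0)=0$. (Equivalently, $-\overline H_\Omega(0)$ is the least time-averaged action over closed loops in $\overline\Omega$, which is $0$, witnessed by the constant loop at $(\tfrac12,0)\in\overline\Omega$, a zero of $V$.) By Legendre duality $\inf_{v}\overline L(v)=-\overline H_\Omega(0)=0$, the infimum being attained since $\overline L$ is convex and coercive; taking $\tau=0$ in the Hopf--Lax formula \eqref{eq:HLubar} of Proposition~\ref{prop:ValueU} gives
\[
u(0,1)\ \le\ \inf_{y\in\R^n}\bigl\{\overline L(-y)+g(y)\bigr\}\ =\ \inf_{v\in\R^n}\overline L(v)\ =\ 0 .
\]

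The core step is the lower bound for $u^{\varepsilon}(0,1)$, which I would extract directly from the optimal control formula \eqref{eqn:ocfue}. Since $g\equiv0$ and $b\equiv 1$ we have $f_\varepsilon\ge 0$, and $L\ge V\ge 0$, so for any admissible $\tau\in[0,1)$ and $\xi\in\mathcal{C}(0,1;\overline\Omega_\varepsilon)$ with $(\xi(\tau),\tau)\in\partial Q_1^\varepsilon$ (hence $\xi(1)=0$) the cost is at least $\int_\tau^1\bigl(\tfrac12|\dot\xi(s)|^2+V(\xi(s)/\varepsilon)\bigr)\,ds$. Because $V\equiv 1$ on $B_{1/8}(0)$, one has $V(\xi(s)/\varepsilon)=1$ whenever $|\xi(s)|\le\varepsilon/8$; also $B_{\varepsilon/8}(0)\subset\Omega_\varepsilon$ and $\dist(0,\partial\Omega_\varepsilon)=\varepsilon\bigl(\tfrac1{\sqrt2}-\tfrac14\bigr)>\tfrac\varepsilon8$, so no point of $\partial\Omega_\varepsilon$ lies in $\overline{B_{\varepsilon/8}(0)}$. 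If $|\xi(s)|\le\varepsilon/8$ for all $s\in[\tau,1]$, then $\xi(\tau)\in\overline{B_{\varepsilon/8}(0)}$ forces $\tau=0$, so the running cost is at least $\int_0^1 V(\xi/\varepsilon)\,ds=1>\tfrac\varepsilon8$ (using $\varepsilon<8$). Otherwise, since $\xi$ is continuous and $|\xi(1)|=0<\varepsilon/8$, set $s_0:=\sup\{s\in[\tau,1]:|\xi(s)|=\varepsilon/8\}<1$, so $|\xi|\le\varepsilon/8$ on $[s_0,1]$ and $\xi$ travels Euclidean length $\ell\ge|\xi(s_0)-\xi(1)|=\varepsilon/8$ there; with $h:=1-s_0$, Cauchy--Schwarz and AM--GM give
\[
\int_{s_0}^1 L\Bigl(\tfrac{\xi(s)}{\varepsilon},\dot\xi(s)\Bigr)\,ds\ \ge\ \int_{s_0}^1\Bigl(\tfrac12|\dot\xi(s)|^2+1\Bigr)\,ds\ \ge\ \frac{\ell^2}{2h}+h\ \ge\ \ell\sqrt2\ \ge\ \frac{\sqrt2}{8}\,\varepsilon .
\]
In both cases the total cost (adding $f_\varepsilon\ge0$) is at least $\tfrac{\sqrt2}{8}\varepsilon$, so $u^{\varepsilon}(0,1)\ge\tfrac{\sqrt2}{8}\varepsilon$, and combining with $u(0,1)\le0$ finishes the proof.

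The main obstacle is the identity $\overline H_\Omega(0)=0$ for the state-constraint cell problem on the perforated domain; it is soft here precisely because $V$ vanishes near every hole, so the eikonal distance function is an exact cell solution, but the viscosity super/subsolution conditions (including along $\partial\Omega$) must be checked carefully. The remaining delicate point is making the last-exit-time argument rigorous (continuity of $\xi$, $s_0<1$, and that $\partial\Omega_\varepsilon$ misses $\overline{B_{\varepsilon/8}(0)}$); everything else is bookkeeping with the non-negative Lagrangian and the localization of $V$ around the origin.
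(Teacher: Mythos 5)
Your proof is correct. The core estimate for $u^\varepsilon(0,1)$---bounding the running cost on the final excursion inside $\overline{B_{\varepsilon/8}(0)}$ by Cauchy--Schwarz and AM--GM---is the same as the paper's and yields the same constant $\tfrac{\sqrt2}{8}\varepsilon$. Two modest differences in organization: the paper separates off the case $\tau>0$ and disposes of it immediately via $f_\varepsilon\equiv 1\geq\varepsilon/8$, whereas you absorb it into the geometric argument using only $f_\varepsilon\geq 0$ together with the fact that $\partial\Omega_\varepsilon$ misses $\overline{B_{\varepsilon/8}(0)}$ (a slightly cleaner reduction); and, more substantively, you supply a proof that $u(0,1)\leq 0$ (via the cell solution of $\tfrac12|Dv_0|^2=V$ with $v_0\equiv 0$ near $\partial\Omega$, which forces $\overline{H}_\Omega(0)=0$, then $\inf_v\overline{L}(v)=-\overline{H}_\Omega(0)=0$ and the Hopf--Lax bound with $\tau=0$), where the paper simply asserts $u\equiv 0$. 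The auxiliary geometric facts you invoke ($\mathrm{supp}\,V$ at positive distance from the holes, and $\mathrm{dist}(0,\partial\Omega_\varepsilon)=\varepsilon\bigl(\tfrac1{\sqrt2}-\tfrac14\bigr)>\tfrac{\varepsilon}{8}$) are correct, so both additions are sound and genuinely fill in steps the paper leaves implicit.
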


\begin{proof}
In this example, we have $u \equiv 0$. Thus, it suffices to prove that
\[
u^{\varepsilon}(0,1) \geq \frac{\varepsilon}{8}.
\]
From the optimal control formula for $u^\varepsilon(0,1)$ in \eqref{eqn:ocfue} with $(x,t) = (0,1)$, consider any admissible path $\gamma \in \mathcal{C}\!\left(0,\tfrac{1}{\varepsilon}; \overline{\Omega}\right)$
such that $(\gamma(\tau),\tau) \in  (\partial\Omega \times (0,1))\cup (\overline{\Omega}\times \{0\})$, so that 
\begin{align*}
    u^\varepsilon(0,1) = \varepsilon \int_{\frac{\tau}{\varepsilon}}^{\frac{1}{\varepsilon}} \left(\frac{1}{2}|\dot{\gamma}(s)|^2+V(\gamma(s))\right)\, ds +f_\varepsilon \left(\varepsilon \gamma\left(\frac{\tau}{\varepsilon}\right), \tau\right). 
\end{align*}

\noindent
{\bf Case 1.} If $\tau \in (0,1)$ then $\gamma(\frac{\tau}{\varepsilon}) \in \partial \Omega$. 
Then $f_\varepsilon \left(\varepsilon \gamma\left(\frac{\tau}{\varepsilon}\right), \tau\right)=b\left(\varepsilon \gamma\left(\frac{\tau}{\varepsilon}\right), \gamma\left(\frac{\tau}{\varepsilon}\right)\right)=1$ and

\[
\varepsilon \int_{\frac{\tau}{\varepsilon}}^{\frac{1}{\varepsilon}} 
    \left(\frac{1}{2}|\dot{\gamma}(s)|^2+V(\gamma(s))\right)\, ds +f_\varepsilon \left(\varepsilon \gamma\left(\frac{\tau}{\varepsilon}\right), \tau\right) \geq 1 \geq \frac{\varepsilon}{8}.
\]

\noindent
{\bf Case 2.} If $\tau = 0$, there are two possibilities: 
\begin{enumerate}[label=$\mathrm{(\alph*)}$, leftmargin=0.7cm]
    \item If $\gamma \left(\left[0,\frac{1}{\varepsilon}\right]\right)\subset\overline{B}_{1/8}$, then
    \begin{equation*}
        \varepsilon \int_{0}^{\frac{1}{\varepsilon}} 
    \left( \frac{1}{2}|\dot{\gamma}(s)|^2+V(\gamma(s))\right)\, ds + f_\varepsilon \left(\varepsilon \gamma\left(0\right), 0\right) 
        \geq 
    \varepsilon \int_{0}^{\frac{1}{\varepsilon}}  V(\gamma(s))\, ds = 1\geq \frac{\varepsilon}{8}.
    \end{equation*}
    
    \item If $\gamma([0, \frac{1}{\varepsilon}])\cap \T^2\backslash \overline{B}_{1/8} \neq \emptyset$. 
    We may assume that there exists $s_0\in (0, 1)$ be such that $\gamma\left(\frac{s_0}{\varepsilon}\right)=\frac{1}{8}$ and that $\gamma\left(\left(\frac{s_0}{\varepsilon},\frac{1}{\varepsilon}\right]\right)\subset \overline{B}_{1/8}$ without loss of generality. Then,
        \begin{align*}
            & \varepsilon\int_{0}^{\frac{1}{\varepsilon}}\left(\frac{1}{2}|\dot{\gamma}(s)|^2 +V(\gamma(s))\right)\, ds +f_\varepsilon \left(\varepsilon \gamma\left(0\right), 0\right) 
                \geq
                    \varepsilon\left(\frac{1}{2}\int_{\frac{s_0}{\varepsilon}}^{\frac{1}{\varepsilon}}|\dot{\gamma}(s)|^2 \, ds
                +
                    \int_{\frac{s_0}{\varepsilon}}^{\frac{1}{\varepsilon}}V(\gamma(s)) \, ds\right)\\
            & \qquad \qquad 
                \geq\varepsilon\left(\frac{\varepsilon}{2\left(1-s_0\right)}\left|\int_{\frac{s_0}{\varepsilon}}^{\frac{1}{\varepsilon}}\dot{\gamma}(s)\,ds\right|^2 + \frac{1-s_0}{\varepsilon}\right) =\varepsilon\left(\frac{\varepsilon}{128\left(1-s_0\right)}+\frac{1-s_0}{\varepsilon}\right)\geq\frac{\sqrt{2}}{8}\varepsilon.
\end{align*}
\end{enumerate}
Therefore, $\displaystyle u^{\varepsilon}(0,1) \geq \frac{\varepsilon}{8}$.
\end{proof}

\begin{figure}[htbp]
    \centering

\begin{minipage}[c]{0.3\textwidth}
\centering
\begin{tikzpicture}[scale=4]

    \begin{scope}
    \clip (-0.5,-0.5) rectangle (0.5,0.5);

    \fill[gray!40, even odd rule] 
    (-0.5,-0.5) rectangle (0.5,0.5) 
    (-0.5,0.5) circle (0.25)         
    (0.5,0.5) circle (0.25)          
    (0.5,-0.5) circle (0.25)         
    (-0.5,-0.5) circle (0.25);       

    \fill[white] (-1/2,1/2) circle (1/4);
    \draw[thick]  (-1/2,1/2) circle (1/4) node[above right=1pt] {};
    
    \fill[white] (1/2, 1/2) circle (1/4);
    \draw[thick]  (1/2, 1/2) circle (1/4) node[above right=1pt] {};
    
    \fill[white] (1/2,-1/2) circle (1/4);
    \draw[thick]  (1/2,-1/2) circle (1/4) node[above right=1pt] {};
    
    \fill[white] (-1/2,-1/2) circle (1/4);
    \draw[thick]  (-1/2,-1/2) circle (1/4) node[above right=1pt] {};

    \fill[gray!70] (0,0) circle (1/8);

    \draw[blue!70!black, thick, ->, samples=200, smooth, domain=0:0.9, variable=\t]
        plot ({9/10*0.12*cos(200*pi*\t)*exp(1.5*\t)*\t}, {0.12*sin(15*pi*\t)*exp(1.5*\t)});

    \draw[->] (-1.1,0) -- (1.1,0) node[right] {$x$};
    \draw[->] (0,-1.1) -- (0,1.1) node[above] {$y$};

    \draw[dashed, thick, gray!70] ( 0.5,-1) -- ( 0.5,1) node[above right] {};
    \draw[dashed, thick, gray!70] (-0.5,-1) -- (-0.5,1) node[above left] {};
    \draw[dashed, thick, gray!70] (-1, 0.5) -- (1, 0.5) node[above right] {};
    \draw[dashed, thick, gray!70] (-1,-0.5) -- (1,-0.5) node[above left] {};

    \draw[dashed, thick, gray!70]  (1/8,-1) --  (1/8,1) node[above right] {}; 
    \draw[dashed, thick, gray!70] (-1/8,-1) -- (-1/8,1) node[above right] {}; 
    \draw[dashed, thick, gray!70]  (-1,1/8) --  (1,1/8) node[above right] {}; 
    \draw[dashed, thick, gray!70] (-1,-1/8) -- (1,-1/8) node[above right] {}; 

    \draw[red!70]  (-1/2,-1/2) --  (-1/2,1/2) node[above right] {}; 
    \draw[red!70]  ( 1/2,-1/2) --  ( 1/2,1/2) node[above right] {}; 
    \draw[red!70]  (-1/2, 1/2) --  ( 1/2, 1/2) node[above right] {}; 
    \draw[red!70]  (-1/2,-1/2) --  ( 1/2,-1/2) node[above right] {}; 

    \end{scope} 
    
    \fill (0,0) circle (0.008) node[below left] {$0$};

    \fill (-1/2,-1/2) circle (0.008) node[below left] {$-\frac{1}{2}$};
    \fill ( 1/2,-1/2) circle (0.008) node[below right] {$\frac{1}{2}$};

    \fill (-1/4,-1/2) circle (0.008) node[below left] {$-\frac{1}{4}$};
    \fill ( 1/4,-1/2) circle (0.008) node[below right] {$\frac{1}{4}$};

    \fill (-1/8,-1/2) circle (0.008) node[below] {$-\frac{1}{8}$};
    \fill ( 1/8,-1/2) circle (0.008) node[below] {$\frac{1}{8}$};

    \fill (-1/2, -1/8) circle (0.008) node[left] {$-\frac{1}{8}$};
    \fill (-1/2, 1/8) circle (0.008) node[left] {$\frac{1}{8}$};
    \fill (-1/2, -1/4) circle (0.008) node[left] {$-\frac{1}{4}$};
    \fill (-1/2, 1/4) circle (0.008) node[left] {$\frac{1}{4}$};
    \fill (-1/2, 1/2) circle (0.008) node[left] {$\frac{1}{2}$};
    
\end{tikzpicture}
\caption*{Case 1}
\end{minipage}
\hfill 
\begin{minipage}[c]{0.3\textwidth}
\centering
     \begin{tikzpicture}[scale=4]

    \begin{scope}
    \clip (-0.5,-0.5) rectangle (0.5,0.5);

    \fill[gray!40, even odd rule] 
    (-0.5,-0.5) rectangle (0.5,0.5) 
    (-0.5,0.5) circle (0.25)         
    (0.5,0.5) circle (0.25)          
    (0.5,-0.5) circle (0.25)         
    (-0.5,-0.5) circle (0.25);       

    \fill[white] (-1/2,1/2) circle (1/4);
    \draw[thick]  (-1/2,1/2) circle (1/4) node[above right=1pt] {};
    
    \fill[white] (1/2, 1/2) circle (1/4);
    \draw[thick]  (1/2, 1/2) circle (1/4) node[above right=1pt] {};
    
    \fill[white] (1/2,-1/2) circle (1/4);
    \draw[thick]  (1/2,-1/2) circle (1/4) node[above right=1pt] {};
    
    \fill[white] (-1/2,-1/2) circle (1/4);
    \draw[thick]  (-1/2,-1/2) circle (1/4) node[above right=1pt] {};

    \fill[gray!70] (0,0) circle (1/8);

    \draw[blue!70!black, thick, ->, domain=0:1.5, samples=400, smooth, variable=\t]
    plot ({5/8*0.12*0.9*cos(100*pi*\t)*\t}, {5/8*0.12*0.9*sin(100*pi*\t)*\t});

    \draw[->] (-1.1,0) -- (1.1,0) node[right] {$x$};
    \draw[->] (0,-1.1) -- (0,1.1) node[above] {$y$};

    \draw[dashed, thick, gray!70] ( 0.5,-1) -- ( 0.5,1) node[above right] {};
    \draw[dashed, thick, gray!70] (-0.5,-1) -- (-0.5,1) node[above left] {};
    \draw[dashed, thick, gray!70] (-1, 0.5) -- (1, 0.5) node[above right] {};
    \draw[dashed, thick, gray!70] (-1,-0.5) -- (1,-0.5) node[above left] {};

    \draw[dashed, thick, gray!70]  (1/8,-1) --  (1/8,1) node[above right] {}; 
    \draw[dashed, thick, gray!70] (-1/8,-1) -- (-1/8,1) node[above right] {}; 
    \draw[dashed, thick, gray!70]  (-1,1/8) --  (1,1/8) node[above right] {}; 
    \draw[dashed, thick, gray!70] (-1,-1/8) -- (1,-1/8) node[above right] {}; 

    \draw[red!70]  (-1/2,-1/2) --  (-1/2,1/2) node[above right] {}; 
    \draw[red!70]  ( 1/2,-1/2) --  ( 1/2,1/2) node[above right] {}; 
    \draw[red!70]  (-1/2, 1/2) --  ( 1/2, 1/2) node[above right] {}; 
    \draw[red!70]  (-1/2,-1/2) --  ( 1/2,-1/2) node[above right] {}; 

    \end{scope} 
    
    \fill (0,0) circle (0.008) node[below left] {$0$};

    \fill (-1/2,-1/2) circle (0.008) node[below left] {$-\frac{1}{2}$};

    \fill (-1/4,-1/2) circle (0.008) node[below left] {$-\frac{1}{4}$};
    \fill ( 1/4,-1/2) circle (0.008) node[below right] {$\frac{1}{4}$};

    \fill (-1/8,-1/2) circle (0.008) node[below] {$-\frac{1}{8}$};
    \fill ( 1/8,-1/2) circle (0.008) node[below] {$\frac{1}{8}$};

    \fill (-1/2, -1/8) circle (0.008) node[left] {$-\frac{1}{8}$};
    \fill (-1/2, 1/8) circle (0.008) node[left] {$\frac{1}{8}$};
    \fill (-1/2, -1/4) circle (0.008) node[left] {$-\frac{1}{4}$};
    \fill (-1/2, 1/4) circle (0.008) node[left] {$\frac{1}{4}$};
    \fill (-1/2, 1/2) circle (0.008) node[left] {$\frac{1}{2}$};
    
\end{tikzpicture}
\caption*{Case 2 (a)}
\end{minipage}
\begin{minipage}[c]{0.3\textwidth}
\centering
\begin{tikzpicture}[scale=4]

    \begin{scope}
    \clip (-0.5,-0.5) rectangle (0.5,0.5);

    \fill[gray!40, even odd rule] 
    (-0.5,-0.5) rectangle (0.5,0.5) 
    (-0.5,0.5) circle (0.25)         
    (0.5,0.5) circle (0.25)          
    (0.5,-0.5) circle (0.25)         
    (-0.5,-0.5) circle (0.25);       

    \fill[white] (-1/2,1/2) circle (1/4);
    \draw[thick]  (-1/2,1/2) circle (1/4) node[above right=1pt] {};
    
    \fill[white] (1/2, 1/2) circle (1/4);
    \draw[thick]  (1/2, 1/2) circle (1/4) node[above right=1pt] {};
    
    \fill[white] (1/2,-1/2) circle (1/4);
    \draw[thick]  (1/2,-1/2) circle (1/4) node[above right=1pt] {};
    
    \fill[white] (-1/2,-1/2) circle (1/4);
    \draw[thick]  (-1/2,-1/2) circle (1/4) node[above right=1pt] {};

    \fill[gray!70] (0,0) circle (1/8);

    \draw[blue!70!black, thick, ->, samples=200, smooth, domain=0:0.7, variable=\t]
        plot ({-9/10*0.12*cos(200*pi*\t)*exp(1.5*\t)*\t}, {0.12*sin(15*pi*\t)*exp(1.5*\t)});
    \draw[blue!70!black, dashed, ->, samples=200, smooth, domain=0.7:0.9, variable=\t]
        plot ({-9/10*0.12*cos(200*pi*\t)*exp(1.5*\t)*\t}, {0.12*sin(15*pi*\t)*exp(1.5*\t)});

    \draw[->] (-1.1,0) -- (1.1,0) node[right] {$x$};
    \draw[->] (0,-1.1) -- (0,1.1) node[above] {$y$};

    \draw[dashed, thick, gray!70] ( 0.5,-1) -- ( 0.5,1) node[above right] {};
    \draw[dashed, thick, gray!70] (-0.5,-1) -- (-0.5,1) node[above left] {};
    \draw[dashed, thick, gray!70] (-1, 0.5) -- (1, 0.5) node[above right] {};
    \draw[dashed, thick, gray!70] (-1,-0.5) -- (1,-0.5) node[above left] {};

    \draw[dashed, thick, gray!70]  (1/8,-1) --  (1/8,1) node[above right] {}; 
    \draw[dashed, thick, gray!70] (-1/8,-1) -- (-1/8,1) node[above right] {}; 
    \draw[dashed, thick, gray!70]  (-1,1/8) --  (1,1/8) node[above right] {}; 
    \draw[dashed, thick, gray!70] (-1,-1/8) -- (1,-1/8) node[above right] {}; 

    \draw[red!70]  (-1/2,-1/2) --  (-1/2,1/2) node[above right] {}; 
    \draw[red!70]  ( 1/2,-1/2) --  ( 1/2,1/2) node[above right] {}; 
    \draw[red!70]  (-1/2, 1/2) --  ( 1/2, 1/2) node[above right] {}; 
    \draw[red!70]  (-1/2,-1/2) --  ( 1/2,-1/2) node[above right] {}; 

    \end{scope} 
    
    \fill (0,0) circle (0.008) node[below left] {$0$};

    \fill ( 1/2,-1/2) circle (0.008) node[below right] {$\frac{1}{2}$};

    \fill (-1/4,-1/2) circle (0.008) node[below left] {$-\frac{1}{4}$};
    \fill ( 1/4,-1/2) circle (0.008) node[below right] {$\frac{1}{4}$};

    \fill (-1/8,-1/2) circle (0.008) node[below] {$-\frac{1}{8}$};
    \fill ( 1/8,-1/2) circle (0.008) node[below] {$\frac{1}{8}$};

    \fill (-1/2, -1/8) circle (0.008) node[left] {$-\frac{1}{8}$};
    \fill (-1/2, 1/8) circle (0.008) node[left] {$\frac{1}{8}$};
    \fill (-1/2, -1/4) circle (0.008) node[left] {$-\frac{1}{4}$};
    \fill (-1/2, 1/4) circle (0.008) node[left] {$\frac{1}{4}$};
    \fill (-1/2, 1/2) circle (0.008) node[left] {$\frac{1}{2}$};
    
\end{tikzpicture}
\caption*{Case 2 (b)}
\end{minipage}
\caption{Different cases of $\gamma$ within a single cell $[-\frac{1}{2}, \frac{1}{2}]^2$ of the domain.}
\label{fig:example}
\end{figure}

\subsection{Homogenization in a dilute setting and domains with defects}

In this section, following the framework in \cite{han_quantitative_2024}, we study the dilute case and the defective domain case under \ref{itm:A7}. 
As stated in \cite{han_quantitative_2024}, if $H $ satisfies \ref{itm:A7}, then $L$ also satisfies a similar property \cite[Lemma 2.5]{han_quantitative_2024}.
\begin{lem}[Restatement of {\cite[Lemma 2.5]{han_quantitative_2024}}]\label{prop:Lmin0}
    Assume \ref{itm:A3} and \ref{itm:A7}.
    Then, for $y\in \R^n$,
    \[
    \min_{v\in \R^n} L(y,v)=L(y,0)=0.
    \]
\end{lem}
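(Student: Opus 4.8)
The statement follows directly from the definition of the Legendre transform \eqref{eq:L} together with \ref{itm:A7}, and in fact \ref{itm:A3} is needed only to know that $L$ is well-behaved (finite, lower semicontinuous) — the inequalities below do not use convexity. The plan is to establish two facts: (1) $L(y,0)=0$, and (2) $L(y,v)\geq 0$ for every $v\in\R^n$; together these give $\min_{v}L(y,v)=L(y,0)=0$, with the minimum attained at $v=0$.

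For step (1), I would simply evaluate the definition at $v=0$:
\[
L(y,0)=\sup_{p\in\R^n}\bigl(0\cdot p-H(y,p)\bigr)=-\inf_{p\in\R^n}H(y,p).
\]
By \ref{itm:A7}, $\inf_{p}H(y,p)=H(y,0)=0$, so $L(y,0)=0$. (Equivalently, \ref{itm:A7} says $H(y,p)\geq 0$ for all $p$ with equality at $p=0$, so the supremum of $-H(y,p)$ equals $0$.)

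For step (2), fix $v\in\R^n$ and take the particular choice $p=0$ in the supremum defining $L(y,v)$:
\[
L(y,v)=\sup_{p\in\R^n}\bigl(p\cdot v-H(y,p)\bigr)\geq 0\cdot v-H(y,0)=-0=0,
\]
again using $H(y,0)=0$ from \ref{itm:A7}. Combining (1) and (2), $L(y,v)\geq 0=L(y,0)$ for all $v$, hence $\min_{v\in\R^n}L(y,v)=L(y,0)=0$, and the minimizer $v=0$ is realized. There is no real obstacle here; the only point worth a sentence is to note that the lower bound on $H$ from \eqref{eq:H-lower-bd} (or \eqref{eqn:K_0H}) guarantees $L(y,\cdot)$ is finite-valued, so the supremum and the claimed minimum are genuine (finite) quantities rather than $\pm\infty$, and that \ref{itm:A3} ensures $H=L^{*}$ so the statement is the honest Legendre dual of \ref{itm:A7}.
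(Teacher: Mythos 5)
Your proof is correct: evaluating the Legendre transform at $v=0$ gives $L(y,0)=-\inf_p H(y,p)=0$ by \ref{itm:A7}, and taking $p=0$ in the supremum gives $L(y,v)\geq -H(y,0)=0$ for every $v$, which together yield the claim. The paper itself gives no proof here (it simply cites \cite[Lemma~2.5]{han_quantitative_2024}), and your argument is the standard one that the cited lemma uses, so there is nothing to add.
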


We note that under assumption \ref{itm:A7}, the solution $u^\varepsilon$ to the Cauchy problem with the Dirichlet boundary condition \eqref{eqn:PDE_epsilon} coincides with that of the state-constraint problem \eqref{eqn:PDE_sc}.

\begin{lem}\label{lem:dirichletsc}
Assume \ref{itm:A1}--\ref{itm:A5} and \ref{itm:A7}. For $\varepsilon>0$, let $u^\varepsilon$ be the viscosity solution to \eqref{eqn:PDE_epsilon} and let $u$ be the viscosity solution to \eqref{eqn:PDE_limit}, respectively. Then the following hold:
\begin{enumerate}
    \item[$\mathrm{(a)}$]$u^\varepsilon=\hat{u}^\varepsilon$ where $\hat{u}^\varepsilon$ solves
    \begin{equation}\label{eqn:PDE_sc}
    \left\{\begin{aligned}
    \hat{u}_t^\varepsilon+H\left(\frac{x}{\varepsilon}, D \hat{u}^\varepsilon\right) & \leq 0 \qquad \quad \text{in } \Omega_\varepsilon \times (0, \infty),\\
    \hat{u}_t^\varepsilon+H\left(\frac{x}{\varepsilon}, D\hat{u}^\varepsilon\right) & \geq 0 \qquad \quad \text{on } \overline{\Omega}_\varepsilon \times (0, \infty), \\
    \hat{u}^\varepsilon(x,0) & =g(x) \,\,\,\, \quad \text{on } \overline{\Omega}_\varepsilon \times \{t=0\}.
    \end{aligned}
    \right.
\end{equation}
    \item[$\mathrm{(b)}$]$u = \hat{u}$ where $\hat{u}$ solves
\begin{equation}\label{eqn:PDE_Rn}
    \left\{\begin{aligned}
    \hat{u}_t+\overline{H} \left(D\hat{u}\right) & = 0 \qquad \quad \text{in } \mathbb{R}^n \times (0, \infty), \\
    \hat{u}(x, 0)&=g(x) \, \, \,\quad \text{on } \mathbb{R}^n \times \left\{t=0\right\}.
    \end{aligned}
    \right.
\end{equation}
\end{enumerate}
\end{lem}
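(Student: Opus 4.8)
I would deduce both identities by comparing optimal control representations, the key point being that assumption \ref{itm:A7} has two consequences that together render the Dirichlet obstacle term inactive. First, by Lemma \ref{prop:Lmin0}, $L(y,0)=0$ for every $y\in\R^n$, so splicing a stationary segment into an admissible curve costs nothing; second, by \ref{itm:A5} and the definition of $\overline b$, one has $g(y)\le\overline b(y)\le b(y,y/\varepsilon)$ for every $y$, so exiting through $\partial\Omega_\varepsilon$ at a positive time and paying the Dirichlet cost is never cheaper than staying inside and paying $g$ at time $0$.

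\textbf{Part (a).} Recall the state-constraint value function admits the representation $\hat u^\varepsilon(x,t)=\inf\{\int_0^t L(\xi(s)/\varepsilon,\dot\xi(s))\,ds+g(\xi(0)):\xi\in\mathcal{C}(x,t;\overline{\Omega}_\varepsilon)\}$ (see \cite{Soner1986OptimalI,Capuzzo-Dolcetta1990HamiltonJacobiConstraints}, or the appendix). The inequality $u^\varepsilon\le\hat u^\varepsilon$ is immediate from \eqref{eqn:ocfue}: any curve admissible for $\hat u^\varepsilon$ is admissible for $u^\varepsilon$ with the choice $\tau=0$, since then $(\xi(0),0)\in\overline{\Omega}_\varepsilon\times\{0\}\subset\partial Q^\varepsilon_t$ and $f_\varepsilon(\xi(0),0)=g(\xi(0))$, so the infimum defining $u^\varepsilon$ runs over a larger family. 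For the reverse inequality, take $(x,t)\in\overline{\Omega}_\varepsilon\times(0,\infty)$ and let $\tau\in[0,t)$, $\xi\in\mathcal{C}(x,t;\overline{\Omega}_\varepsilon)$ with $(\xi(\tau),\tau)\in\partial Q^\varepsilon_t$. If $\tau=0$, then $\xi$ is already admissible for $\hat u^\varepsilon$ with the same cost. If $\tau\in(0,t)$, then $\xi(\tau)\in\partial\Omega_\varepsilon$, and the curve $\tilde\xi$ equal to $\xi(\tau)$ on $[0,\tau]$ and to $\xi$ on $[\tau,t]$ lies in $\mathcal{C}(x,t;\overline{\Omega}_\varepsilon)$ (because $\xi(\tau)\in\partial\Omega_\varepsilon\subset\overline{\Omega}_\varepsilon$); using $L(\cdot,0)=0$ on $[0,\tau]$ and $g(\xi(\tau))\le\overline b(\xi(\tau))\le b(\xi(\tau),\xi(\tau)/\varepsilon)=f_\varepsilon(\xi(\tau),\tau)$,
\[
\hat u^\varepsilon(x,t)\le\int_\tau^t L\Big(\tfrac{\xi(s)}{\varepsilon},\dot\xi(s)\Big)\,ds+g(\xi(\tau))\le\int_\tau^t L\Big(\tfrac{\xi(s)}{\varepsilon},\dot\xi(s)\Big)\,ds+f_\varepsilon(\xi(\tau),\tau).
\]
Taking the infimum over all admissible $(\tau,\xi)$ yields $\hat u^\varepsilon\le u^\varepsilon$, hence $u^\varepsilon=\hat u^\varepsilon$.

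\textbf{Part (b).} I would repeat the argument with Hopf--Lax formulas: Proposition \ref{prop:ValueU} for $u$, and the classical formula $\hat u(x,t)=\inf_{y\in\R^n}\{t\,\overline L(\tfrac{x-y}{t})+g(y)\}$ for the solution $\hat u$ of \eqref{eqn:PDE_Rn}. Choosing $\tau=0$ in \eqref{eq:HLubar} gives $u\le\hat u$. For $u\ge\hat u$: a term with $\tau=0$ is $\ge\hat u(x,t)$ by definition, while a term with $\tau>0$ satisfies, by comparison with the competitor for $\hat u$ that stays at $y$ on $[0,\tau]$ and then moves along a segment to $x$,
\[
\hat u(x,t)\le\tau\,\overline L(0)+(t-\tau)\,\overline L\Big(\tfrac{x-y}{t-\tau}\Big)+g(y)\le(t-\tau)\,\overline L\Big(\tfrac{x-y}{t-\tau}\Big)+f(y,\tau),
\]
using $g(y)\le\overline b(y)=f(y,\tau)$ together with $\overline L(0)\le0$; taking the infimum gives $\hat u\le u$. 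The inequality $\overline L(0)\le0$ (indeed $\overline L(0)=0$) is where \ref{itm:A7} re-enters: for arbitrary $p$, a periodic subsolution $v$ of the first line of \eqref{eqn:cell} is Lipschitz by coercivity, so $H(y,p+Dv(y))\le\overline H_\Omega(p)$ holds a.e.\ in $\Omega$, and \ref{itm:A7} forces the left-hand side to be $\ge0$; hence $\overline H_\Omega(p)\ge0$ for all $p$ and $\overline L(0)=-\inf_p\overline H_\Omega(p)\le0$, while $\overline L(0)\ge0$ because $L\ge0$. Alternatively, part (b) follows from part (a) by letting $\varepsilon\to0$ in $u^\varepsilon=\hat u^\varepsilon$ and invoking the qualitative homogenization results of \cite{Horie1998Homogenization} and \cite{han_quantitative_2024}.

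\textbf{Main difficulty.} The statement is essentially bookkeeping; the only points requiring care are the branch $\tau\in(0,t)$ in part (a) — where one must verify that prepending the stationary segment keeps the curve inside $\overline{\Omega}_\varepsilon$ and that the Dirichlet terminal cost dominates $g$ — and, in part (b), establishing $\overline L(0)=0$, which is exactly where \ref{itm:A7} is used, through the nonnegativity of the effective Hamiltonian; the homogenization route sidesteps this last step entirely.
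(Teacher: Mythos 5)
Your proof is correct and follows essentially the same route as the paper: part (a) is proved by comparing the two optimal control representations, using $L(\cdot,0)=0$ from Lemma \ref{prop:Lmin0} to prepend a stationary segment and $g\le\overline b\le b(\cdot,\cdot/\varepsilon)$ to dominate the Dirichlet exit cost. The paper omits the proof of (b) as ``analogous,'' and your Hopf--Lax version of the same argument (including the verification that $\overline L(0)=0$ under \ref{itm:A7}) is a correct instantiation of that analogy.
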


\begin{proof} We only provide the proof of $\mathrm{(a)}$, as the proof of $\mathrm{(b)}$ is analogous.
    Let $x \in \overline{\Omega}_\varepsilon$ and $t>0$. The optimal control formulas for $u^\varepsilon$ and $\hat{u}^\varepsilon$ are given by
    \begin{align*}
    u^\varepsilon(x,t) = 
    \left\lbrace 
	    \int_\tau^t L\left( \frac{\xi(s)}{\varepsilon} ,\dot{\xi}(s)\right) \,ds
	    	+
	    	f_\varepsilon \left(\xi(\tau),\tau \right)
	    	:  
	    \tau \in [0, t), \xi\in \mathcal{C}(x,t;\overline{\Omega}_\varepsilon), (\xi(\tau), \tau) \in \partial Q^\varepsilon_t
    \right\rbrace, 
    \end{align*}
and
    \begin{align*}
        \hat{u}^\varepsilon (x, t) = \inf \left\lbrace \int_0^t L\left( \frac{\xi(s)}{\varepsilon} ,\dot{\xi}(s)\right) \,ds+g\left(\xi(0)\right): \xi\in \mathcal{C}(x,t;\overline{\Omega}_\varepsilon)\right\rbrace, 
    \end{align*}
    respectively. 
    By definition, $u^\varepsilon(x,t) \leq \hat{u}^\varepsilon(x,t)$. Conversely, let $\xi\in \mathcal{C}(x,t;\overline{\Omega}_\varepsilon)$ with $\left(\xi (\tau), \tau\right) \in \left(\partial \Omega_\varepsilon \times (0, t)\right) \cup \left(\overline{\Omega}_\varepsilon \times \{0\} \right)$ for some $\tau \in [0, t)$ be an optimal path for $u^\varepsilon$, that is,
    \[
    u^\varepsilon (x, t) = \int_\tau^t L\left( \frac{\xi(s)}{\varepsilon} ,\dot{\xi}(s)\right) \,ds+f_\varepsilon \left(\xi(\tau),\tau \right).
    \]
    If $\tau=0$, then $\xi$ is also admissible for $\hat{u}^\varepsilon$, and $u^\varepsilon(x,t)\geq \hat{u}^\varepsilon(x,t)$.
   If $\tau >0$, define 
\[
\tilde{\xi}(s) := 
\begin{cases} 
\xi(\tau), & 0 \leq s \leq \tau,\\
\xi(s), & \tau \leq s \leq t.
\end{cases}
\] 
Then, by Lemma \ref{prop:Lmin0},
    \[
    \begin{aligned}
   \hat{u}^\varepsilon(x, t) &\leq  \int_0^t L\left( \frac{ \tilde{\xi}(s)}{\varepsilon} ,\dot{\tilde{\xi}}(s)\right) \,ds+g\left(\tilde{\xi}(0)\right)\\
   &=\int_0^\tau L\left( \frac{\xi(\tau)}{\varepsilon} ,0\right) \,ds +\int_\tau^t L\left( \frac{\xi(s)}{\varepsilon} ,\dot{\xi}(s)\right) \,ds+g\left(\xi(\tau)\right) \\
    &\leq \int_\tau^t L\left( \frac{\xi(s)}{\varepsilon} ,\dot{\xi}(s)\right) \,ds+ f_\varepsilon \left(\xi(\tau),\tau \right) = u^\varepsilon(x, t).
    \end{aligned}
    \]
Hence, $\hat{u}^\varepsilon (x, t)\leq u^\varepsilon (x, t)$.
\end{proof}

In light of Lemma \ref{lem:dirichletsc}, the results in \cite{han_quantitative_2024} for both the dilute case and the domain-defect case also hold for our Dirichlet problem. For completeness, we summarize the relevant settings and results below.

\subsubsection{Diluted setting} 
As previously noted, \ref{itm:D2} corresponds to the case where $\mathbb{R}^n \setminus \overline{\Omega}_\varepsilon$ has no unbounded connected components and the obstacles shrink faster than $\varepsilon$ as $\varepsilon \to 0$.
Thus in the limit it is expected to coincide with the homogenized solution of the problem without obstacles. Let $\overline{H}_{\R^n}$ be the effective Hamiltonian corresponding to $H$ in the whole space, i.e., for each $p\in \R^n$, $\overline{H}_{\R^n}(p)$ is the unique constant such that the cell problem
\begin{equation*}
    H(y, p+Dv(y)) = \overline{H}_{\R^n}(p) \qquad\text{in}\;\R^n
\end{equation*}
has a viscosity solution that is $\Z^n$-periodic. Let $\tilde{u}^\varepsilon$ and $\tilde{u}$ be solution to
\begin{equation}\label{eq:PDEeps-Rn}
    \begin{cases}
        \begin{aligned}
            \tilde{u}_t^\varepsilon + H\left(\frac{x}{\varepsilon}, D\tilde{u}^\varepsilon\right) &= 0 && \text{in}\;\R^n\times (0,\infty), \\
            \tilde{u}^\varepsilon(x,0) &= g(x) &&\text{on}\; \R^n,
        \end{aligned}
    \end{cases}
\end{equation}
and 
\begin{equation}\label{eq:PDElimitRn}
    \begin{cases}
        \begin{aligned}
            \tilde{u}_t + \overline{H}_{\R^n}\left(D\tilde{u}\right) &= 0 && \text{in}\;\R^n\times (0,\infty), \\
            \tilde{u}(x,0) &= g(x) &&\text{on}\; \R^n,
        \end{aligned}
    \end{cases}
\end{equation}
respectively. Thanks to Lemma \ref{lem:dirichletsc}, we have the following consequence. 

\begin{cor}[Restatement of {\cite[Theorem 1.2]{han_quantitative_2024}}] \label{cor:diluted} Assume \ref{itm:A1}--\ref{itm:A7} and \ref{itm:D2}. Let $u^\varepsilon, \tilde{u}^\varepsilon$ and $\tilde{u}$ be solution to \eqref{eqn:PDE_epsilon}, \eqref{eq:PDEeps-Rn} and \eqref{eq:PDElimitRn}, respectively. Then for $(x,t) \in \overline{\Omega}_\varepsilon\times [0,\infty)$, 
\begin{align*}
    \tilde{u}^\varepsilon(x,t) \leq u^\varepsilon(x,t) \leq \tilde{u}^\varepsilon(x,t) + C(\varepsilon +\eta(\varepsilon)t)
\end{align*}
for $C = C(n,\partial D, H, \mathrm{Lip}(g))$. Consequently, we have 
\begin{align*}
    \Vert u^\varepsilon(\cdot, t) - \tilde{u}(\cdot, t)\Vert_{L^\infty(\overline{\Omega}_\varepsilon)} \leq C(\varepsilon + \eta(\varepsilon) t).
\end{align*}
\end{cor}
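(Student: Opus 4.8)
The plan is to deduce the corollary from the corresponding state-constraint estimate in \cite{han_quantitative_2024} via the reduction already in hand. By Lemma~\ref{lem:dirichletsc}(a), assumption \ref{itm:A7} forces $u^\varepsilon=\hat u^\varepsilon$, where $\hat u^\varepsilon$ solves the state-constraint problem \eqref{eqn:PDE_sc} on $\Omega_\varepsilon=\varepsilon\Omega^{\eta(\varepsilon)}$ and carries the representation
\[
    \hat u^\varepsilon(x,t)=\inf\Big\{\int_0^t L\Big(\tfrac{\xi(s)}{\varepsilon},\dot\xi(s)\Big)\,ds+g(\xi(0)):\ \xi\in\mathcal C(x,t;\overline\Omega_\varepsilon)\Big\},
\]
whereas $\tilde u^\varepsilon$ has the same formula with the admissible class enlarged to $\mathcal C(x,t;\R^n)$. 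Thus the desired two-sided bound $\tilde u^\varepsilon\le u^\varepsilon\le\tilde u^\varepsilon+C(\varepsilon+\eta(\varepsilon)t)$ is exactly \cite[Theorem~1.2]{han_quantitative_2024} read off for $\hat u^\varepsilon$, and it remains to recall the two directions.

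For the lower bound, $\Omega^{\eta(\varepsilon)}\subset\R^n$ gives $\mathcal C(x,t;\overline\Omega_\varepsilon)\subset\mathcal C(x,t;\R^n)$, so the infimum defining $\hat u^\varepsilon$ runs over a smaller family than that for $\tilde u^\varepsilon$, whence $\tilde u^\varepsilon(x,t)\le\hat u^\varepsilon(x,t)=u^\varepsilon(x,t)$. For the upper bound, fix $(x,t)\in\overline\Omega_\varepsilon\times[0,\infty)$ and take a near-minimizer $\xi$ for $\tilde u^\varepsilon(x,t)$; Lemma~\ref{lem:velocity bound}, applied in the unperforated setting, lets us assume $\|\dot\xi\|_{L^\infty}\le M_0$, so $\xi$ has length at most $M_0t$ and meets at most $C(1+M_0t/\varepsilon)$ of the $\varepsilon$-cells. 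In each such cell the obstacle $\varepsilon(\eta(\varepsilon)\overline D+z)$ lies in a ball of radius of order $\varepsilon\eta(\varepsilon)$; following \cite{han_quantitative_2024} one reroutes $\xi$ around each obstacle it hits, each detour adding length of order $\varepsilon\eta(\varepsilon)$ and—using \eqref{eqn:K_0L} together with Lemma~\ref{prop:Lmin0}, so that the zero-velocity portions cost nothing—contributing running cost of order $\varepsilon\eta(\varepsilon)$. Summing over the cells gives an excess of order $\varepsilon+\eta(\varepsilon)t$, and an additional $O(\varepsilon)$ correction restores the endpoints $\xi(t)=x$ and $\xi(0)$; the resulting curve is admissible for $\hat u^\varepsilon(x,t)$ and has cost at most $\tilde u^\varepsilon(x,t)+C(\varepsilon+\eta(\varepsilon)t)$ with $C=C(n,\partial D,H,\mathrm{Lip}(g))$. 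Combining the sandwich with the classical optimal periodic-homogenization rate $\|\tilde u^\varepsilon(\cdot,t)-\tilde u(\cdot,t)\|_{L^\infty(\R^n)}\le C\varepsilon$ of \cite{tran_optimal_2021} for \eqref{eq:PDEeps-Rn}--\eqref{eq:PDElimitRn} yields the stated consequence.

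The main obstacle is the rerouting step: one must check that the detour around each shrinking obstacle can be performed inside $\overline\Omega_\varepsilon$ by a uniformly $C^1$ curve of controlled velocity—this relies on the $C^1$-regularity of $\partial D$ and a quantitative boundary-following lemma, \cite[Lemma~A.1]{han_quantitative_2024}—and that the number of obstacles met along $\xi$ is genuinely $O(1+t/\varepsilon)$, which is precisely where the velocity bound $M_0$, independent of $n$ and $\partial\Omega$ by Lemma~\ref{lem:velocity bound}, is used. Since all of this is already carried out in \cite{han_quantitative_2024} for the state-constraint problem, once the identification $u^\varepsilon=\hat u^\varepsilon$ from Lemma~\ref{lem:dirichletsc} is invoked, the corollary follows with no new estimates.
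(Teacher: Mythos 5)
Your proposal is correct and takes essentially the same route as the paper: identify $u^\varepsilon$ with the state-constraint solution $\hat u^\varepsilon$ via Lemma~\ref{lem:dirichletsc}(a) (which is where assumption \ref{itm:A7} enters), and then read the sandwich estimate off \cite[Theorem~1.2]{han_quantitative_2024}; the paper simply cites that theorem, whereas you sketch its rerouting argument, which is fine but not new work. One small imprecision: the velocity bound for near-minimizers of $\tilde u^\varepsilon$ in the unperforated problem is not an application of Lemma~\ref{lem:velocity bound} (which is stated for the perforated Dirichlet problem) but rather the standard Lipschitz/velocity estimate for the Hopf--Lax formula on $\mathbb{R}^n$; this does not affect the argument.
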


\subsubsection{Domain with defects} Recall that \ref{itm:D3} corresponds to the case where the obstacles form non-diluted holes, though some may be missing. Let $W$ be the set defined in \ref{itm:D3}, and let $w^\varepsilon$ denote the viscosity solution of
\begin{equation}\label{eq:PDEeps-defects}
    \begin{cases}
        \begin{aligned}
            w_t^\varepsilon + H\left(\frac{x}{\varepsilon}, Dw^\varepsilon\right) &= 0 && \text{in}\;W_\varepsilon\times (0,\infty), \\
            w^\varepsilon(x,t) &= b\left(x, \frac{x}{\varepsilon}\right) &&\text{on}\;\partial W_\varepsilon\times(0,\infty),\\
            w^\varepsilon(x,0) &= g(x) &&\text{on}\; \overline{W}_\varepsilon\times \{0\}. 
        \end{aligned}
    \end{cases}
\end{equation}
By \cite[Theorem 1.3]{han_quantitative_2024} and Lemma \ref{lem:dirichletsc}, we have the following consequence.

\begin{cor}[Restatement of {\cite[Theorem 1.3]{han_quantitative_2024}}] \label{cor:defect} Assume \ref{itm:A1}--\ref{itm:A7} and \ref{itm:D3}. Let $u^\varepsilon, \tilde{w}^\varepsilon$ and $u$ be solution to \eqref{eqn:PDE_epsilon}, \eqref{eq:PDEeps-defects} and \eqref{eqn:PDE_limit}, respectively. Then for $(x,t) \in \overline{W}_\varepsilon\times [0,\infty)$, 
\begin{align*}
    - C\left(\varepsilon + (M_0t + |x|+1)\omega\left(\frac{\varepsilon}{M_0t + |x|}\right) \right) \leq w^\varepsilon (x,t) - u^\varepsilon(x,t)\leq 0
\end{align*}
for $C = C(n,\partial D, H, \mathrm{Lip}(g))$. Consequently, for $(x,t) \in \overline{W}_\varepsilon\times [0,\infty)$, 
\begin{align*}
   |w^\varepsilon(x,t) - u(x,t)| \leq C(M_0t+|x|+1)\omega\left(\frac{\varepsilon}{M_0 t + |x|}\right) + C\varepsilon.
\end{align*}
\end{cor}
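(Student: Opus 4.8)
The plan is to reduce the whole statement to the state-constraint theory of \cite{han_quantitative_2024} via Lemma \ref{lem:dirichletsc}, and then to combine the resulting comparison between $w^\varepsilon$ and $u^\varepsilon$ with the $\mathcal{O}(\varepsilon)$ rate of Theorem \ref{thm:main1}.

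First, I would invoke Lemma \ref{lem:dirichletsc}: part (a) gives $u^\varepsilon = \hat u^\varepsilon$, where $\hat u^\varepsilon$ solves the state-constraint problem \eqref{eqn:PDE_sc} on $\Omega_\varepsilon$, and part (b) gives $u = \hat u$, the solution of the whole-space homogenized problem \eqref{eqn:PDE_Rn}. The proof of Lemma \ref{lem:dirichletsc} uses only the optimal control representation together with Lemma \ref{prop:Lmin0} (the identity $\min_{v} L(y,v) = L(y,0) = 0$), and \emph{not} the $\mathbb{Z}^n$-periodicity of the domain; hence the same argument applies verbatim on the defective domain $W_\varepsilon$, giving $w^\varepsilon = \hat w^\varepsilon$, where $\hat w^\varepsilon$ is the viscosity solution of the state-constraint problem on $W_\varepsilon$. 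In particular $w^\varepsilon - u^\varepsilon = \hat w^\varepsilon - \hat u^\varepsilon$ on $\overline{W}_\varepsilon \times [0,\infty)$, with $u^\varepsilon$ understood through its (extended) optimal control value as in \cite{han_quantitative_2024}.

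Next, with $\hat w^\varepsilon$ and $\hat u^\varepsilon$ in hand, I would quote \cite[Theorem~1.3]{han_quantitative_2024}, which in the state-constraint setting gives, for every $(x,t) \in \overline{W}_\varepsilon \times [0,\infty)$,
\begin{align*}
    -C\left(\varepsilon + (M_0 t + |x| + 1)\,\omega\!\left(\tfrac{\varepsilon}{M_0 t + |x|}\right)\right)
    \;\leq\; \hat w^\varepsilon(x,t) - \hat u^\varepsilon(x,t) \;\leq\; 0,
\end{align*}
with $C = C(n, \partial D, H, \mathrm{Lip}(g))$ and $M_0 = M_0(H, \mathrm{Lip}(g))$ the velocity bound of Lemma \ref{lem:velocity bound}. (The upper bound $\le 0$ reflects that $\Omega_\varepsilon \subset W_\varepsilon$, so minimizing the running cost over paths in $\overline{W}_\varepsilon$ can only decrease the value.) Translating this through the identifications of the previous step yields exactly the first two-sided estimate in the corollary.

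Finally, for the stated consequence I would apply the triangle inequality
\begin{align*}
    |w^\varepsilon(x,t) - u(x,t)| \;\leq\; |w^\varepsilon(x,t) - u^\varepsilon(x,t)| + |u^\varepsilon(x,t) - u(x,t)|,
\end{align*}
bounding the first term by the estimate just obtained and the second by $C\varepsilon$ from Theorem \ref{thm:main1}, and then absorbing the two $\varepsilon$-terms into a single constant. All of the analytic work is already contained in \cite{han_quantitative_2024} and in Theorem \ref{thm:main1}; the only genuine obstacle is the bookkeeping in the first step — verifying that Lemma \ref{lem:dirichletsc} transfers to the non-periodic domain $W_\varepsilon$ (so that $\partial W_\varepsilon$ still has the regularity used by the optimal control formula) and checking that the constants $M_0$, $C$ and the modulus $\omega$ match those in \cite[Theorem~1.3]{han_quantitative_2024}, so that nothing is lost under the identifications $u^\varepsilon = \hat u^\varepsilon$, $w^\varepsilon = \hat w^\varepsilon$, $u = \hat u$.
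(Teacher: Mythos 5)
Your proposal is correct and matches the paper's (implicit) argument: the paper justifies this corollary with a one-line citation to \cite[Theorem~1.3]{han_quantitative_2024} and Lemma~\ref{lem:dirichletsc}, and your write-up supplies exactly the bookkeeping that citation asks the reader to check — in particular, you rightly flag that Lemma~\ref{lem:dirichletsc} must be re-run on the non-$\Z^n$-periodic domain $W_\varepsilon$ and that this works because its proof uses only the $C^1$-regularity of the boundary, the optimal control formula, and Lemma~\ref{prop:Lmin0}, not periodicity. The only minor deviation is cosmetic: for the final estimate you invoke Theorem~\ref{thm:main1} to bound $|u^\varepsilon - u|$, whereas the paper equivalently reads the bound $|\hat u^\varepsilon - \hat u| \le C\varepsilon$ off the state-constraint result; under the identification $u^\varepsilon = \hat u^\varepsilon$, $u = \hat u$ these are the same.
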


\begin{proof}[Proof of Proposition \ref{prop:DilutedDefect}] The proof follows from Corollaries \ref{cor:diluted} and \ref{cor:defect}. 
\end{proof}

\vspace{0.6cm}

\appendix

\section{Distance on $\partial\Omega$ and the metric in the state-constraint setting} \label{appendix:costm}

We recall the following lemma from \cite{han_quantitative_2024}, which we use to redefine paths to avoid \(\Omega^c\). 

\begin{lem}[Restatement of {\cite[Lemma A.1]{han_quantitative_2024}}]\label{lem:A1} Let $p,q\in \partial\Omega$ such that $[p,q]\cap \overline{\Omega} = \{p,q\}$. 
Then $p$ and $q$ belong to a connected component $M$  of $\partial\Omega$, and there exists a $C^1$ curve $\gamma:[0,1]\to M$ such that $\gamma(0) = p, \gamma(1) = p$, and $\int_0^1 |\dot{\gamma}(s)|\;ds \leq C_b|p-q|$
for some constant $C_b = C_b(n,\partial\Omega)$. 
\end{lem}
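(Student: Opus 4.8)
The plan is to localize the segment $[p,q]$ inside a single hole of the perforated domain and then reduce the statement to a chord--arc (bounded--distortion) estimate for the $C^1$ hypersurface $\partial\Omega$, which I would derive from the $\Z^n$-periodicity of $\Omega$ together with compactness of the quotient $\partial\Omega/\Z^n$. The length bound then splits naturally into a ``small $|p-q|$'' regime, handled by local graph charts, and a ``large $|p-q|$'' regime, handled by compactness.

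\textbf{Step 1: reduction to a boundary component.} Since $[p,q]\cap\overline\Omega=\{p,q\}$, the open segment $(p,q)$ lies in the open set $\R^n\setminus\overline\Omega$, and being connected it lies in a single connected component $H$ (a ``hole''). One checks $\partial H\subseteq\partial\Omega$, and since $\{p,q\}=\overline{(p,q)}\setminus(p,q)\subseteq\overline H\setminus H=\partial H$, we get $p,q\in\partial H$. The delicate point is that $p$ and $q$ lie in the \emph{same} component $M$ of $\partial\Omega$, i.e.\ that $\partial H$ is itself a single component of $\partial\Omega$. Here I would use that $\partial\Omega$ is a $C^1$ hypersurface --- so each component is two--sided, with $\Omega$ on one fixed side and $\R^n\setminus\overline\Omega$ on the other --- together with the connectedness of $\Omega$. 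In the setting $(\mathcal D_1)$ this is exactly the assertion that a hole has connected boundary, and it follows from a Jordan--Brouwer / Mayer--Vietoris separation argument applied to the pair $H$, $\R^n\setminus\overline H$ (fattened by a collar of $\partial H$), whose essential input is that $\R^n\setminus\overline H=\Omega\cup(\partial\Omega\setminus\partial H)\cup\bigcup_{H'\neq H}H'$ is connected \emph{because $\Omega$ is}. Setting $M:=\partial H$ gives the first assertion of the lemma.

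\textbf{Steps 2--3: the length bound.} By compactness of $\partial\Omega/\Z^n$ and the $C^1$ hypothesis, there exist $\rho_0=\rho_0(n,\partial\Omega)>0$ and $L=L(n,\partial\Omega)$ such that for every $x_0\in\partial\Omega$ the set $\partial\Omega\cap B(x_0,2\rho_0)$ is, in coordinates adapted to $T_{x_0}\partial\Omega$, the graph of a $C^1$ function with Lipschitz constant $\le L$ (with $\Omega$ on one side), and on $B(x_0,\rho_0)$ the intrinsic distance along this chart is at most $(1+L)$ times the Euclidean distance. If $|p-q|\le\rho_0/(2+2L)$, the hypothesis $[p,q]\cap\overline\Omega=\{p,q\}$ forces $p$ and $q$ onto the \emph{same} graph sheet --- were they on two distinct sheets of $\partial\Omega$ meeting $B(p,\rho_0)$, the region between them on the $\Omega$-side would be hit by the segment, a contradiction --- and one reads off a $C^1$ curve in $M$ from $p$ to $q$ of length $\le(1+L)|p-q|$. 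If instead $|p-q|>\rho_0/(2+2L)$, a Harnack--chain (quasiconvexity) argument on the compact manifold $\partial\Omega/\Z^n$ yields $C_1=C_1(n,\partial\Omega)$ with $d_M(x,y)\le C_1(|x-y|+1)$ for all $x,y\in M$ --- immediate when $M$ is compact, since then $M$ has diameter and $(n-1)$-dimensional measure bounded in terms of $n,\partial\Omega$, and still valid via the chain on the quotient when $M$ is a non--compact periodic component; since $|p-q|$ is now bounded below by a constant, this gives $d_M(p,q)\le C_b|p-q|$, and smoothing a quasi--geodesic produces an honest $C^1$ curve in $M$ of length $\le 2\,d_M(p,q)$. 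Taking the larger of the two constants yields the $C_b=C_b(n,\partial\Omega)$ in the statement.

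\textbf{Main obstacle.} The nontrivial step is Step~1: showing that the two endpoints lie on a single component of $\partial\Omega$, equivalently that a hole has connected boundary. This is precisely where the \emph{connectedness} of $\Omega$ (rather than merely its $C^1$ regularity) is indispensable, and it is also tied to the careful use of the constraint $[p,q]\cap\overline\Omega=\{p,q\}$ in the short--distance case to exclude $p$ and $q$ sitting on two distinct, nearby sheets of $\partial\Omega$. Once the problem is reduced to a single $C^1$ boundary component, the metric estimates are routine consequences of periodicity and compactness.
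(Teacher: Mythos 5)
The paper does not prove this lemma: it is labeled a restatement of \cite[Lemma A.1]{han_quantitative_2024} and used as a black box, so there is no in-paper argument to compare yours against. (Also, the statement as quoted contains a typo: $\gamma(1)=p$ should read $\gamma(1)=q$.) I will therefore assess your sketch on its own merits.

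Your overall architecture --- reduce to a single boundary component, then prove a chord--arc (bounded-distortion) estimate by splitting into a short-range local-chart regime and a long-range compactness regime --- is the natural one, and is plausibly what the reference does. However, two steps need more than a hand-wave. In Step~1, the assertion that $\partial H$ is a single component of $\partial\Omega$ is correct but nontrivial. For a bounded hole $H$ the right input is an Alexander-duality count: a compact $C^1$ hypersurface $\Sigma\subset\R^n$ with $k$ components separates $\R^n$ into $k+1$ components, so since $\R^n\setminus\partial H$ has exactly two components ($H$ and $\R^n\setminus\overline H$, the latter connected by your argument using connectedness of $\Omega$ and that distinct holes have disjoint $C^1$ boundaries), $\partial H$ has one. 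For unbounded holes --- which do occur under $(\mathcal D_1)$ for $n\ge 3$, e.g.\ a periodic array of cylinders --- $\partial H$ is noncompact and this duality argument does not apply as stated; you would need to pass to the quotient $\T^n$ or argue directly. You invoke the quotient in Steps~2--3 but not in Step~1, which is where the topology is actually delicate.

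In Step~2 the parenthetical justification --- ``were they on two distinct sheets of $\partial\Omega$ meeting $B(p,\rho_0)$, the region between them on the $\Omega$-side would be hit by the segment'' --- is incorrect. Two nearby sheets of $\partial\Omega$ can bound the \emph{hole} $H$ across a thin neck; the region between them is then $H$, not $\Omega$, and $[p,q]$ never meets $\Omega$ even though $p,q$ sit on different sheets. The conclusion is still attainable, but for a different reason: if $\rho_0$ is chosen so that $\partial\Omega\cap B(x_0,2\rho_0)$ is a \emph{single} Lipschitz graph for every $x_0\in\partial\Omega$ (a uniform such $\rho_0$ exists by compactness of $\partial\Omega/\Z^n$ and $C^1$ regularity --- in particular $\rho_0$ must be smaller than any neck width of $\partial\Omega$), then $|p-q|<\rho_0$ forces $q$ onto the same sheet as $p$ automatically, and the constraint $[p,q]\cap\overline\Omega=\{p,q\}$ is not even needed in the short-range case. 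If you only request local graph charts without excluding other sheets from the ball, the chord--arc bound genuinely fails near thin necks, so this uniformity is the crux of the compactness argument and deserves to be stated explicitly.
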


We recall the cost function \(m\) and its properties from \cite{han_quantitative_2024} to study the convergence of solutions of the state-constraint problem \eqref{eqn:PDE_sc} to the limit problem \eqref{eqn:PDE_Rn}.

\begin{defn}[{\cite[Definitions~1 and 2]{han_quantitative_2024}}]
Assume \ref{itm:A1}--\ref{itm:A3}. For $t>0$, the cost and extended cost are defined by
\begin{align*}
    m(t,y,x) &:= \inf 
    \left\lbrace 
        \int_0^t L(\gamma(s), \dot{\gamma}(s))\;ds: 
        \gamma\in \mathrm{AC}\left([0,t];\overline{\Omega}\right), 
        \gamma(0) = y, \gamma(t) = x
    \right\rbrace, && y,x \in \overline{\Omega}, \\
    m^*(t,y,x) &:= \inf 
    \left\lbrace 
        m(t,\tilde{x}, \tilde{y}): \tilde{x}, \tilde{y} \in \partial\Omega, 
        \tilde{x} - x\in Y, \tilde{y} - y\in Y
    \right\rbrace, && y,x\in \R^n.
\end{align*}
\end{defn}

Under \ref{itm:A1}--\ref{itm:A5}, two important properties of \(m^*\) include, as established in \cite[Proposition 2.3 and Lemmas 3.1 and 3.2]{han_quantitative_2024} and extending \cite[Lemma 4]{burago_1992}:
\begin{enumerate}
    \item if \(t, s > 0\) with \(t \geq 1\) or \(s \geq 1\) and $x,y,z\in \R^n$ with $|x-y|\leq M_0 t, |y-z|\leq M_0s$, then
    \begin{equation*}
        m^*(t+s, x,z) \leq m^*(t,x,y) + m^*(s, y, z) + C;
    \end{equation*}
    \item if $t\geq 1$ and $y\in \R^n$ such that $|y|\leq M_0t$ 
    \begin{align*}
        2m^*(t,0,y) - C\leq m^*(2t,0,2y) \leq 2m^*(t,0,y) + C.
    \end{align*}
\end{enumerate}
    Here all the constants depend only on $n, \partial\Omega, M_0$ and $K_0$ from \eqref{eqn:K_0H}. These properties ensure that the average metric \(\overline{m}^\ast\) is well-defined, and allow an error estimate as in Proposition \ref{prop:mbarrate}.

\section{Derivative along absolutely continuous curves}
Next, we present an auxiliary result extending \cite[Lemma 4.2]{mitake_asymptotic_2008}. We note that the condition (B) in \cite{mitake_asymptotic_2008} is weaker than assuming $\partial\Omega$ is of class $C^1$ (i.e., locally given by the graph of a $C^1$ function). In contrast, both in this paper and in \cite{han_quantitative_2024}, we restrict to $C^1$ domains. 

We first present a simple proof for $C^2$ domains, noting that the vector field is only $C^1$ rather than $C^\infty$ as in \cite[Lemma 4.2]{mitake_asymptotic_2008}. The argument uses the fact that, for a $C^2$ domain, the distance function is $C^2$ in a neighborhood of the boundary (true for bounded or periodic domains), which allows the construction of a $C^1$ diffeomorphism that shrinks the domain inward along the normal direction. This classical approach has been applied in other state-constraint problems \cite[Proposition 3.2]{dutta_rate_2025}, \cite[Theorem VII.1]{Capuzzo-Dolcetta1990HamiltonJacobiConstraints}, and \cite[Lemma 2.10]{tu_generalized_2024} (we note a typo in \cite[Lemma 2.10]{tu_generalized_2024}, where the assumption should be $C^2$ boundary rather than $C^1$).

\begin{lem}\label{lem:smoothC2push} Let $\Omega$ be open, $\Z^n$-periodic in $\R^n$ with $C^2$-boundary. Then there exists a $C^1$ function $\zeta:\overline{\Omega}\to \R^n$ such that $x+\delta \zeta(x) \in \Omega$ for all $x\in \overline{\Omega}$ and $\delta\in(0,1)$.
\end{lem}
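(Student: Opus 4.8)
The plan is to build $\zeta$ out of the signed distance function to $\partial\Omega$. Let $d$ denote the signed distance to $\partial\Omega$, normalized so that $d>0$ in $\Omega$, $d=0$ on $\partial\Omega$, and $d<0$ in $\R^n\setminus\overline\Omega$; note that $d$ is $\Z^n$-periodic. Since $\partial\Omega$ is a $C^2$ hypersurface and, by $\Z^n$-periodicity, its image in $\mathbb{T}^n$ is a compact embedded $C^2$ hypersurface, the tubular neighborhood theorem provides a radius $\rho\in(0,1)$ and a constant $C_0>0$ — both independent of the base point, by periodicity — such that $d\in C^2(N_\rho)$ with $|\nabla d|\equiv 1$ and $\|D^2 d\|_{L^\infty(N_\rho)}\le C_0$, where $N_\rho:=\{x\in\R^n:\dist(x,\partial\Omega)<\rho\}$. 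This regularity statement for the signed distance on a uniform tube is the one genuinely nontrivial ingredient; the rest is elementary.

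Next I fix a cutoff $\theta\in C^\infty(\R;[0,1])$ with $\theta\equiv 1$ on $(-\infty,\rho/4]$ and $\theta\equiv 0$ on $[\rho/3,\infty)$, and set $a:=\min\{\rho/4,\,(2C_0)^{-1}\}$. I then define $\zeta:\overline\Omega\to\R^n$ by
\[
\zeta(x):=\begin{cases} a\,\theta(d(x))\,\nabla d(x), & x\in\overline\Omega,\ \dist(x,\partial\Omega)<\rho,\\[2pt] 0, & x\in\overline\Omega,\ \dist(x,\partial\Omega)>\rho/3.\end{cases}
\]
On the overlap $\{\rho/3<\dist(\cdot,\partial\Omega)<\rho\}$ both formulas vanish (since $\theta(d(x))=0$ there), so $\zeta$ is well-defined on $\overline\Omega$; it is $C^1$ because on the first (relatively open) piece $\theta(d)\,\nabla d$ is a product of $C^1$ fields, as $d\in C^2$, while on the second piece it is constant, and the two pieces cover $\overline\Omega$ with matching values on the overlap. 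Note also $|\zeta(x)|\le a\le\rho/4$ for all $x$.

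It then remains to verify the inward-pushing property. I would fix $x\in\overline\Omega$ and $\delta\in(0,1)$, write $v:=\delta\zeta(x)$ and $t:=\theta(d(x))\in[0,1]$. If $t=0$, then $v=0$ while $\dist(x,\partial\Omega)=d(x)\ge\rho/3>0$ forces $x\in\Omega$, so $x+\delta\zeta(x)=x\in\Omega$. If $t>0$, then $0\le d(x)<\rho/3$, and since $|v|\le a\le\rho/4$ the whole segment $[x,x+v]$ stays in $N_\rho$ (every point has distance to $\partial\Omega$ at most $d(x)+|v|<\rho/3+\rho/4<\rho$), on which $d$ is $C^2$; Taylor's formula with integral remainder and $|\nabla d(x)|=1$ gives
\[
d(x+v)\;\ge\; d(x)+\nabla d(x)\cdot v-\tfrac{C_0}{2}|v|^2\;=\;d(x)+\delta a t-\tfrac{C_0}{2}\delta^2 a^2 t^2 .
\]
Since $\delta\le 1$ and $t\le 1$, one has $\delta^2 a^2 t^2\le a(\delta a t)$, so with $a\le(2C_0)^{-1}$ and $d(x)\ge 0$ this yields $d(x+v)\ge \delta a t\bigl(1-\tfrac{C_0 a}{2}\bigr)\ge\tfrac34\,\delta a t>0$, i.e. $x+\delta\zeta(x)\in\Omega$. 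This covers all cases.

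The hard part, as noted, is the uniform $C^2$-regularity of the signed distance near $\partial\Omega$ — this is exactly where the $C^2$ hypothesis on $\partial\Omega$ together with $\Z^n$-periodicity (compactness in $\mathbb{T}^n$) are used, and it is classical. The only other subtlety is the calibration $a\le(2C_0)^{-1}$, chosen so that the linear term $\delta a t$ dominates the quadratic error $\tfrac{C_0}{2}\delta^2 a^2 t^2$ uniformly over all $\delta\in(0,1)$, not merely for small $\delta$.
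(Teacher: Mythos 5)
Your proof is correct and rests on the same scaffolding as the paper's: $C^2$ regularity of the distance to $\partial\Omega$ on a uniform tube $N_\rho$ (uniform by periodicity), a cutoff in the distance variable to localize near the boundary, and a push in the direction $\nabla d$ — which coincides with the inward unit normal at the projection $\mathrm{proj}_{\partial\Omega}(x)$, so your vector field and the paper's are essentially the same. Where the two verifications diverge is the mechanism for showing the pushed point lands in $\Omega$. The paper's computation is exact and geometric: since $x_0 + s\,\mathbf{n}_\Omega(x_0)$ stays on the normal ray through $x_0 = \mathrm{proj}_{\partial\Omega}(x)$ and $d(x_0 + s\,\mathbf{n}_\Omega(x_0)) = s$ for $s$ in the tube, the distance after pushing is $d(x) + \delta\,\tfrac{\rho}{8}\phi(d(x))$ exactly, with no Hessian estimate required. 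Your argument instead invokes a uniform bound $\|D^2 d\|_{L^\infty(N_\rho)} \le C_0$ and Taylor's formula, requiring the calibration $a \le (2C_0)^{-1}$ so the linear gain dominates the quadratic error. Both are valid; the paper's route avoids the calibration constant entirely, while yours is more analytic in style and would extend to $C^{1,1}$ boundaries where $D^2 d$ is merely bounded. One minor imprecision: in the case $t = \theta(d(x)) = 0$ you state $d(x) \ge \rho/3$, but since $\theta$ is only pinned to $1$ on $(-\infty,\rho/4]$ and to $0$ on $[\rho/3,\infty)$, it may vanish anywhere in $(\rho/4, \rho/3]$; the correct inference from $t=0$ is $d(x) > \rho/4 > 0$, which still gives $x\in\Omega$, so the conclusion is unaffected.
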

\begin{proof}[Proof of Lemma \ref{lem:smoothC2push}]
    If $\partial\Omega$ is $C^2$, the distance function $d(x) = \mathrm{dist}(x,\partial\Omega) = \inf_{y\in \partial\Omega} |x-z|$ for $x\in \overline{\Omega}$ is locally $C^2$ in a neighborhood of each point in $\partial\Omega$. By periodicity $\Omega+\Z^n=\Omega$, this neighborhood can be chosen uniformly: there exists $\rho>0$ such that 
        \begin{equation}\label{eq:C2strip}
            d \in C^2(U_\rho) 
            \qquad\text{where}\qquad 
            U_\rho = \{x \in \R^n : d(x)<\rho\}.
        \end{equation}
    For $x\in U_\rho$, the projection $x_0=\mathrm{proj}_{\partial\Omega}(x)$ is unique, $x\mapsto \mathrm{proj](x)}$ is $C^1$, and
    \begin{equation}\label{eq:dist1}
        \nabla d(x) = \frac{x-x_0}{|x-x_0|} = \mathbf{n}_\Omega(x_0), 
        \qquad        
        d(x_0+s\,\mathbf{n}_\Omega(x_0))=s \;\; \text{for } s\in[0,\tfrac{3}{4}\rho].
    \end{equation}
    Since $\partial\Omega\in C^2$, the unit inward normal map $x\mapsto \mathbf{n}_\Omega(x)$ is $C^1$. 
    \medskip

    \noindent 
    Let $\phi: \R\to \R$ be a decreasing $C^2$-function such that $\eta(s) = 0$ for $s\geq \frac{1}{2}\rho$ and $\eta = 1$ for $s\in [0,\frac{1}{4}\rho]$. We then define
        \begin{align}\label{eq:mapTdelta}
            T_\delta(x) = x + \delta\cdot\frac{\rho}{8}\phi(d(x))\cdot\mathbf{n}_\Omega(\mathrm{proj_{\partial\Omega}(x)}), \qquad x\in \overline{\Omega}, 0\leq \delta \leq 1. 
        \end{align}
        We have $T_\delta(x) = x$ for $x\in \{x\in \Omega: d(x) \geq \frac{1}{2}\rho\}$, and for $0\leq d(x) < \frac{1}{2}\rho$ we have $\mathrm{proj}_{\partial\Omega}(x) = x_0$, hence
        \begin{equation*}
            T_\delta(x) = x_0 + (x-x_0) + \delta\cdot \frac{\rho}{8}\phi(d(x))\cdot \mathbf{n}_\Omega(x_0) = x_0 + \left(d(x) + \frac{\rho}{8}\phi(d(x))\right)\cdot \mathbf{n}_\Omega(x_0).
        \end{equation*}
        We have
        \begin{align*}
            d(x) + \delta \cdot \frac{\rho}{8}\phi(d(x)) < \frac{\rho}{2} + \frac{\rho}{8} = \frac{5\rho}{8} < \frac{3\rho}{4}\qquad\text{if}\; 0< d(x) < \frac{1}{2}\rho. 
        \end{align*}
        By \eqref{eq:dist1} we have $\mathrm{proj}_{\partial\Omega}(T_\delta(x)) = x_0$ if $0< d(x) < \frac{1}{2}\rho$, and 
        \begin{align*}
            d\left(T_\delta(x)\right) =  d(x) + \delta \cdot \frac{\rho}{8}\phi(d(x)) = \begin{cases}
                \begin{aligned}
                    &d(x) + \delta \cdot \tfrac{\rho}{8} \geq \tfrac{\rho}{8} &&\text{if}\; 0\leq d(x) \leq \tfrac{\rho}{4}, \\[3mm]
                    &d(x) + \delta \cdot \tfrac{\rho}{8}\cdot \phi(d(x)) \geq \tfrac{\rho}{4} &&\text{if}\; \tfrac{\rho}{4} < d(x) \leq \tfrac{\rho}{2}. 
                \end{aligned}
            \end{cases}
        \end{align*}
        This gives us a $C^1$ map $\zeta:\overline{\Omega}\to \R^n$ such that $x+\delta \zeta(x) \in \Omega$ for all $x\in \overline{\Omega}$ and $0 < \delta < 1$. 
\end{proof}

    


Next, we give the general proof for a $C^1$ periodic domain by working on $\mathbb{T}^n$ and transferring the construction back; the argument is similar to \cite[Lemma 4.2]{mitake_asymptotic_2008}, with some care needed in the quotient space.

\begin{lem}\label{lem:smoothC1pushGeneral} Let $\Omega$ be open, $\Z^n$-periodic in $\R^n$ with $C^1$-boundary. Then there exists a $C^\infty$ function $\zeta:\overline{\Omega}\to \R^n$ such that $x+t \zeta(x) \in \Omega$ for all $x\in \overline{\Omega}$ and $t\in(0,1]$. 
\end{lem}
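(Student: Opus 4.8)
The plan is to reduce to the torus $\T^n=\R^n/\Z^n$, where the boundary becomes compact, and there to assemble the desired field from a partition of unity subordinate to $C^1$ boundary charts. (If $\partial\Om=\emptyset$ then $\Om=\R^n$ and $\zeta\equiv0$ works, so assume $\partial\Om\neq\emptyset$.) Let $\pi:\R^n\to\T^n$ be the quotient map. Since $\Om+\Z^n=\Om$, the set $\widehat\Om:=\pi(\Om)$ is open in $\T^n$, has $C^1$ boundary $\partial\widehat\Om=\pi(\bO)$, satisfies $\pi^{-1}(\widehat\Om)=\Om$, and $\overline{\widehat\Om}=\pi(\cO)$ and $\partial\widehat\Om$ are compact. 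Identifying each tangent space of $\T^n$ with $\R^n$, any $\zeta_0\in C^\infty(\T^n;\R^n)$ and any $\delta_0>0$ produce a $\Z^n$-periodic $\zeta:=\delta_0\,(\zeta_0\circ\pi)\big|_{\cO}\in C^\infty(\cO;\R^n)$, and since $\pi^{-1}(\widehat\Om)=\Om$ one has $x+t\zeta(x)\in\Om$ for $x\in\cO$ if and only if $\pi(x)+ t\delta_0\,\zeta_0(\pi(x))$ (the translate, read off from any lift) lies in $\widehat\Om$. Thus it suffices to find $\zeta_0\in C^\infty(\T^n;\R^n)$ and $\delta_0\in(0,1]$ with $\bar x+s\zeta_0(\bar x)\in\widehat\Om$ for all $\bar x\in\overline{\widehat\Om}$ and all $s\in(0,\delta_0]$; then $\zeta:=\delta_0\,(\zeta_0\circ\pi)\big|_{\cO}$ works for all $t\in(0,1]$ because $t\delta_0\in(0,\delta_0]$.

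First I would construct $\zeta_0$ so that it points strictly inward along $\partial\widehat\Om$. Let $\nu(z)$ be the inner unit normal at $z\in\partial\widehat\Om$; it depends continuously on $z$ since $\partial\widehat\Om$ is $C^1$. Around each $z$ choose a boundary chart: an orthonormal frame with $e_n=\nu(z)$ and a radius $r_z\in(0,\tfrac12)$, so that $\pi$ is injective on $r_z$-balls and, in these coordinates, $\widehat\Om$ is the open epigraph $\{y_n>\psi_z(y')\}$ of a $C^1$ function with $\psi_z(0)=0$ and $\nabla\psi_z(0)=0$; shrinking $r_z$, also arrange $\nu(\xi)\cdot\nu(z)>\tfrac12$ for every $\xi\in\partial\widehat\Om$ in the chart. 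Cover $\partial\widehat\Om$ by finitely many half-size chart balls $B(z_i,r_{z_i}/2)$; take $\chi_i\in C_c^\infty(\T^n)$, $0\le\chi_i\le1$, supported in $B(z_i,r_{z_i})$, with $\sum_i\chi_i\equiv1$ on a neighborhood of $\partial\widehat\Om$; and set $\zeta_0:=\sum_i\chi_i\,\nu(z_i)\in C^\infty(\T^n;\R^n)$. For $z\in\partial\widehat\Om$, each $i$ with $\chi_i(z)>0$ has $z\in B(z_i,r_{z_i})$, hence $\nu(z_i)\cdot\nu(z)>\tfrac12$, so $\zeta_0(z)\cdot\nu(z)>\tfrac12\sum_i\chi_i(z)=\tfrac12$.

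Next I would obtain the uniform $\delta_0$ by compactness. Put $c_0:=\min_{\partial\widehat\Om}\zeta_0\cdot\nu\ (\ge\tfrac12)$ and $M:=\max_{\T^n}|\zeta_0|\ (>0)$. Fix $z\in\partial\widehat\Om$; shrinking the chart radius to some $r'_z$, we may assume $\zeta_0(\cdot)\cdot\nu(z)\ge c_0/2$ and $\sup|\nabla\psi_z|\le c_0/(4M)$ on the chart — the latter using $\nabla\psi_z(0)=0$ — and that for a suitable $\delta_z>0$ segments of length $\le M\delta_z$ issuing from points of the $r'_z$-ball stay inside the ambient $r_z$-chart. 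If $x=(x',x_n)\in\overline{\widehat\Om}$ lies in the $r'_z$-ball, then $x_n\ge\psi_z(x')$; writing $\zeta_0(x)=(a',a_n)$ in this frame we have $a_n\ge c_0/2$, $|a'|\le M$, so for $s\in(0,\delta_z]$,
\[
x_n+s a_n-\psi_z(x'+s a')\ \ge\ s a_n-\sup|\nabla\psi_z|\cdot s|a'|\ \ge\ s\Big(\tfrac{c_0}{2}-\tfrac{c_0}{4M}\cdot M\Big)=\tfrac{c_0}{4}\,s>0,
\]
hence $x+s\zeta_0(x)\in\widehat\Om$. A finite subcover of $\partial\widehat\Om$ by such $r'_z$-balls yields a neighborhood $U$ of $\partial\widehat\Om$ and $\delta_1>0$ with the segment property on $\overline{\widehat\Om}\cap U$. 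Since $\overline{\widehat\Om}\setminus U$ is a compact subset of the open set $\widehat\Om$, it lies at distance $\rho_1>0$ from $\T^n\setminus\widehat\Om$, so for $s\le\rho_1/M$ also $x+s\zeta_0(x)\in\widehat\Om$ there. Then $\delta_0:=\min\{\delta_1,\rho_1/M,1\}$ does the job, and $\zeta:=\delta_0\,(\zeta_0\circ\pi)\big|_{\cO}$ is the required $C^\infty$ map.

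The step I expect to need the most care is the local analysis inside the $C^1$ charts: keeping all radii $r_z,r'_z<\tfrac12$ so that $\pi$ is injective there and the short segments $x+s\zeta_0(x)$ remain within a single chart (so the epigraph description of $\widehat\Om$ is valid along the whole segment), and organizing the constants $c_0$, $M$, $\sup|\nabla\psi_z|$ so that the displayed inequality is uniform over $\partial\widehat\Om$. Once this bookkeeping is in place the estimate itself is elementary; this is the sense in which the argument is similar to \cite[Lemma 4.2]{mitake_asymptotic_2008} with some care in the quotient space. (Under the stronger hypothesis that $\partial\Om$ is $C^2$, all of this is avoided by Lemma~\ref{lem:smoothC2push}, where the signed distance is $C^2$ near $\partial\Om$ and supplies the inward diffeomorphism directly.)
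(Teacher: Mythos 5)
Your proposal is correct and follows essentially the same route as the paper: descend to the torus $\T^n$ to exploit compactness of the boundary, cover $\partial\widehat\Om$ by finitely many $C^1$ boundary charts in which an inward vector can be chosen, glue these with a smooth partition of unity, and verify that the resulting combined vector field still pushes into $\widehat\Om$ for small $s$. The only difference is cosmetic: the paper posits an abstract inward vector $\xi_p$ per chart and concludes by noting that convex combinations of inward directions stay inward for a $C^1$ hypersurface, whereas you pick the inner unit normal $\nu(z_i)$, record the uniform positivity $\zeta_0\cdot\nu>\tfrac12$ on $\partial\widehat\Om$, and make the epigraph estimate with $\nabla\psi_z$ explicit — a more quantitative account of the same final step.
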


\begin{proof}[Proof of Lemma \ref{lem:smoothC1pushGeneral}] Let $\pi:\R^n\to \T^n$ be the quotient map. It is clear that $\pi(-x) = -\pi(x)$ and $\pi(x+y) = \pi(x)+\pi(y)$. For $\tilde{x}\in \T^n$, we denote by
\begin{align*}
    B_{\T^n}(\tilde{x},r) = \{\tilde{y}\in \T^n: |\tilde{x}-\tilde{y}| < r\}, \quad\text{where}\quad |\tilde{x}| = \inf_{z\in \R^n} \{|x-z|\}.
\end{align*}
Equip $\mathbb{T}^n$ with the metric $\delta(\tilde{x},\tilde{y}) = |\tilde{x}-\tilde{y}|$ so that $\mathbb{T}^n$ is a complete metric space.
For $\xi\in \R^n$, the operation $\tilde{x} + \xi \in \T^n$ as $\tilde{x} + \xi = \pi(x+\xi)$ for any $x\in \R^n$ with $\pi(x)=\tilde{x}$ is well-defined. Implicitly here we can write (locally) $\xi = \pi(\xi)$, by abused of notation instead of $\pi(\xi) = \xi + \Z^n$. If $\Omega \subset\R^n$ is connected then it is path connected, hence $\widetilde{\Omega}=\pi(\Omega)\subset\T^n$ is a path-connected open subset of $\T^n$. Let $\Sigma = \pi(\partial\Omega) \subset \T^n$. Then by property of the quotient space, $\Sigma = \partial\widetilde{\Omega}$ and is a compact $C^1$ hypersurface in $\T^n$. \medskip

Let $\tilde{p}\in \Sigma$. We choose $p\in \partial\Omega$ such that $\pi(p) = \tilde{p}$. Since $\partial\Omega$ is $C^1$, we can find $\xi_p\in \R^n$, $r_p>0$, and open neighborhoods $W_p\subset\subset U_p$ such that
\begin{align}\label{eq:pusha}
    y + s\xi_p \in U_p\cap \Omega \qquad\text{for all}\; y\in W_p\cap \overline{\Omega}\;\text{and}\; s\in (0,r_p].
\end{align}
We can choose $U_p\subset B(p,\frac{1}{4})$ without loss of generality by shrinking the size of the neighborhood. This ensures that the inverse map $\pi^{-1}: \tilde{y}\in \pi(U_p)\to U_p$ is uniquely defined. Let $\widetilde{U}_p = \pi(U_p)$ and $\widetilde{W}_p = \pi(W_p)$. For each $\tilde{y}\in \widetilde{W}_p\cap \overline{\pi(\Omega)}$, there is a unique $y\in W_p\cap \overline{\Omega}$ such that $\pi(y) = \tilde{y}$. From \eqref{eq:pusha} we have $y+s\xi_p \in U_p\cap \Omega$ for all $s\in (0,r_p]$. Therefore
\begin{align}\label{eq:pushb}
    \tilde{y} + s\xi_p  = \pi(y+s\xi_p)\in \widetilde{U}_p\cap \widetilde{\Omega}\qquad \text{for all}\; \tilde{y}\in \widetilde{W}_p\cap \overline{\pi(\Omega)} \;\text{and}\; s\in (0,r_p].
\end{align}
Since $\Sigma$ is a compact subset of $\T^n$, there exist finitely many points $\{\tilde{p}_i : i=1,2,\ldots, N\} \subset \Sigma$, together with open neighborhoods $\widetilde{W}_i \subset\subset  \widetilde{U}_i \subset \mathbb{T}^n$, a vector $\xi_i \in \mathbb{R}^n$, and a radius $r_i > 0$ such that $\Sigma = \bigcup_{i=1}^N \widetilde{W}_i$ and for each $i=1,2,\ldots, N$,
\begin{align*}
    \tilde{y} + s \xi_i \in \widetilde{U}_i \cap \pi(\Omega) 
\quad \text{for all}\; \tilde{y} \in \widetilde{W}_i \cap \overline{\pi(\Omega)} 
\;\text{and}\; s \in (0,r_i].    
\end{align*}
Without loss of generality, we can assume $|\xi_i|\leq 1$ for all $i=1,2,\ldots, N$, and by shrinking the neighborhoods if necessary, we may assume that $\pi(\Omega)\setminus \bigcup_{i=1}^N \widetilde{U}_i \neq \emptyset$. 
We then define
\begin{align*}
    K_0 = \pi(\Omega)\setminus \bigcup_{i=1}^N \widetilde{W}_i \neq \emptyset. 
\end{align*}
As $K_0\cap \Sigma  =\emptyset$, we can pick an open set $W_0$ such that $K\subset W_0\subset \overline{W}_0 \subset \pi(\Omega)\backslash \Sigma$. 
The compact space $\overline{\pi(\Omega)}$ has an open cover 
\begin{equation*}
    \overline{\pi(\Omega)}\subset W_0\cup \bigcup_{i=1}^N \widetilde{W}_i, 
\end{equation*}
thus we can find a smooth partition of unity $\{\zeta_0,\zeta_1,\ldots, \zeta_N\}\subset C^\infty(\overline{\pi(\Omega)})$ such that $\mathrm{supp}(\zeta_i)\subset \widetilde{W}_i$ for $i=1,2,\ldots, N$, $\mathrm{supp}(\zeta_0)\subset W_0$ and $\zeta_0 \equiv 1$ on $K_0$ such that
\begin{align*}
    0\leq \zeta_i\leq 1 \qquad\text{and}\qquad \sum_{i=1}^N \zeta_i(\tilde{x}) = 1\qquad\text{for}\; i=1,2,\ldots, N \;\text{and}\;\tilde{x}\in \overline{\pi(\Omega)}. 
\end{align*}
Let $r_0=\min\{r_1,\ldots, r_N\}$. We then define
\begin{align*}
    \zeta(\tilde{x}) = \frac{r_0}{2}\sum_{i=1}^N \zeta_i(\tilde{x})\xi_i, \qquad \tilde{x}\in \overline{\pi(\Omega)}.
\end{align*}
We have $\zeta \in C^\infty(\overline{\pi(\Omega)})$ with $|\zeta(\tilde{x})| \leq \tfrac{1}{2} r_0$ for all $\tilde{x} \in \overline{\pi(\Omega)}$. For any $\tilde{x} \in \overline{\pi(\Omega)}$,  
if $\tilde{x} \in W_0$, then $\tilde{x} + t \zeta(\tilde{x}) = \tilde{x} \in \pi(\Omega)$.
Otherwise, assume
\begin{equation*}
    \tilde{x} \in \widetilde{W}_j \qquad \text{for}\; j \in \{i_1,\ldots,i_k\} \subset \{1,2,\ldots,N\}.
\end{equation*}
Since $\widetilde{W}_i$ are open for $i=1,2,\ldots, k$, we can find $t_0$ small such that 
\begin{align*}
    \tilde{x} \in \widetilde{W}_i \qquad\Longrightarrow\qquad \tilde{w}+t\xi \in \widetilde{W}_i \qquad\text{for all}\; i=1,2,\ldots, n, |\xi|\leq \frac{r_0}{2}, t\in [0,t_0]. 
\end{align*}
With $t\in (0,t_0]$ and by \eqref{eq:pushb} we have
\begin{align*}
    \tilde{x} \in \widetilde{W}_{i_1}\cap \overline{\pi(\Omega)}&\qquad\Longrightarrow\qquad\tilde{x} + t\left(\frac{r_0} {2}\zeta_{i_1}(\tilde{x})\xi_{i_1}\right) \in \Omega\cap \widetilde{W}_{i_2}\\
    \tilde{x} \in \widetilde{W}_{i_2}\cap \overline{\pi(\Omega)}&\qquad\Longrightarrow\qquad\tilde{x} + t\left(\frac{r_0} {2}\zeta_{i_1}(\tilde{x})\xi_{i_1} + \frac{r_0}{2}\zeta_{i_2}(\tilde{x})\xi_{i_2}\right) \in \Omega\cap \widetilde{W}_{i_3}.
\end{align*}
By an induction argument we obtain $\tilde{x} + t\zeta(\tilde{x})\in \pi(\Omega)$.
Alternatively, one may note that any convex combination of inward directions is still inward, since locally $\Sigma$ is $C^1$. Finally, we extend $\zeta$ to $\overline{\Omega} \to \mathbb{R}^n$ in the natural periodic way, 
and include a scaling factor to ensure that $x + \delta \, \zeta(x) \in \Omega$ for all $\delta\in (0,1]$.
\end{proof}



\begin{proof}[Proof of Lemma \ref{lem:acdiff}] The argument combines Lemma~\ref{lem:smoothC1pushGeneral} (or Lemma~\ref{lem:smoothC2push} for a simpler proof in the $C^2$ case) with the method of \cite[Proposition~4.1]{mitake_asymptotic_2008}, which in turn builds on \cite[Proposition~2.4]{ishii_asymptotic_2008}.
\end{proof}





\section*{Acknowledgments} The authors thank Hung Vinh Tran for suggesting the problem and Hiroyoshi Mitake for insightful discussions on his previous work on the Dirichlet problem.



\bibliography{refs.bib}
\bibliographystyle{acm}
\end{document}